\newtheorem{theorem}{Theorem}[section]
\newtheorem{corollary}[theorem]{Corollary}
\newtheorem{proposition}[theorem]{Proposition}
\newtheorem{problem}[theorem]{Problem}
\theoremstyle{definition}
\newtheorem{definition}[theorem]{Definition}
\DeclareMathOperator*{\argmax}{arg\!max}
\title[Positional Games]{Positional Games}
\author[Michael Krivelevich]
{Michael Krivelevich}
\begin{document}

\begin{abstract}
Positional games are a branch of combinatorics, researching a variety of two-player games, ranging from popular recreational games such as Tic-Tac-Toe and Hex, to purely abstract games played on graphs and hypergraphs. It is closely connected to many other combinatorial disciplines such as Ramsey theory, extremal graph and set theory, probabilistic combinatorics, and to computer science. We survey the basic notions of the field, its approaches and tools, as well as numerous recent advances, standing open problems and promising research directions.
\end{abstract}

\begin{classification}
Primary 05C57, 91A46; Secondary 05C80, 05D05, 05D10, 05D40.
\end{classification}

\begin{keywords}
Positional games, Ramsey theory, extremal set theory, probabilistic intuition.
\end{keywords}

\maketitle

\section{Introductory words}
Positional games are a combinatorial discipline, whose basic aim is to provide a mathematical foundation for analysis of two-player games, ranging from popular recreational games such as Tic-Tac-Toe and Hex to purely abstract games played on graphs and hypergraphs. Though the field has been in existence for several decades, motivated partly by its recreational side, it advanced tremendously in the last few years, maturing into one of the central branches of modern combinatorics. It has been enjoying mutual and fruitful interconnections with other combinatorial disciplines such as Ramsey theory, extremal graph and set theory, probabilistic combinatorics, as well as theoretical computer science.

The aim of this survey is two-fold. It is meant to provide a brief, yet gentle, introduction to the subject to those with genuine interest and basic knowledge in combinatorics. At the same time, we cover recent progress in the field, as well as its standing challenges and open problems. We also put a special emphasis on connections between positional games and other branches of combinatorics, in particular discussing the very surprising ubiquitous role of probabilistic intuition and considerations in the analysis of (entirely deterministic) positional games.

Due to obvious space limitations we will frequently be rather brief, omitting many of the proofs or merely indicating their outlines. More details, examples and discussions can be found in research monographs and papers on the subject.

\section{Basic setting and examples}\label{sec1}
{\em Positional games} involve two players alternately claiming unoccupied elements of a set $X$, the {\em board} of the game; the elements of $X$ are called vertices. Usually $X$ is assumed to be finite, although of course there are exciting infinite games to analyze. The focus of players' attention is a given family ${\cal H}=\{A_1,\ldots,A_k\}$ of subsets of $X$, called the {\em hypergraph of the game}; sometimes the members of ${\cal H}$ are referred to as  the {\em winning sets} of the game. In the most general version there are two additional parameters --- positive integers $p$ and $q$: the first player claims $p$ unoccupied elements in each turn, the second player answers by claiming $q$ vertices. (If in the very end of the game there are less unclaimed elements to claim than as prescribed by the turn of the current player, that player claims all remaining elements.) The parameters $p$ and $q$ define the {\em bias} of the game. The most basic case $p=q=1$ is the so called {\em unbiased} game. The game is specified completely by setting its outcome (first player's win/second player's win/draw) for every final position of the game, or more generally for every possible game scenario (an alternating sequence of legal moves of both players). For every game scenario there is only one possible outcome. Of course, the above definition is utterly incomplete and hence fairly vague. However, the accumulated research experience has shown that this is the right setting for the field. Depending on concrete game rules we get several game types, some of which are discussed later. For now let us state, using the standard game theory terminology, that positional games are two-player perfect information zero sum games with no chance moves.

Now we  illustrate the above general setting by providing several examples.

\noindent{\bf Example: Tic-Tac-Toe.} This is of course the first game that should come to anyone's mind. In our terminology, the board of the game $X$ is the 3-by-3 square. Two players, sometimes called Crosses and Noughts, claim in their turns one unoccupied element of the board each. The winning sets are three horizonal lines, three vertical lines, and two diagonals, all of size three; thus the game hypergraph ${\cal H}$ has eight sets of size three each, and is thus 3-uniform. (A hypergraph is called {\em $r$-uniform} if all its edges are of size $r$). The player completing a winning set first wins; if none of the lines is claimed by either player in the end, the game is declared a draw. Assuming optimal strategies of both players, the game is a draw,  as everybody knows; here the case analysis is essentially the only way to prove it.

\noindent{\bf Example: $n^d$.} This is a natural, yet extremely far reaching and challenging generalization of the classical Tic-Tac-Toe game. Given positive integers $d$ and $n$, the board $X$ of $n^d$ is the $d$-dimensional cube $X=[n]^d$, and the winning sets are the so-called {\em geometric lines} in $X$.
A geometric line $l$ is a family of $n$ distinct points $\left({\bf a}^{(1)},{\bf a}^{(2)},\ldots,{\bf a}^{(n)}\right)$ of $X$, where ${\bf a}^{(i)}=(a_1^{(i)},\ldots,a_d^{(i)})$, such that for each  $1\le j\le d$ the sequence of corresponding coordinates $(a_j^{(1)},a_j^{(2)},\ldots,a_j^{(n)})$ is either $(1,2,\ldots,n)$,  or $(n,n-1,\ldots,1)$, or constant (and of course at least one of the coordinates should be non-constant).
The winner is the player who occupies a whole line first, otherwise the game ends in a draw. The familiar Tic-Tac-Toe is $3^2$ in this notation. Another recreationally known instance of this game is  $4^3$, marketed by Parker Brothers under the name of Qubic.

This is a very complicated game, and resolving it for all pairs $(d,n)$ appears to be well out of reach. We can do it for the fairly simple two-dimensional case $d=2$ (first player's win for $n=1,2$, and a draw for $n\ge 3$), and also for the two extremes: $n$ large compared to $d$, and $d$ large compared to $n$. For the former case, we have the following:
\begin{theorem}\label{nd-draw}
If $n\ge 3^d-1$, then the $n^d$ game is a draw.
\end{theorem}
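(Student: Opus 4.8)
The plan is to prove this statement by exhibiting a \emph{pairing strategy}. Such a strategy rests on a family ${\cal P}$ of pairwise disjoint two-element subsets of the board $X=[n]^d$ with the property that every geometric line fully contains some member of ${\cal P}$. A player following this strategy answers each opponent move landing in a pair of ${\cal P}$ by immediately claiming the other element of that pair (and plays arbitrarily otherwise); this guarantees that the opponent never owns both cells of any pair of ${\cal P}$, and hence never occupies an entire line. Since such a strategy can be used by either player -- a player moving first simply wastes the opening move and then follows the scheme -- a single family ${\cal P}$ of this kind already shows that neither player can complete a line, i.e., the game is a draw. (Alternatively, it suffices to let the second player use the pairing strategy: then the game is not a first-player win, and by the classical strategy-stealing argument it is not a second-player win either, hence a draw.)

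It remains to construct such a family ${\cal P}$ when $n \ge 3^d-1$, and this is the combinatorial heart of the matter. First I would record, for each geometric line $\ell$, its \emph{direction} $v(\ell)\in\{-1,0,1\}^d\setminus\{{\bf 0}\}$, whose $j$-th entry is $+1$, $-1$ or $0$ according to whether the $j$-th coordinate increases, decreases, or stays constant along $\ell$; reversing the order of the points of $\ell$ replaces $v(\ell)$ by $-v(\ell)$. There are exactly $3^d-1$ directions, and the hypothesis $n\ge 3^d-1$ lets us fix an injection $\pi$ from the set of directions into the position set $[n]$. Fixing also, once and for all, an orientation $P_1,\ldots,P_n$ of each line $\ell$, the pair that ${\cal P}$ assigns to $\ell$ will be $\{P_{\pi(v(\ell))},\,P_{\pi(-v(\ell))}\}$ (these two points are distinct since $\pi$ is injective and $v(\ell)\neq-v(\ell)$). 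The bulk of the work -- and the main obstacle -- is to choose $\pi$ so that all these pairs are genuinely disjoint across the $\tfrac12\big((n+2)^d-n^d\big)$ lines at once: the goal is that a point of a line which occupies a reserved position should determine the direction of that line and then, through the values of its constant coordinates, the line itself, so that two pairs can overlap only in a common line. A convenient device for arranging this, which I would try to push through, is to split $[n]$ into a long central block flanked by two outer blocks and to read the direction off the pattern of blocks into which the reserved point falls; the inequality $n\ge 3^d-1$ is precisely what makes the central block long enough to accommodate one reserved position per direction. This is the Hales--Jewett pairing, and the careful bookkeeping it requires is where an honest write-up spends most of its effort; I would expect the small cases ($d=1$, and perhaps $d=2$) to be easier to handle directly.

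One can also bypass the pairing construction altogether via the Erd\H{o}s--Selfridge potential-function criterion. If the second player plays the associated blocking strategy in the Maker--Breaker version of the game, the first player fails to occupy a whole line as soon as $\sum_{\ell}2^{-|\ell|}=\tfrac12\big((n+2)^d-n^d\big)\,2^{-n}<\tfrac12$, that is, as soon as $(n+2)^d-n^d<2^n$; a short estimate confirms this for every $n\ge 3^d-1$, the tiny case $d=1$ being checked by hand. Together with strategy stealing this again shows the game is a draw. I would make the pairing proof the main line, as it is self-contained and yields the bound $3^d-1$ cleanly, and mention the Erd\H{o}s--Selfridge route as a lighter alternative.
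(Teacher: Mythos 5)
Your opening paragraph (a family of pairwise disjoint pairs, one contained in each line, gives both players drawing strategies) is fine and matches the paper's framework. The gap is exactly at what you yourself call the combinatorial heart: the existence of the disjoint pairs, which you do not establish. The injection-from-directions scheme, as described, does not make the pairs disjoint, and the obstruction is not mere bookkeeping. Concretely, take $d=2$ and let $\pi((1,0))=a$, $\pi((0,1))=b$: the horizontal line whose constant coordinate equals $b$ reserves the point $(a,b)$ as its $a$-th point, while the vertical line whose constant coordinate equals $a$ reserves the same point $(a,b)$ as its $b$-th point, so pairs of lines with \emph{different} directions collide. Your proposed block device suffers from the same ambiguity, since the constant coordinates of a line are unconstrained and can land inside reserved blocks; an explicit construction along these lines is genuinely delicate, and you leave it unfinished. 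The paper avoids any explicit construction: it bounds the maximum degree of the line hypergraph --- at most $(3^d-1)/2$ lines through a point for odd $n$ and at most $2^d-1$ for even $n$ --- and then, since each line has $n\ge 3^d-1\ge 2\Delta$ points, Hall's theorem (each line requesting two private points, each point usable by at most one line) immediately yields the disjoint pairs. As written, your main line is therefore not a proof.

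Your fallback route, on the other hand, is a complete and genuinely different argument, and differs from the paper's as well; I would either promote it or switch to the degree-plus-Hall argument. The number of lines is indeed $\tfrac12\bigl((n+2)^d-n^d\bigr)$, each of size $n$, so the Erd\H{o}s--Selfridge criterion gives Second Player a strategy blocking every line as soon as $(n+2)^d-n^d<2^n$. This inequality does hold for all $n\ge 3^d-1$, but you should write the estimate out rather than assert it: for instance $(n+2)^d-n^d\le 2d(n+2)^{d-1}$, whose base-$2$ logarithm at $n=3^d-1$ is of order $d^2$, far below $3^d-1$, and as $n$ grows by one the left side less than doubles while the right side doubles, so the inequality propagates upward. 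Blocking every line certainly prevents First Player from ever completing one, and combined with strategy stealing (or simply by letting First Player also play a Breaker strategy, which the paper notes is legitimate) both players can force a draw. The trade-off against the paper's proof: your route needs the potential-function theorem and a calculation but only the count of lines, whereas the Hall/degree argument is elementary and local, and is exactly what the paper means by a pairing strategy.
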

For the proof, one can argue that the maximum vertex degree in the game hypergraph of $n^d$ is $(3^d-1)/2$ for odd $n$  and is $2^d-1$ for even $n$; then a straightforward application of Hall's theorem shows that one can assign to every geometric line $l$ a pair of points $B(l)\subset l$ so that the assigned pairs are disjoint. Given such an assignment, either player can guarantee that he will not lose by claiming at least one element from each pair. This is a good example of the so called {\em pairing strategy} argument, frequently used to show the draw outcome in games of this type.

The case of fixed $n$ and large $d$ is first player's win. This is the famous Hales-Jewett theorem, and we will discuss it soon.

\noindent{\bf Example: Sim.} This too is a well known recreational game, whose mathematical description is as follows. The board of the game is the edge set of the complete graph $K_6$ on six vertices. Two players claim alternately one unoccupied edge of the board each, and the player completing a triangle of his edges first actually loses. Thus the game hypergraph ${\cal H}$ consists of the edge sets of all triangles in $K_6$, so ${\mathcal H}$ has $\binom{6}{3}=15$ sets of size 3 each. This is a reverse, or mis\'ere-type, game, the player completing a winning set first is the one to lose. Due to the standard fact $R(3,3)=6$, where $R(k,l)$ is the Ramsey number, the game cannot end in a draw. Sim has been solved by a computer and was proven to be a second's player win, with a complicated winning strategy.

\noindent{\bf Example: Hex.} This game was invented by the Danish
scientist Piet Hein in 1942 and was played and researched by John Nash in his student years. It is played on a
rhombus of hexagons of size $n\times n$ (in recreational
versions $n$ is usually 11), where two players, say, Blue and Red,
take the two opposite sides of the board each, and then alternately
mark unoccupied hexagons of the board with their own color. Whoever
connects his own opposite sides of the board first wins the game.

There is a catch here --- the game as described above does not fit our general scheme, as the players' winning sets are different. The problem can be cured by proving  that a player wins in Hex if and only if he prevents his opponent from winning, by blocking his winning ways. This is the so-called Hex theorem, known to be equivalent to the Brouwer fixed point theorem, see \cite{Gale79}. This allows to cast Hex in our general framework, by defining the winning sets to be the connecting sets of hexagons for the first player and declaring him the winner if he occupies one of them fully in the end; the second player's goal is redefined by assigning him instead the task of preventing the first player from claiming an entire winning set.
%

{\bf Example: Connectivity game.} The game
is played on the edge set of a multigraph $G$. The players, called Connector
and Disconnector, take turns in claiming one unoccupied edge of $G$
each,  with Disconnector moving first.
Connector wins the game if in the end the set of his edges contains a spanning tree of
$G$, Disconnector wins if he manages to leave
Connector with a non-connected graph.
Observe the highly non-symmetric goals of the players here. In our setting, the board is the set of edges of $G$, the winning sets are the edge sets of spanning trees in $G$; Connector wins the game if in the end he claims fully one of the winning sets, Disconnector wins otherwise. This game was treated by
Lehman, who proved:

\begin{theorem}[\cite{Leh64}]\label{Lehman_th}
If a multigraph $G$ has two edge-disjoint spanning trees, then
Connector wins the connectivity game played on $G$.
\end{theorem}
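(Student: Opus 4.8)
The plan is to exhibit an explicit strategy for Connector and to verify that it preserves an invariant throughout the game. Write $C$ and $D$ for the sets of edges claimed so far by Connector and Disconnector, respectively. The invariant Connector maintains is: \emph{immediately after each of his moves, the multigraph $G/C$ obtained by contracting all of Connector's edges still contains two edge-disjoint spanning trees, neither of which uses an edge of $D$} --- equivalently, there are spanning trees $T_1,T_2$ of $G$ with $T_1\cap T_2=C$ and $(T_1\cup T_2)\cap D=\emptyset$. Before Disconnector's opening move the invariant holds trivially, with $C=D=\emptyset$ and $T_1,T_2$ the two given edge-disjoint spanning trees of $G$.

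For the inductive step, assume the invariant holds and let $H=G/C$ carry edge-disjoint spanning trees $T_1,T_2$ avoiding $D$; let Disconnector now claim an unoccupied edge $e$. If $e\notin T_1\cup T_2$, Connector claims an arbitrary edge $f$ of $T_1$ (such an $f$ exists unless $C$ already contains a spanning tree, in which case Connector has won already and may play arbitrarily). Contracting $f$ leaves $T_1$ a spanning tree, while $T_2$ acquires exactly one cycle, the fundamental cycle of $f$ with respect to $T_2$; deleting any one edge of that cycle from $T_2$ restores a spanning tree, and the two resulting trees are edge-disjoint and still avoid $D\cup\{e\}$, so the invariant holds for the pair $C\cup\{f\}$, $D\cup\{e\}$. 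If instead $e$ lies in one of the trees --- say $e\in T_1$, which rules out $e\in T_2$ since the trees are edge-disjoint --- then deleting $e$ splits $V(H)$ into the two sides of a cut, and, $T_2$ being spanning, it has an edge $f$ crossing this cut; Connector claims that $f$. Now $T_1-e+f$ is again a spanning tree of $H$, and it meets $T_2$ in precisely the edge $f$, so contracting $f$ turns this pair into two edge-disjoint spanning trees of $G/(C\cup\{f\})$, still avoiding $D\cup\{e\}$. In either case the edge $f$ that Connector claimed was genuinely available --- it lies in a spanning tree of $G/C$, hence belongs to neither $C$ nor $D$, and it differs from $e$ --- so the invariant is re-established.

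When the game ends every edge is claimed, and applying the invariant after Connector's last move (or, if Disconnector moved last, after Connector's penultimate move: at that point a single edge remained unclaimed, too few for $G/C$ to carry two edge-disjoint spanning trees unless $G/C$ is already a single vertex) forces $G/C$ to consist of one vertex. Hence Connector's edge set $C$ contains a spanning tree of $G$ and he wins. I expect the inductive step to be the crux: it is precisely a graphic-matroid basis-exchange argument --- fundamental cycles in the first case, a fundamental cut in the second --- and the care is in checking both that the replacement edge is free for Connector and that edge-disjointness survives each contraction. This is, in essence, Lehman's original argument specialized from matroids to graphs.
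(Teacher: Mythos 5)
Your proof is correct and takes essentially the same route as the paper: Lehman's contraction argument (induction on $|V(G)|$, here recast as an invariant on $G/C$), with the identical two-case basis-exchange response --- an edge of $T_2$ across the cut when Disconnector takes a tree edge, and a fundamental-cycle repair when he takes an edge outside $T_1\cup T_2$ --- plus the endgame details the paper leaves implicit. One cosmetic slip: when Disconnector moves last, the relevant moment is after Connector's \emph{last} move (when exactly one edge remains free), which is in fact the situation your parenthetical argument already uses.
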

\noindent The proof is by induction on $|V(G)|$.
For the induction step, assume that $T_1$ and $T_2$ are edge-disjoint spanning trees
of $G$. If Disconnector claims an edge $e$ from, say, $T_1$, this move cuts $T_1$ into two
connected parts. Connector responds by claiming an edge $f$ of $T_2$
connecting these two parts, and then contracts $f$ by identifying
its two endpoints $u$ and $v$.
Then he updates $T_1$ and $T_2$ accordingly and applies induction to $G'$. The case where Disconnector claims an edge outside of
$T_1\cup T_2$ can be treated similarly.

Frequently the board of the game is the edge set of the complete graph $K_n$, and the players take turns in occupying its edges. Here is one such example.

{\bf Example. Hamiltonicity game.} This game is set up as follows. It is played by two players, who take turns claiming unoccupied
edges of  $K_n$, one edge each
turn. The first player wins if by the end of the game he manages to
make a Hamilton cycle (a cycle of length $n$) from his edges, while the
second player wins otherwise, i.e., if in the end he manages to put
his edge, or to break, into every Hamilton cycle. Here, the board is $E(K_n)$, and the winning
sets are $n$-cycles; the first
player wins if he claims an entire winning set by the end of the
game, and the second player wins otherwise.
This game was introduced and analyzed by Chv\'atal and Erd\H{o}s in
their seminal paper \cite{CE78}; it turned out to be an  easy
win for the cycle maker for all large enough $n$.

{\bf Example. Row-column game.} The board of the game is the
$n\times n$ square, and two players alternately claim its elements.
The goal of the first player is to achieve a sizable advantage
in some row or column; the question is how large an advantage he can get playing against a perfect opponent. We can place the game in our general framework as follows. For a
given parameter $k$, if the task is to decide whether the first player
can reach at least $k$ elements in one of the $2n$ lines of the
game, one can define the game hypergraph whose
board consists of  the $n^2$ cells of the square, and whose
winning sets are all $k$-subsets of the rows and columns.

If only rows (or only columns) are taken into account, then a simple pairing
strategy shows that the first player can achieve nothing for even
$n$, or 1 for odd $n$. However, when both rows and columns are
important, the first player can reach something of substance: Beck
proved \cite{Beck94} that he has a strategy to end up
with at least $n/2+32\sqrt{n}$ elements in some line. The upper
bound is due to Sz\'ekely, who showed \cite{Sze84} that the second
player can restrict his opponent to at most
$n/2+O(\sqrt{n\log n})$ elements in each line; the
gap of $c\sqrt{\log n}$ in the error term still stands.

\medskip

There is a crucial difference between casual games, where more experienced or skillful players have better chances to succeed, and formal games we consider here. We assume that both players have unbounded computational power and therefore play perfectly, choosing optimal moves each turn. Under this assumption, each positional game is {\em determined} and has exactly one of the three possible mutually exclusive outcomes: the first player has a winning strategy, the second player has a winning strategy, or both players have drawing strategies. A formal proof of this statement is easy and uses De Morgan's laws. Thus, solving a game means establishing its deterministic outcome out of the set of three possibilities. In order to do it formally, one can employ the fairly natural notion of a {\em game tree}. Given a positional game, its game tree is a rooted directed tree, where each node corresponds to a sequence of legal moves of both players, including the empty sequence
  --- the root of the tree, and there is an incoming edge to every legal sequence of moves from the sequence one move shorter. The leaves are exactly the final positions of the game. In order to find the outcome of the game, one can backtrack the game tree by labeling first its roots by the corresponding  game result, and then for each intermediate node marking it with the best possible move out of this position.
Although this simple procedure resolves every positional game, in reality it is usually extremely impractical, due to the huge size of the game tree. To illustrate this thesis, let us mention that the $4^3$ game, or the Qubic, is known to be first player's win, but according to Oren Patashnik, one of its solvers, the winning strategy fully spelled would be as thick as a phone book. This leaves us --- luckily, in fact --- with the necessity to develop general combinatorial tools for analyzing positional games.

Let us say a couple of words on where it all belongs
mathematically. The term ``positional games" can be somewhat
misleading. Classical game theory is
largely based on the notions of uncertainty and lack of perfect
information, giving rise to probabilistic arguments and the key
concept of a mixed strategy. Positional games, in contrast, are
perfect information games and as such can in principle be solved
completely by an all-powerful computer and hence are categorized as trivial in classical game theory.
This is not the case of course, due to the prohibitive complexity of the exhaustive search
approach; this only stresses the importance of accessible
mathematical criteria for analyzing such games. A probably closer
relative is what is sometimes called ``combinatorial game theory",
popularized by Conway and others, which includes such games as
Nim; they  are frequently based on algebraic arguments and
notions of decomposition. Positional games are usually quite
different and call for combinatorial arguments of various sorts.

Now we wish to mention a few papers and researchers who were influential in the development of the field. Of course, the choice below is rather subjective. The paper of Hales and Jewett \cite{HJ63} has established a tight connection between positional games and Ramsey theory. As is the case with many combinatorial disciplines, Paul Erd\H{o}s was a  pioneer here. First he wrote the paper \cite{ES73} with Selfridge, where potential functions were used for game analysis, at the same time providing the first derandomization argument,  a crucial notion in the theory of algorithms. Then Chv\'atal and Erd\H{o}s \cite{CE78} introduced biased games. J\'ozsef Beck shaped the field for the last three decades with his many papers that were instrumental in turning positional games into a coherent combinatorial discipline.

We complete this introductory section with suggestions for further reading. The fundamental monograph \cite{Beckbook} of  Beck  covers many facets of positional games. His more recent book \cite{Beckbook2} contains a lot of material on games too. The new book \cite{HKSSbook} can serve as a gentle introduction to the subject, at the same time covering  recent important developments.

\section{Strong games}
Strong games are probably the most natural, at least from the layman perspective, type of games. A {\em strong game} is played on a game hypergraph $(X,{\mathcal H})$ by two players, called First Player or {\tt FP}, and Second Player, or {\tt SP}, who take turns in occupying previously unclaimed elements of $X$, one element each time; First Player starts. The winner is the {\em first} player to occupy completely a winning set $A\in {\mathcal H}$; if this has not happened till the end of the game, it is declared a draw. Tic-Tac-Toe and $n^d$ are games of this type.

The childhood intuition suggesting it is beneficial to be the player to move first has a solid mathematical basis:

\begin{theorem}\label{SS}
In a strong game played on $(X,{\mathcal H})$, First Player can
guarantee at least a draw.
\end{theorem}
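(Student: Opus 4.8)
The plan is to run the classical \emph{strategy-stealing} argument. Suppose, for contradiction, that Second Player has a winning strategy $S$. I will describe a strategy for First Player that also wins, which is impossible: as noted above, every positional game is determined and has exactly one of the three mutually exclusive outcomes, so at most one player can have a winning strategy.

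First Player's ``stolen'' strategy works as follows. He opens with an arbitrary move, claiming some vertex $x_1$. From then on he runs a \emph{shadow game} in his head in which the roles are reversed: the vertices $y_1, y_2, \ldots$ claimed by the real Second Player are fed to $S$ as if they were First Player's moves in the shadow game, and First Player plays back the responses prescribed by $S$. If $S$ ever tells him to claim a vertex he already owns (for instance $x_1$, or an earlier ``extra'' vertex), he instead claims a fresh arbitrary unclaimed vertex and continues the simulation. In this way one maintains the invariant that, after each of First Player's moves, the set of vertices claimed by First Player \emph{contains} the set claimed by the shadow $S$-player, while the set claimed by Second Player is \emph{exactly} the shadow opponent's set $\{y_1,\ldots,y_j\}$; in particular $S$ is never asked to claim a vertex already taken by the shadow opponent, so the simulation is always legal. (If at some point no fresh vertex is available for an ``extra'' move, the board is full and the game is over.)

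Now invoke the \emph{monotonicity} of strong games: since a player wins precisely by being the first to occupy some $A\in{\mathcal H}$ entirely, owning a superset of vertices can only make a player complete winning sets sooner. Because $S$ is a winning strategy for the second player in the shadow game, the shadow $S$-player occupies some winning set before the shadow opponent does; hence, in the real game, First Player occupies that same winning set no later, while Second Player --- whose position is identical to the shadow opponent's --- has not occupied any winning set before that moment. Thus First Player wins the real game, contradicting determinacy. Therefore Second Player has no winning strategy, and since the game is determined, First Player has either a winning or a drawing strategy; in either case he can guarantee at least a draw.

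The only genuinely delicate point is the bookkeeping behind the shadow game: one must check that the ``superset versus exact'' invariant really is preserved through every round, including the rounds where $S$ prescribes an already-owned vertex, and that the first-to-complete timing comparison still goes through in the presence of First Player's extra vertices (here the monotonicity observation does the work). One should also note that the argument is non-constructive in the sense that it does not reveal whether the true outcome is a First Player win or a draw --- it only rules out a Second Player win.
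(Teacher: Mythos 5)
Your proof is correct and follows essentially the same route as the paper: the strategy-stealing argument of Nash, in which First Player makes an arbitrary opening move, simulates Second Player's hypothetical winning strategy while treating extra vertices as harmless surplus, and derives a contradiction with the fact that at most one player can have a winning strategy. Your extra care with the shadow-game invariant and the timing comparison only makes explicit what the paper leaves implicit.
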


\begin{proof} The proof applies the so-called {\em strategy stealing principle}, observed by Nash. Assume to the contrary that Second Player  has a winning
strategy ${\cal S}$. The strategy is a complete recipe, prescribing
{\tt SP} how to respond to each move of his opponent, and to
reach a win eventually. Now, First Player ``steals" ${\cal S}$ and adopts it as follows. He starts with an
arbitrary move and then pretends to be Second Player, by ignoring
his first move. After each move of {\tt SP}, {\tt FP} consults ${\cal S}$ and responds accordingly. If he is told to
claim an element of $X$ which is still free, he does so;
if this element has been taken by him as his previously ignored
arbitrary move, he takes another arbitrary move instead. Observe that an extra move can only benefit
First Player. Since ${\cal S}$ is a winning strategy, at some point  {\tt FP} claims fully a winning set, even ignoring his
extra move, before {\tt SP} was able to do so. It follows that First Player has a winning strategy, excluding the possibility that Second Player has a winning strategy and thus providing the desired contradiction.
\end{proof}
Thus, in any strong game there are only two possible outcomes: First Player's win or a draw. (Both of them happen indeed for particular games.) This is perhaps the single most powerful result in positional games --- it is valid for every strong game! At the same time, it is rather useless, since due to the inexplicit nature of the proof it provides no concrete directions for {\tt FP} to reach at least a draw.

There are many games for which draw is impossible. This is usually a Ramsey-type statement: if for any two-coloring of $X$ (the colors are the moves of corresponding players) there is a monochromatic winning set $A\in{\mathcal H}$, then there is no final drawing position. We have:
\begin{corollary}\label{strong-Ramsey}
If in a strong game played on $(X,{\mathcal F})$ there is no final
drawing position, then First Player has a winning strategy.
\end{corollary}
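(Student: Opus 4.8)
The plan is to derive this immediately from Theorem~\ref{SS} together with the hypothesis that no drawing final position exists. First I would recall the set-up: a strong game on $(X,{\mathcal F})$ has exactly three mutually exclusive outcomes under optimal play --- a First Player win, a Second Player win, or a draw --- and a strategy ``guaranteeing at least a draw'' for {\tt FP} is, by definition, a strategy under which every resulting play of the game ends either in a First Player win or in a draw, and never in a Second Player win.

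Next I would invoke Theorem~\ref{SS} to fix such a drawing strategy ${\cal S}$ for {\tt FP}. The key observation is then the following. Since $X$ is finite, every play of the game terminates, either because some player has just completed a winning set $A\in{\mathcal F}$ or because all of $X$ has been claimed with no winning set having been completed earlier. In the latter case the partition of $X$ into the two players' elements is a $2$-colouring of $X$, and the assumption that there is no final drawing position means precisely that in any such colouring some $A\in{\mathcal F}$ is monochromatic --- so in fact some player does own a full winning set at the end. Hence, no matter how the two players act, every play of the game ends with one of the players having completed a winning set; in particular the draw outcome never occurs.

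Finally I would combine the two halves. Playing according to ${\cal S}$, First Player never loses, so every play consistent with ${\cal S}$ ends in a First Player win or in a draw; but the previous step rules out the draw; therefore every play consistent with ${\cal S}$ is a First Player win, i.e.\ ${\cal S}$ is a winning strategy for {\tt FP}. I do not expect a genuine obstacle here, since this is a short corollary of Theorem~\ref{SS}; the only point deserving a line of care is the bookkeeping that the Ramsey-flavoured hypothesis (``no final drawing position'') really does coincide with the game-theoretic statement that the draw outcome is impossible --- that is, that a strong game on a finite board cannot end in a draw for any reason other than a full board carrying no monochromatic winning set.
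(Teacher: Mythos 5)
Your argument is correct and is exactly the intended one: the paper derives this corollary directly from Theorem~\ref{SS} (strategy stealing gives {\tt FP} a strategy guaranteeing at least a draw), and since no final drawing position exists, the draw outcome is impossible, so that strategy is in fact winning. Your extra care about identifying ``no final drawing position'' with ``the draw outcome cannot occur'' is a reasonable bit of bookkeeping but matches the paper's reading of the hypothesis, so there is nothing genuinely different here.
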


For example, we conclude that the clique game $(K_n,K_q)$ (the board is the edge set of the complete graph $K_n$, the player completing a clique $K_q$ first wins) is First Player's win for $n\ge R(q,q)$, since by the definition of Ramsey numbers, there is no final drawing position here. Another example is the game of Hex;  Nash, in a pioneering application of the strategy stealing principle, observed that due to the board symmetry strategy stealing is applicable, and the first player wins the game. Still, no clue as for how exactly the first player should play to win in these cases...

The most inspiring instance of application of this pair (Strategy Stealing, Ramsey) is probably for the $n^d$ game. Recall that we stated that this game is a draw for $n$ large enough compared to $d$ (Theorem \ref{nd-draw}). In the opposite direction, Hales and Jewett, in one of the cornerstone papers of modern Ramsey theory \cite{HJ63}, proved:
\begin{theorem}\label{HJ-thm}
For every $k$ and $n$ there exists $d_0=d_0(k,n)$ such that for every $d\ge d_0$ every $k$-coloring of $X=[n]^d$ contains a monochromatic geometric line.
\end{theorem}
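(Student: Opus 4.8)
The plan is to deduce Theorem~\ref{HJ-thm} from the stronger \emph{combinatorial} Hales--Jewett theorem, proved by induction on the alphabet size $n$. Call a line in $X=[n]^d$ \emph{combinatorial} if there is a nonempty ``wildcard'' set $W\subseteq[d]$ and a fixed choice of the coordinates off $W$ such that the $j$-th point of the line has all coordinates in $W$ equal to $j$; such a line is in particular a geometric line in the sense of the paper (its non-constant coordinates run through $1,2,\ldots,n$). So it suffices to find, for all $n,k$, a bound $d_0(n,k)$ such that every $k$-coloring of $[n]^d$ with $d\ge d_0(n,k)$ admits a monochromatic combinatorial line. The base case $n=2$ is the pigeonhole principle: among the $d+1$ points of $\{1,2\}^d$ that consist of a block of $1$'s followed by a block of $2$'s, two share a color once $d\ge k$, and the coordinates on which those two differ form the wildcard set of a monochromatic combinatorial line.

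For the inductive step (Shelah's argument), assume the result for alphabet $n-1$ and every number of colors, and fix $k$. I would split the $d$ coordinates into $t=k+1$ consecutive blocks $I_1,\ldots,I_t$ and handle them one at a time, in the order $i=1,2,\ldots,t$. When block $I_i$ is reached, the earlier blocks $I_1,\ldots,I_{i-1}$ have already been reduced to combinatorial lines, so each is described by one of $n$ ``levels'', while the later blocks $I_{i+1},\ldots,I_t$ are still unrestricted. Recolor a point $y\in\{1,\ldots,n-1\}^{I_i}\subseteq[n]^{I_i}$ by its \emph{color profile}: the function that, to each assignment of levels to the earlier blocks and of arbitrary values to the later blocks, returns the original color of the resulting point of $[n]^d$. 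This is a coloring of $[n-1]^{|I_i|}$ by a large but finite palette, so if $|I_i|$ is at least the Hales--Jewett number for alphabet $n-1$ and that palette, the inductive hypothesis yields a combinatorial line inside $\{1,\ldots,n-1\}^{I_i}$ monochromatic for the profile coloring; adjoining the point all of whose wildcard coordinates equal $n$ extends it to a combinatorial line $b_i^{(1)},\ldots,b_i^{(n)}$ of $[n]^{I_i}$ with the key property: the color of the global position is unchanged if the level of block $I_i$ is replaced by another value in $\{1,\ldots,n-1\}$, whatever the other blocks are doing. To make every appeal to the hypothesis legitimate, the block \emph{sizes} must be fixed in the order $|I_t|,|I_{t-1}|,\ldots,|I_1|$ --- each a Hales--Jewett number depending only on $n$, $k$ and the sizes already chosen --- even though the blocks themselves are processed in the opposite order.

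Having carried this out for all $t$ blocks, I would finish with a pigeonhole over the blocks. Consider the $t+1$ ``staircase'' points $x_0,\ldots,x_t$, where $x_j$ has its first $j$ blocks at level $n$ and all remaining blocks at level $1$. Since $t+1>k$, two of them, $x_a$ and $x_b$ with $a<b$, receive the same color $\kappa$. Now form the combinatorial line of $[n]^d$ whose wildcard set is the union of the wildcard sets of the blocks $I_{a+1},\ldots,I_b$, whose fixed part keeps $I_1,\ldots,I_a$ at level $n$ and $I_{b+1},\ldots,I_t$ at level $1$, and whose $\rho$-th point places $I_{a+1},\ldots,I_b$ at level $\rho$. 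Its first and last points are exactly $x_a$ and $x_b$, hence of color $\kappa$; and for $\rho\in\{2,\ldots,n-1\}$ one reaches the $\rho$-th point from the first by raising the levels of $I_{a+1},\ldots,I_b$ from $1$ to $\rho$ one block at a time, each such single-block move keeping that block's level inside $\{1,\ldots,n-1\}$ and therefore preserving the color by the key property. So the whole line is monochromatic, completing the induction.

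The hard part is not any single step but the bookkeeping that binds them together: one must choose the number of blocks and --- much more delicately --- the block sizes in an order for which every invocation of the inductive hypothesis refers only to already-determined quantities, and one must set up the profile colorings so that precisely the single-block ``level raises'' needed in the closing staircase are the ones those profiles are insensitive to. Executing this rigorously is what inflates $d_0(n,k)$ into an enormous function of $n$ and $k$ --- although, remarkably, still primitive recursive in Shelah's formulation.
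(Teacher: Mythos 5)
Your argument is correct, but note that the paper itself offers no proof of Theorem~\ref{HJ-thm}: it is the Hales--Jewett theorem, quoted from \cite{HJ63} with pointers to \cite{Shelah-HJ} and \cite{Nilli-HJ}, so there is no in-paper argument to compare against. Checking your sketch: the reduction to combinatorial lines is valid (a combinatorial line with nonempty wildcard set is a geometric line all of whose active coordinates run $(1,2,\ldots,n)$); the base case $n=2$ via the $d+1$ points ``a block of $1$'s followed by a block of $2$'s'' is fine; and the inductive step works because you set up the profile colorings so that the earlier blocks enter through \emph{all} $n$ levels of their already-chosen lines while the later blocks enter with arbitrary content --- exactly the quantification needed so that each single-block level change inside $\{1,\ldots,n-1\}$ in the closing staircase argument is covered, with the order $|I_t|,\ldots,|I_1|$ of fixing sizes removing the apparent circularity. (Taking $t=k+1$ blocks is slightly wasteful, since $t=k$ already gives $k+1$ staircase points, but harmless.) One substantive caveat: despite your label, this is not Shelah's proof but the classical color-focusing induction going back to the original paper \cite{HJ63}: you invoke the induction hypothesis once per block, with a palette inflated by the sizes of the later blocks, and finish with a pigeonhole across blocks. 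Shelah's argument inverts this arrangement: inside each block he uses only a pigeonhole over the within-block staircase $n^{j}(n-1)^{d_i-j}$ to make the letters $n-1$ and $n$ interchangeable there, and the induction hypothesis is applied a single time, across $m=\mathrm{HJ}(n-1,k)$ blocks with the \emph{same} number of colors $k$; it is precisely this feature that yields primitive recursive bounds. In your recursion the number of colors in the recursive calls depends on previously chosen block sizes, which gives the Ackermann-type growth of the original proof, so your closing remark about primitive recursive bounds applies to Shelah's formulation but not to the construction you actually carried out. None of this affects the correctness of your proof of the statement as given, which asserts only the existence of $d_0(k,n)$.
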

\noindent(See \cite{Shelah-HJ}, \cite{Nilli-HJ} for simpler proofs/better bounds on $d_0$.) The Hales-Jewett theorem is of course a Ramsey-type result, but it implies immediately the following nice corollary, which was apparently the original motivation behind \cite{HJ63}:
\begin{corollary}\label{nd-win}
For every $n$ there exists $d_0=d_0(n)$ such that for every $d\ge d_0$ the game $n^d$ is First Player's win.
\end{corollary}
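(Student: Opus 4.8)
The plan is to obtain the corollary by feeding the Hales--Jewett theorem (Theorem~\ref{HJ-thm}) into the strategy-stealing consequence recorded in Corollary~\ref{strong-Ramsey}. Corollary~\ref{strong-Ramsey} reduces the claim ``$n^d$ is First Player's win'' to the purely combinatorial (Ramsey-type) assertion that the game hypergraph of $n^d$ has \emph{no final drawing position}, and the latter is exactly what Hales--Jewett provides once the number of colors is specialized to two.

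Concretely, I would fix $n$ and put $d_0(n) := d_0(2,n)$, the threshold supplied by Theorem~\ref{HJ-thm} for $k=2$ colors. For any $d \ge d_0(n)$, consider the strong game $n^d$ on the board $X = [n]^d$ whose winning sets are the geometric lines. A terminal position of this game in which neither player has completed a winning set is nothing but a partition of $X$ into First Player's cells and Second Player's cells with no monochromatic geometric line, i.e.\ a $2$-coloring of $[n]^d$ avoiding monochromatic geometric lines. By Theorem~\ref{HJ-thm} applied with $k=2$ and $d \ge d_0(n)$, no such coloring exists. Hence every terminal position of the game contains a geometric line entirely in one player's class, so it is a win for that player and not a draw; in other words, the game has no final drawing position. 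Corollary~\ref{strong-Ramsey} now applies verbatim and yields that First Player has a winning strategy, which is precisely the statement of Corollary~\ref{nd-win}.

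There is no real obstacle internal to this deduction --- it is essentially a one-line application of two results already in hand. The only matters requiring a little care are bookkeeping ones: verifying that a terminal position of the strong game $n^d$ is genuinely a complete $2$-coloring of the board (so that Theorem~\ref{HJ-thm} is applicable to it, regardless of whether $n^d$ is even or odd and hence of who claims the last cell), and observing that a monochromatic geometric line in such a coloring is the same thing as a winning set occupied in full by one of the players. All the genuine difficulty lives inside Theorem~\ref{HJ-thm} itself, whose (nontrivial) proof we are taking for granted here.
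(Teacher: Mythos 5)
Your argument is correct and is exactly the paper's proof: apply Theorem~\ref{HJ-thm} with $k=2$ to rule out any final drawing position, then invoke Corollary~\ref{strong-Ramsey} to conclude First Player wins. The extra bookkeeping you spell out (terminal positions being complete $2$-colorings, monochromatic lines being fully occupied winning sets) is just the detail the paper leaves implicit.
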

\noindent To see this, simply apply Corollary \ref{strong-Ramsey} and Theorem \ref{HJ-thm} with $k=2$.

Regretfully our story about strong games nearly ends here. The above two main tools (strategy stealing, Ramsey-type arguments) exhaust our set of general tools available to handle these games. In addition, strategy stealing is very inexplicit, while Ramsey-type statements frequently provide astronomic bounds. So the situation leaves a lot to be desired. At the present state of knowledge we are unable to resolve even most basic games. The inherent difficulty in analyzing strong games can be explained partially by the fact that they are not hypergraph monotone. By this we mean the existence of examples (pretty easy ones in fact, see, e.g., Ch. 9.4 of \cite{Beckbook2}) of game hypergraphs ${\mathcal H}$ which are wins for First Player, yet one can add an extra set $A$ to ${\mathcal H}$ to obtain a new hypergraph ${\mathcal H}'$ which is a draw; this is what Beck calls the {\em extra set paradox}, and it is indeed quite disturbing.

However not all is lost, and some very nice and surprising results about particular strong games have been obtained recently. We will cover them later.

\section{Maker-Breaker games}
We have established that in strong games it is beneficial to be the first player to move -- by Theorem \ref{SS} he can guarantee at least a draw. If so, and perhaps thinking more practically, the second player can lower his sights and play more defensively instead, aiming to prevent his opponent from occupying fully a winning set, or putting it differently, to ``break" into every winning set. This leads naturally to the very important notion of Maker-Breaker games. Given a hypergraph $(X,{\mathcal H})$, the {\em Maker-Breaker game} is defined as follows. There are two players, called now Maker and Breaker, taking turns in occupying one element of $X$ in each turn. We assume that Maker moves first, unless said otherwise. Maker wins if in the end of the game he has occupied fully a winning set $A\in{\mathcal H}$, Breaker wins otherwise, i.e., if he claims at least one element in every winning set.

Maker-Breaker games have certain similarities to strong games; Maker should probably be compared to First Player, and Breaker to Second Player. Observe, for example, that if Breaker wins against Maker on ${\mathcal H}$, then Second Player draws in the corresponding strong game, using the same strategy. However, these game types are more different than similar: Maker, unlike First Player, needs to occupy a winning set eventually, and not necessarily first, in order to win; there is no draw here.
Sometimes Maker-Breaker games are also called {\em weak games}, to contrast them with  strong games.

Going back to our examples from Section \ref{sec1}, we can classify some of them now as Maker-Breaker games. They are the connectivity game (Connector is our Maker --- we assumed him to move second, but this is a tiny detail), the Hamiltonicity game and the row-column game. Moreover, as we explained there, Hex can be put into this framework too.

We remark that the following intuitive statement is correct: if Maker wins the game played on ${\mathcal H}$ as the second player, then he also has a winning strategy as the first player; an analogous statement is of course valid for Breaker as well.

Let us now concentrate on the prospects of each of the players, and on tools available to argue for their corresponding sides. We start with Breaker. Breaker's win is closely related to the 2-colorability problem in hypergraphs (frequently also called Property B), popularized by Erd\H{o}s. Observe:
\begin{proposition}\label{2col}
If the Maker-Breaker game played on a hypergraph $(X,{\mathcal H})$ is Breaker's win, then ${\mathcal H}$ is 2-colorable.
\end{proposition}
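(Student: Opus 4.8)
The plan is to exploit Breaker's winning strategy by having it played simultaneously by \emph{both} participants against each other, and to read a proper $2$-colouring of $\mathcal{H}$ off the resulting final position. The starting observation is that if Breaker wins the Maker-Breaker game on $(X,\mathcal{H})$ --- which by definition means Breaker wins as the \emph{second} player --- then, by the monotonicity remark preceding the proposition (an extra move never hurts Breaker), Breaker also has a winning strategy as the \emph{first} player. Call these two strategies $\mathcal{S}_2$ (Breaker moving second) and $\mathcal{S}_1$ (Breaker moving first).

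Next I would set up an auxiliary play of the game between two players, Red and Blue, with Red moving first. Red follows $\mathcal{S}_1$, at every stage treating the moves made so far by Blue as the moves of a ``Maker'' he is trying to block; Blue follows $\mathcal{S}_2$, treating the moves made so far by Red as the moves of ``Maker''. This is internally consistent: from Red's side the move order is Red, Blue, Red, Blue, \dots, so he is indeed the first player and Blue the second; from Blue's side the very same order reads Maker (Red), Breaker (Blue), Maker, Breaker, \dots, so Blue is indeed the second player. Crucially, a participant cast in the role of Maker is under no constraints --- \emph{any} legal sequence of moves is a legal Maker play --- so neither strategy can ever be stumped, and (assuming $X$ finite, as throughout the paper) the play terminates with $X$ partitioned into Red's set $R$ and Blue's set $B$.

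It then remains to check that $(R,B)$ is a proper $2$-colouring, i.e.\ that no winning set is monochromatic. No $A\in\mathcal{H}$ satisfies $A\subseteq R$: in this play Blue executed the \emph{winning} Breaker strategy $\mathcal{S}_2$ against Red in the role of Maker, so Maker (Red) never fully occupied a winning set. Symmetrically, no $A\in\mathcal{H}$ satisfies $A\subseteq B$, since Red executed the winning Breaker strategy $\mathcal{S}_1$ against Blue in the role of Maker. Hence every $A\in\mathcal{H}$ meets both colour classes, which is exactly $2$-colourability of $\mathcal{H}$.

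The one genuinely delicate step --- the part I would be most careful about --- is this role-bookkeeping: one must be sure that a single sequence of alternating moves is simultaneously a legal ``first-player-Breaker vs.\ Maker'' play (from Red's perspective) and a legal ``Maker vs.\ second-player-Breaker'' play (from Blue's perspective), and that neither strategy is ever asked to claim an already-taken element. The first point is settled by the parity remark above; the second is automatic, since a legal Breaker strategy always prescribes an unclaimed element given a legal history. If one prefers not to invoke the monotonicity remark, one can instead let Red play an arbitrary first move and thereafter run $\mathcal{S}_2$ while ignoring it, as in a strategy-stealing argument --- the extra claimed element only strengthens Red in his role as Breaker --- which gives the same conclusion.
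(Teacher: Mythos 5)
Your proof is correct and follows essentially the same route as the paper: let both players adopt Breaker's winning strategy (as first and second player respectively, using the monotonicity remark), so that neither fully claims any winning set, and the final partition of $X$ is a proper $2$-colouring. Your extra care about the role-bookkeeping is a fuller spelling-out of exactly the argument the paper sketches.
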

\begin{proof} As we mentioned, Breaker's win as the second player guarantees his win as the first player as well. We can thus imagine the two-player game on ${\mathcal H}$ where both the first and the second players think of themselves as Breaker and follow Breaker's winning strategy for the corresponding player. Thus each of them comes out as a winner, meaning that in the end each edge $A$ of ${\mathcal H}$ will carry the marks (the colors) of both players. It follows that ${\mathcal H}$ is 2-colorable.
\end{proof}
It is customary nowadays to use random coloring to argue that a hypergraph is 2-colorable. It is perhaps much less standard to  realize that there is a game theoretic way to look at this problem. The following statement is easily proven through the usual random argument. Let $(X,{\mathcal H})$ be a hypergraph. If $\sum_{A\in{\mathcal H}}2^{-|A|}<1/2$, then ${\mathcal H}$ is 2-colorable. (Color each of the vertices of $X$ randomly and independently in red with probability 1/2 and in blue with probability 1/2; for each edge $A$ the bad event $E_A=$``$A$ is monochromatic" has probability $2^{-|A|+1}$, hence
$\text{Prob}[\bigcup_{A\in{\mathcal H}}E_A]\le\sum_{A\in{\mathcal H}}2^{-|A|+1}<1$, and thus there is  a 2-coloring of ${\mathcal H}$.) Erd\H{o}s and Selfridge \cite{ES73} provided a game strengthening of this result:
\begin{theorem}\label{ES-th}
Let $(X,{\mathcal H})$ be a hypergraph. If
\begin{equation}\label{ES-crit}
\sum_{A\in{\mathcal H}}2^{-|A|}<1/2\,,
\end{equation}
then ${\mathcal H}$ is Breaker's win.
\end{theorem}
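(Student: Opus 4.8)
The plan is to mimic the probabilistic proof via a potential function, exactly as in the Erdős–Selfridge method — this is the prototypical derandomization argument mentioned earlier. Assign to each position of the game a ``danger'' value, and let Breaker play greedily to minimize it.

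Concretely, fix a moment in the game and let $\mathcal{H}'$ denote the collection of winning sets that are still ``alive'', i.e.\ those $A \in \mathcal{H}$ containing no Breaker vertex. For each such $A$, let $a(A)$ be the number of still-unclaimed vertices of $A$ (so $a(A) = |A| - (\text{number of Maker's vertices in } A)$). Define the potential of the position as
\begin{equation}\label{pot}
\Phi = \sum_{A \in \mathcal{H}'} 2^{-a(A)}\,.
\end{equation}
The quantity $2^{-a(A)}$ is the conditional probability, given the current partial assignment, that a uniformly random completion makes $A$ entirely Maker's; thus $\Phi$ is exactly the expected number of winning sets Maker would capture by random play from here on. The initial value of $\Phi$ is $\sum_{A\in\mathcal H}2^{-|A|}<1/2$ by hypothesis~\eqref{ES-crit}. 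The heart of the proof is to show Breaker has a strategy keeping $\Phi$ below $1$ (in fact non-increasing after each of his replies) throughout the game: since $\Phi$ can only drop when a set dies, and a set $A$ fully claimed by Maker would contribute $2^{-0}=1 \ge 1$ to $\Phi$, a value of $\Phi$ always strictly below $1$ certifies that no alive set is ever completed by Maker — and a set killed earlier by Breaker is never a problem — so Breaker wins.

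**The key computation** is the effect of one round. Suppose it is Maker's turn with potential $\Phi$. If Maker claims a free vertex $x$, every alive set through $x$ either survives with $a(A)$ decreased by $1$ (doubling its weight) or, if $x$ was its last free vertex, is completed — but we will argue this cannot happen. Let $w(x) = \sum_{A \ni x,\ A \text{ alive}} 2^{-a(A)}$; after Maker takes $x$ the potential becomes $\Phi + w(x)$, since the weights of sets through $x$ double. Now Breaker replies with a free vertex $y$. Choosing $y$ to maximize $w(y)$ (a greedy choice among vertices not yet taken), Breaker kills all alive sets through $y$, removing weight at least $w(y) \ge w(x)$ — here using that at the moment Breaker picks, $x$ is already Maker's so the sets through $x$ are not among Breaker's options, but $w(x)$ computed over sets alive \emph{before} Maker's move still bounds what Breaker can strip, after a short bookkeeping check that Maker's grab of $x$ does not inflate $w(y)$ past $w(x)$ for the maximizing $y$. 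Hence the post-round potential is at most $\Phi + w(x) - w(y) \le \Phi$. Tracking this carefully — in particular getting the inequality $w(y)\ge w(x)$ with the indices lined up correctly — is the one delicate point; the standard fix is to let Breaker break ties and to note any set completed by Maker's move $x$ would already force $w(x)\ge 1$ hence $\Phi\ge 1$ before the round, contradicting the invariant.

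**The main obstacle** is thus purely the indexing in the one-round estimate: making sure the weight Breaker removes dominates the weight Maker adds, despite the two being measured at slightly different instants (before versus after Maker's move). Everything else — the base case from \eqref{ES-crit}, the monotonicity of $\Phi$ under Breaker's kills, and the final implication ``$\Phi<1$ always $\Rightarrow$ Breaker wins'' — is routine. One should also handle the trivial edge case where Maker has no legal move or all remaining sets are already dead, where the invariant holds vacuously.
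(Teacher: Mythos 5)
Your potential function and the greedy idea are the right ones, but the round is grouped the wrong way, and the central inequality you rely on is false as stated. You pair Maker's move $x$ with Breaker's \emph{following} reply $y$ and claim the weight Breaker strips satisfies $w(y)\ge w(x)$, so that $\Phi$ is non-increasing after each of Breaker's replies. Counterexample: let $\mathcal H$ consist of three sets of size $3$ whose pairwise intersection is a single common vertex $x$, say $A_j=\{x,u_j,v_j\}$, $j=1,2,3$; then $\sum_A 2^{-|A|}=3/8<1/2$, so the hypothesis holds. Maker opens with $x$, raising the potential from $3/8$ to $3/4$ (he added $w(x)=3/8$), but every free vertex now lies in exactly one live set of two free vertices, so the best Breaker reply removes only $1/4$; the potential after Breaker's reply is $1/2>3/8$. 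So neither $w(y)\ge w(x)$ (at either time instant) nor your claimed monotonicity holds, and this is not a tie-breaking issue: Maker's added weight can be spread over many sets meeting only in $x$, and no single Breaker vertex can recover it. Your proposed patch is also numerically off: a set completed by Maker's move $x$ had one free vertex beforehand, so it contributes $2^{-1}=1/2$ to $w(x)$ and to $\Phi$, not $1$, so no contradiction with ``$\Phi<1$'' arises before the round.

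The missing idea is the opposite grouping, which is what the paper uses: compare Breaker's move with Maker's \emph{next} move, not his previous one. Handle Maker's very first move by the slack in the hypothesis (the initial value $<1/2$ at most doubles, so the potential is $<1$ after Maker's first move), and then show that over each round consisting of Breaker's move followed by Maker's move the potential does not increase: Breaker claims the free vertex of maximum danger; since Maker's subsequent vertex was still free when Breaker maximized, and dangers can only drop as a result of Breaker's kill, the weight Breaker removes dominates the weight Maker then adds (with the double-counted sets containing both vertices only helping). With the checkpoints placed right after Maker's moves, a winning set fully claimed by Maker would contribute $2^{0}=1$ and force the potential to be at least $1$ at that very checkpoint, contradicting the invariant; this also disposes of the endgame issue in your version, where Maker's last move has no Breaker reply after it and your checkpoint never occurs. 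In the example above this regrouped accounting works fine: after Maker's first move the value is $3/4<1$, and each subsequent (Breaker, Maker) pair is non-increasing, so Breaker wins, even though the quantity you track does go up.
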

\begin{proof} At any stage of the game the board $X$ is split into three sets: the set $M$ of vertices claimed by Maker, the set $B$ of vertices of Breaker, and the set $F$ of currently free vertices. Define the potential function $\Psi=\sum_{A\in{\mathcal H}:A\cap B=\emptyset} 2^{-|A\setminus M|}$.  Observe that if Maker occupies at some point of the game a winning set $A\in{\mathcal H}$ fully, then  $\Psi\ge 1$ at that point. Thus, for Breaker to win it is enough to maintain the value of $\Psi$  below 1 during the game. The initial value of the potential is less than $1/2$ by the assumption of the theorem; after the first move of Maker it increases by at most the factor of 2 and is thus less than 1, a good start. Hence it is enough to prove that Breaker has a strategy to ensure that after each round (a round here is Breaker's move, followed by Maker's move) the value of $\Psi$ does not increase. Suppose we are in the beginning
  of round $i$ with partition $X=M_{i-1}\cup B_{i-1}\cup F_{i-1}$ and potential $\Psi_{i-1}$. Breaker's choice $b_i$ is natural: he chooses to claim the element $b_i$ maximizing the potential's decrease: $$b_i=\argmax_b\sum_{{A\cap B_{i-1}=\emptyset}\atop{b\in A}}2^{-|A\setminus M_{i-1}|}\,.$$ If Maker then claims an element $m_i$, the updated value $\Psi_i$ of the potential is:
$$
\Psi_i=\Psi_{i-1}-\sum_{{A\cap B_{i-1}=\emptyset}\atop{b_i\in A}}2^{-|A\setminus M_{i-1}|}+\sum_{{A\cap B_{i-1}=\emptyset}\atop{m_i\in A}}2^{-|A\setminus M_{i-1}|}-\sum_{{A\cap B_{i-1}=\emptyset}\atop{b_i,m_i\in A}}2^{-|A\setminus M_{i-1}|}\le \Psi_{i-1}\,,
$$
due to the choice of $b_i$.\end{proof}
The following construction shows that criterion (\ref{ES-crit}) is tight. Let $X=\{c\}\cup\{l_1,\ldots,l_k\}\cup\{r_1,\ldots,r_k\}$. Define ${\mathcal H}=\{A\subset X: c\in A,\, |A\cap \{l_i,r_i\}|=1,\,i=1,\ldots, k\}$. Maker wins the game on ${\mathcal H}$ by first taking $c$, and then claiming the sibling of Breaker's move ($l_i$ for $r_i$, and  $r_i$ for $l_i$).

The proof of the Erd\H{o}s-Selfridge theorem is quite simple (not a bad thing in itself!), but the result is truly remarkable for a variety of reasons. First, it provides a concrete and  very useful criterion for Breaker's win. It also serves as an inspiring example of applying potential functions in positional games. Another important feature of the proof is that it supplies a simple polynomial (in $|X|+|{\mathcal H}|)$ algorithm for Breaker to win. Essentially the same argument can be used to derandomize the random 2-coloring argument given above. In fact, this was the first instance of the  {\em method of conditional probabilities}, an important  general approach to derandomizing randomized algorithms, one of the major topics in theoretical computer science, see, e.g., \cite{AS,MR}. Thus, the field of positional games reaches well beyond its immediate scope.

Here is an example of applying the Erd\H{o}s-Selfridge criterion. In the Maker-Breaker version of the clique game $(K_n,K_q)$, Maker wins if in the end of the game he has claimed a clique of size $q$. Let $q(n)$ be the largest $q$ for which Maker wins the game. We can argue that $q(n)\le 2\log_2n$. Indeed, the game hypergraph ${\mathcal H}$ has $\binom{n}{q}$ edges and is $\binom{q}{2}$-uniform. Hence, by (\ref{ES-crit}) if $\binom{n}{q}2^{-\binom{q}{2}}<1/2$, Breaker wins. Solving this inequality for $q=q(n)$ gives the claimed bound. For the lower bound on $q(n)$, recall that if  $n\ge R(q,q)$, then First Player wins in the corresponding strong game, and Maker can follow his footsteps. Plugging in the standard upper bound $R(q,q)<4^q$ we derive $q(n)\ge \frac{1}{2}\log_2n$. Beck discusses the asymptotically tight lower bound $q(n)\ge (2-o(1))\log_2n$ in his book \cite{Beckbook}.

For Maker's side, Beck proved \cite{Beck81} the following criterion.
\begin{theorem}\label{Maker}
Let $(X,{\mathcal H})$ be an $r$-uniform hypergraph. If $|{\mathcal H}|>2^{r-3}\cdot\Delta_2({\mathcal H})\cdot|X|$, where $\Delta_2({\mathcal H})=\max\{deg(x,y) : x\ne y\in X\}$ and $deg(x,y)=|\{A\in{\mathcal H}: x,y\in A\}|$, then ${\mathcal H}$ is Maker's win.
\end{theorem}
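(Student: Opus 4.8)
The plan is to give Maker an explicit strategy and to certify its success with a potential function, reversing the Erd\H{o}s--Selfridge point of view so that it is Maker who keeps the potential large. For a position in which Maker has claimed the set $M$, Breaker the set $B$, and $F$ is the set of still-free vertices, put
$$\Phi \;=\; \sum_{A \in \mathcal{H},\ A \cap B = \emptyset} 2^{|A \cap M|}\,.$$
Then $\Phi = |\mathcal{H}|$ at the start, and at the end of the game, when $F=\emptyset$, each winning set is either blocked (it meets $B$, and contributes $0$) or fully owned by Maker (and contributes $2^{r}$); hence $\Phi>0$ at the end is equivalent to Maker having won. So it suffices to show Maker can keep $\Phi$ positive.

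Maker's strategy is: if some winning set is missing only one vertex and that vertex is still free, claim it and win; otherwise claim a free vertex $m$ maximising the resulting gain $\sum_{A\ni m,\ A\cap B=\emptyset} 2^{|A\cap M|}$. I would then bound the change of $\Phi$ over one round --- Maker's move $m$ followed by Breaker's move $b$. Maker's move doubles the weight $2^{|A\cap M|}$ of every still-alive winning set through $m$ (and changes nothing else); Breaker's move erases from $\Phi$ the updated weights of every still-alive winning set through $b$. Splitting the latter into sets avoiding $m$ and sets containing $m$: the first part is at most the ``old'' weight-sum at $b$, which is at most Maker's gain, since $b$ was a legal choice for Maker when $m$ was picked; the second part is the total of the doublings over winning sets containing both $m$ and $b$, of which there are at most $\Delta_2(\mathcal{H})$. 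Finally, in the greedy case no alive winning set is missing only one vertex, so every alive winning set through $m$ has at least two free vertices and thus old weight at most $2^{r-2}$. Putting these together, one round lowers $\Phi$ by at most $\Delta_2(\mathcal{H})\cdot 2^{r-2}$.

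The game has at most $|X|$ moves, hence at most $|X|/2$ full (Maker, Breaker) rounds, and a leftover unpaired Maker move, if any, only raises $\Phi$. So $\Phi$ decreases overall by at most $\tfrac{|X|}{2}\cdot\Delta_2(\mathcal{H})\cdot 2^{r-2} = 2^{r-3}\,\Delta_2(\mathcal{H})\,|X|$, which is strictly less than the initial value $|\mathcal{H}|$ by hypothesis. Thus $\Phi>0$ at the end, i.e.\ Maker owns a full winning set; since by construction Maker also wins the moment any winning set becomes missing-exactly-one, either way Maker wins.

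The crux is the per-round estimate, and it rests on three ingredients that must cooperate: the greedy move controls Maker's side; the co-degree parameter $\Delta_2(\mathcal{H})$ is precisely what limits the ``extra'' winning sets Breaker can destroy among those Maker has just boosted; and the rule that Maker always grabs an immediate win is what keeps each boosted set's weight at $2^{r-2}$ rather than $2^{r-1}$, hence what prevents the final bound from being off by a factor of $2$ (and explains the constant $2^{r-3}$). The remaining points --- the end-of-game value of $\Phi$, the bound on the number of rounds, and the translation between ``$\Phi>0$'' and ``Maker won'' --- are routine bookkeeping.
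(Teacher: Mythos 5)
Your proof is correct and takes exactly the route the paper points to: the paper omits the argument, saying only that it is ideologically similar to Erd\H{o}s--Selfridge with a suitable potential, and your Maker-side potential $\Phi=\sum_{A\cap B=\emptyset}2^{|A\cap M|}$ together with the greedy/immediate-win strategy is precisely that standard argument, with the co-degree bound and the ``missing at least two vertices'' observation supplying the constants. One step should be written more carefully: with your literal split (first part $\le$ the old weight-sum $g_b$ at $b$, which is $\le$ Maker's gain $g_m$; second part $=$ the doubled weights of alive sets through both $m$ and $b$) you would only get a per-round loss of $\Delta_2\cdot 2^{r-1}$, so to reach the stated $\Delta_2\cdot 2^{r-2}$ note that the total weight erased by Breaker equals $g_b+\sum_{A\ni m,b,\ A\cap B=\emptyset}2^{|A\cap M|}$, i.e.\ the old weights of the sets containing both $m$ and $b$ are already absorbed in the comparison $g_b\le g_m$, and only the doubling increments --- at most $\Delta_2$ of them, each at most $2^{r-2}$ --- are charged extra.
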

The proof is similar ideologically to that of Erd\H{o}s and Selfridge and uses an appropriately defined potential function. To illlustrate this criterion, consider the arithmetic progression game $W(n,s)$. This is a Maker-Breaker game, whose board is $[n]$, and Maker wins if in the end he claims an arithmetic progression of length $s$. Let $s(n)$ be the largest $s$ for which Maker wins the game $W(n,s)$. Since the number of arithmetic progressions of length $s$ in $n$ is easily seen to exceed $\frac{n^2}{4(s-1)}$, and each pair of elements $x\ne y\in [n]$ is contained together is at most $\binom{s}{2}$ arithmetic progressions, applying Theorem \ref{Maker} gives $s(n)\ge (1-o(1))\log_2n$. An application of the Erd\H{o}s-Selfridge bound for Breaker's side gives $s(n)< 2\log_2n+1$. The right asymptotic answer here is actually $s(n)=(1+o(1))\log_2n$ \cite{Beck81}.

\section{Biased games, threshold bias}
Many (unbiased) Maker-Breaker games are drastically in favor of Maker, and he wins them easily. Here are few such examples, where the board is the edge set of $K_n$, we assume $n$ to be large enough. In the triangle game, where Maker wins if in the end he creates a copy of the triangle $K_3$, Maker is easily seen to win in 4 moves. For the connectivity game, Maker wins as well, say, by Lehman's Theorem \ref{Lehman_th}. The {\em non-planarity game}, where Maker wins if in the end his graph is non-planar, is a complete no-brainer: by Euler's formula, a graph on $n$ vertices with more than $3n-6$ edges is non-planar, so Maker just waits to accumulate $3n-5$ edges to declare his victory. If so, it appears quite natural to change the rules of the game to level the field and to increase Breaker's chances to win. One obvious way to do it is to introduce the game bias, as proposed in the pioneering paper of Chv\'atal and Erd\H{o}s \cite{CE78}; this is the subject of this section. Another possible approach is to sparsify the board of the game, and we will discuss it later.

Here is a formal definition of a biased Maker-Breaker game. Let $m$ and $b$ be positive integers, and let $(X,{\mathcal H})$ be a hypergraph. The {\em biased $(m:b)$ Maker-Breaker game} $(X,{\mathcal H})$ is the same as the Maker-Breaker game $(X,{\mathcal H})$, except that Maker claims $m$ free board elements per move and Breaker claims $b$ elements. The numbers $m$ and $b$ are referred to as the {\em bias} of Maker and Breaker, respectively. The most frequently considered case is that of $m=1$.

To illustrate the definition, consider the biased $(1:b)$ {\em triangle game} ${\mathcal H}_{K_3,n}$, as was done in \cite{CE78}. For $b\le \sqrt{2n}-C$ for some $C>0$ Maker wins by first accumulating enough edges at a vertex $u$, and then by closing a triangle containing $u$. For $b\ge 2\sqrt{n}$, the game is Breaker's win --- in response to each move $e_i=(u_i,v_i)$ of Maker, Breaker claims free $b/2$ edges incident to $u_i$ and $b/2$ free edges incident to $v_i$, also blocking all immediate threats of Maker. The critical value of $b=b(n)$ for this game is still unknown, Balogh and Samotij \cite{BS11} improved recently Breaker's side to $b\ge (2-1/24)\sqrt{n}$.

If we are serious about biased Maker-Breaker games, we should probably start our systematic study from the simplest case where the winning sets of the game hypergraph are pairwise disjoint. This is the famous Box Game introduced by Chv\'atal and Erd\H{o}s \cite{CE78}. In a game Box$(p,q;a_1,\ldots,a_n)$ the board of the game $X$ is a union of pairwise disjoint sets (boxes) $A_1,\ldots,A_n$ of sizes $a_1,\ldots,a_n$, respectively, forming the game hypergraph. To pay homage to this important game, and also with future games with identity changes in mind, we call players BoxMaker and BoxBreaker. In each move BoxMaker removes $p$ elements from the boxes, and BoxBreaker destroys $q$ boxes of his choice in return. BoxMaker wins if in the end he manages to empty one of the boxes before it is destroyed by BoxBreaker. In the case where all $n$ boxes are of equal size $s$, we use the notation Box$(p,q;n\times s)$. This game was analyzed by Chv\'atal and Erd\H
 {o}s for the nearly uniform case, the analysis for the general case was performed by Hamidoune and Las Vergnas \cite{HL}.
\begin{theorem}\textnormal{\cite{CE78}}\label{Box-Maker}
If $s\le (p-1)\sum_{i=1}^{n-1}1/i$, then BoxMaker, as the first or the second player, wins \textnormal{Box}$(p,1;n\times s)$.
\end{theorem}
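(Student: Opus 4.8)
The plan is to analyze the game inductively, following the natural flow of play: BoxMaker repeatedly dumps all $p$ of his tokens into a single box (the most depleted surviving box), while BoxBreaker's only sensible reply is to destroy whichever box is currently closest to being emptied. First I would reduce the problem to a clean recursive inequality. Suppose the game reaches a position with exactly $j$ boxes left, and let $f(j)$ denote the maximum common box size $s$ for which BoxMaker still wins a Box$(p,1;j\times s)$ subgame (moving next). On BoxMaker's turn he removes $p$ elements from one box, after which BoxBreaker destroys one of the remaining $j-1$ boxes; the worst case for BoxMaker is that BoxBreaker removes the box BoxMaker was working on, but a standard exchange/averaging argument shows BoxMaker does at least as well by spreading pressure so that after the round he faces a Box$(p,1;(j-1)\times s')$ position with $s'$ only slightly smaller than $s$. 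Carrying this through yields the recursion $f(j) \ge f(j-1) + \lceil p \cdot \tfrac{\text{something}}{j-1}\rceil$, and unwinding it produces the harmonic sum $(p-1)\sum_{i=1}^{n-1} 1/i$.

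More concretely, the key steps in order are: (1) establish that we may assume BoxMaker always plays within a single box and BoxBreaker always destroys a minimum-size surviving box (a strategy-stealing / dominance argument: any deviation by either player can be mirrored without loss); (2) prove the base case $n=1$ (BoxMaker trivially empties the single box of size at most $0 = (p-1)\sum_{i=1}^{0}1/i$ over the course of play, or more usefully handle $n=2$ directly: with two boxes of size $s$, after BoxMaker removes $p$ from one and BoxBreaker destroys the other, BoxMaker wins iff $s \le p$, consistent with $(p-1)\cdot 1$ up to the rounding built into the sum); (3) set up the induction: with $n$ boxes of size $s \le (p-1)\sum_{i=1}^{n-1}1/i$, have BoxMaker remove $p$ elements from one box, reducing it to size $s-p$; BoxBreaker destroys some box, leaving $n-1$ boxes, the smallest of which has size at most $s - \lceil p/(n-1)\rceil$ if BoxMaker distributed his removals cleverly over the first few moves, or more simply argue that BoxMaker can force a position equivalent to Box$(p,1;(n-1)\times s')$ with $s' \le s - (p-1)/(n-1)$; (4) check that $s' \le (p-1)\sum_{i=1}^{n-2}1/i$ and invoke the inductive hypothesis; (5) finally, note the "first or second player" claim follows because an extra move can only help BoxMaker (an arbitrary initial move by BoxMaker in some box never hurts), so a second-player win implies a first-player win.

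The main obstacle I expect is step~(3): making the bookkeeping of box sizes precise so that the loss per round is genuinely at most $(p-1)/(n-1)$ (amortized) rather than the naive $p/(n-1)$. The subtlety is that BoxMaker cannot literally remove a fractional amount $p/(n-1)$ from each box per round; instead one must show that over a suitable block of moves BoxMaker can keep the $n-1$ surviving boxes nearly balanced in size, so that BoxBreaker's destruction of one box costs BoxMaker only the ``shared'' decrement. Handling the floors and ceilings here — and in particular verifying that the integrality slack is exactly absorbed by the difference between $\sum 1/i$ and its continuous analogue — is where the real care is needed; everything else is routine once the recursion is in hand. Equality analysis (showing the bound $s \le (p-1)\sum_{i=1}^{n-1}1/i$ is sharp) can be packaged separately via an explicit BoxBreaker strategy that always destroys a largest remaining box, but that belongs to the companion lower-bound statement rather than to this theorem.
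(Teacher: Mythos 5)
Your proposal is built on the wrong strategy for BoxMaker, and the one genuinely hard step is left unresolved. Dumping all $p$ removals into a single (most depleted) box is exactly what BoxMaker must \emph{not} do: BoxBreaker simply destroys whichever box BoxMaker just attacked, the round produces no net progress, and iterating this leaves a single untouched box of size $s$ at the end, so this strategy wins only when $s$ is about $p$ --- nowhere near the harmonic bound $(p-1)\sum_{i=1}^{n-1}1/i\approx(p-1)\ln n$. For the same reason the dominance claim in your step (1), that we may assume BoxMaker always plays inside one box, is false; and since you are proving a win for BoxMaker you cannot restrict BoxBreaker to his ``sensible'' strategy of destroying a smallest box without a separate argument --- BoxMaker must beat \emph{every} BoxBreaker. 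The correct strategy (the one in the paper) is the opposite of concentration: in each turn BoxMaker distributes his $p$ removals so as to keep all surviving boxes leveled, i.e.\ with sizes differing by at most one; then it does not matter which box BoxBreaker destroys, since all boxes are equally depleted. You do gesture at this balancing idea in your final paragraph, but it contradicts your step (1) and is never turned into an argument.

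The quantitative heart of the proof --- that the amortized loss per round with $j$ surviving boxes is small enough that the coefficient comes out as $p-1$ rather than $p$, floors included --- is precisely what you flag as the ``main obstacle'' and do not carry out; but that is the proof, not a finishing detail. The paper resolves it by proving a more general statement for nearly-level (not uniform) sequences $a_1\le\cdots\le a_n\le a_1+1$: define $f(1,p)=0$ and $f(n,p)=\left\lfloor \frac{n\,(f(n-1,p)+p)}{n-1}\right\rfloor$ for $n\ge 2$, and show by induction that if $\sum_{i=1}^n a_i\le f(n,p)$ then BoxMaker, as second player, wins Box$(p,1;a_1,\ldots,a_n)$, the inductive move being exactly the leveling move above; one then checks $f(n,p)\ge (p-1)\,n\sum_{i=1}^{n-1}1/i$, which specializes to the uniform game $\textnormal{Box}(p,1;n\times s)$. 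Passing to nearly-level sequences is essential: after one round the boxes are no longer uniform, so an induction phrased only for Box$(p,1;(n-1)\times s')$, as in your step (3), cannot close. The only part of your plan that stands as written is step (5): a second-player win for BoxMaker does give the first-player claim, since an extra move cannot hurt him.
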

\begin{proof} Follows from a bit more general statement we give now. Let $a_1\le\ldots a_n\le a_1+1$. Define $f(n,p)$ by the following recursion: $f(1,p)=0$ and $f(n,p)=\left \lfloor \frac{n (f(n-1, p) + p)}{n-1} \right \rfloor$ for $n \geq 2$. If $\sum_{i=1}^na_i\le f(n,p)$, then BoxMaker, as the second player, wins Box$(p,1;a_1,\ldots,a_n)$. The proof proceeds by induction, for the inductive step BoxMaker in his current turn removes $p$ elements to keep the surviving boxes leveled. One can easily show that $f(n,p)\ge (p-1)n\sum_{i=1}^{n-1}1/i$.
\end{proof}
\begin{theorem}\textnormal{\cite{CE78}}\label{Box-Breaker}
If $s> p\sum_{i=1}^n1/i$, then BoxBreaker wins \textnormal{Box}$(p,1;n\times s)$.
\end{theorem}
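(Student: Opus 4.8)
The plan is to give BoxBreaker an explicit strategy: on each of his turns, destroy a surviving box currently holding the fewest elements, ties broken arbitrarily. It is enough to treat the case where BoxMaker moves first, since if BoxMaker moves second then BoxBreaker's opening move reduces the game to the first-mover situation with $n-1$ boxes of size $s > pH_n > pH_{n-1}$ (here $H_t=\sum_{i=1}^t 1/i$). So I assume BoxMaker moves first.

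The core of the argument is a carefully chosen invariant, maintained at each moment right before a BoxMaker move. At such a moment, say $k$ boxes survive with sizes $a_1 \le a_2 \le \dots \le a_k$; call the position \emph{safe} if $\sum_{i=1}^t a_i > t\,p\,H_t$ for every $t\in\{1,\dots,k\}$. The $t=1$ clause says $a_1 > p$, so in a safe position BoxMaker, who removes only $p$ elements per move, cannot empty any box and all boxes stay nonempty after his move. The initial position is safe because $s > pH_n \ge pH_t$ forces $ts > tpH_t$; and for uniform boxes the whole family of inequalities collapses to exactly $s > pH_n$, the hypothesis. Granted that safety is preserved from one BoxMaker move to the next, the theorem follows: BoxMaker never empties a box, BoxBreaker removes one box per turn, and after $n$ turns the board is empty, so BoxBreaker has won.

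For the preservation step, suppose $a_1 \le \dots \le a_k$ is safe, BoxMaker removes $x_i \ge 0$ from box $i$ with $\sum_i x_i = p$, producing boxes of sizes $b_i = a_i - x_i > 0$, and BoxBreaker destroys a box of minimum new size; relabelling so $b_{(1)} \le \dots \le b_{(k)}$, the survivors are $b_{(2)} \le \dots \le b_{(k)}$, and I must check $\sum_{j=2}^{t+1} b_{(j)} > tpH_t$ for each $t \le k-1$. Two elementary facts suffice. First, for any $m$ the sum of the $m$ smallest $b_i$ is at least $(a_1 + \dots + a_m) - p$, since the removals from any fixed set of $m$ boxes total at most $p$. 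Second, $b_{(1)}$ is at most the average of $b_{(1)},\dots,b_{(t+1)}$, so $\sum_{j=2}^{t+1} b_{(j)} \ge \tfrac{t}{t+1}\sum_{j=1}^{t+1} b_{(j)} \ge \tfrac{t}{t+1}\big[(a_1+\dots+a_{t+1}) - p\big]$. Now apply safety at index $t+1$: $a_1+\dots+a_{t+1} > (t+1)pH_{t+1} = (t+1)pH_t + p$, so the bracket exceeds $(t+1)pH_t$ and the whole expression exceeds $\tfrac{t}{t+1}\cdot(t+1)pH_t = tpH_t$, as needed.

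The one genuinely delicate point is guessing the invariant. The naive choice ``every surviving box has more than $pH_k$ elements'' is \emph{false}: one BoxMaker move can drop the second-smallest box by nearly $p/2$ no matter how many boxes there are, so that bound is not maintained even though BoxBreaker still wins. The partial-sum family $\sum_{i=1}^t a_i > tpH_t$ is the right strengthening precisely because the telescoping identity $(t+1)H_{t+1} = (t+1)H_t + 1$ is exactly what allows the loss of one box, together with a transfer of up to $p$ elements, to be absorbed; once the invariant is found, everything else is routine.
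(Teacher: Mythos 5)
Your proof is correct, and it rests on the same two pillars as the paper's: BoxBreaker always destroys a surviving box of minimum size, and the win is certified by an averaging argument driven by the harmonic telescoping $(t+1)H_{t+1}=(t+1)H_t+1$ (writing $H_t=\sum_{i=1}^t 1/i$). The packaging, however, is genuinely different. The paper argues by contradiction: assuming BoxMaker empties box $k$ at his $k$th move, it tracks the single potential $\Psi(j)=\frac{1}{k-j+1}\sum_{i=j}^{k}c_i$, the average current size of the still-relevant boxes just before BoxMaker's $j$th move, shows $\Psi(j+1)\ge\Psi(j)-p/(k-j+1)$ (BoxMaker removes at most $p$ in total, and deleting a minimum, hence below-average, box cannot lower the average), and telescopes to $\Psi(k)\ge s-p(H_n-1)>p$, contradicting $\Psi(k)\le p$. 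You instead maintain forward in time, before every BoxMaker move and for every prefix length $t$, the sorted partial-sum invariant $\sum_{i=1}^t a_i>tpH_t$; your preservation step uses exactly the same two observations (any fixed family of $t+1$ boxes loses at most $p$ in one BoxMaker move, and the destroyed minimum box is at most the average), just quantified over all $t$ simultaneously. What your route buys is a contradiction-free argument that never needs to single out a hypothetical winning move, and it in fact establishes a slightly more general criterion -- any position, uniform or not, whose sorted prefix sums satisfy these inequalities is a BoxBreaker win, in the spirit of the Hamidoune--Las Vergnas analysis of the general box game; the paper's version is a little leaner because it only has to control one window of boxes. Your reduction of the second-mover case and the endgame bookkeeping are both fine.
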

\begin{proof} We give a proof from \cite{HKSSbook}. At any point of the game, denote the set of surviving boxes by $S$. BoxBreaker always destroys a box $i\in S$ of minimum size. Suppose by contradiction that BoxMaker wins the game at move $k$, $1\le k\le n$. W.l.o.g. assume that BoxBreaker destroys box $i$ in his $i$th move, and in his $k$th move BoxMaker fully claims box $k$. Let $c_i$ denote the remaining size of box $i\in S\cap \{1,\ldots,k\}$. Define now the potential function $\Psi$ by
$$
\Psi(j) :=\frac{1}{k-j+1}\sum_{i=j}^k c_i\,,
$$
the potential just before BoxMaker's move $j$. Then $\Psi(k)=c_k\le p$, as BoxMaker wins the game at move $k$, while $\Psi(1)=s$. In his $j$th move BoxMaker decreases  $\Psi(j)$ by at most $p/(k-j+1)$; in his $j$th move BoxBreaker destroys the smallest surviving box. Thus $\Psi(j+1)\ge \Psi(j)-p/(k-j+1)$. It follows that
$$
\Psi(k)\ge s-\left(\frac{p}{k}+\frac{p}{k-1}+\cdots+\frac{p}{2}\right)\ge s-p\left(\sum_{i=1}^n\frac{1}{i}-1\right)>p\,,
$$
a contradiction.
\end{proof}
\noindent We conclude that for the uniform Box Game Box$(p,q;n\times s)$, the game changes hands around $p=s/\ln n$.

Returning to general biased Maker-Breaker games, let us state a criterion for Breaker's win due to Beck \cite{Beck82}, sometimes called the biased Erd\H{o}s-Selfridge criterion.
\begin{theorem}\label{biasedBreaker}
Let $X$ be a finite set, let ${\mathcal H}$ be a family of subsets of $X$, and let $p$ and $q$ be positive integers.
If
\begin{equation}
\sum_{A \in {\mathcal H}} (1 + q)^{- |A|/p} < \frac{1}{1+q}\label{biasedES} \,,
\end{equation}
then Breaker has a winning strategy in the $(p:q)$ game $(X, {\mathcal H})$.
\end{theorem}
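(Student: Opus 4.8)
The plan is to mimic the potential-function argument from the proof of the Erd\H{o}s--Selfridge theorem (Theorem~\ref{ES-th}), adapting the base of the exponential and the bookkeeping to account for the biases $p$ and $q$. At any stage of the game the board $X$ splits into Maker's set $M$, Breaker's set $B$, and the free set $F$. I would define the potential
$$
\Psi \;=\; \sum_{A\in{\mathcal H}\,:\,A\cap B=\emptyset} (1+q)^{-|A\setminus M|/p}\,,
$$
so that if Maker ever completes a winning set $A$ then at that moment $|A\setminus M|=0$ and the single term contributed by $A$ equals $1$, hence $\Psi\ge 1$. Consequently it suffices for Breaker to keep $\Psi<1$ throughout. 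The hypothesis~\eqref{biasedES} says $\Psi<\frac{1}{1+q}$ at the start. Since I take Breaker to move first here (the game is a $(p:q)$ game with Maker claiming $p$ and Breaker claiming $q$; the harmless first-move/second-move issue for Breaker can be handled as in the unbiased case), the goal is to show Breaker has a strategy so that after each full round (Breaker's $q$-move followed by Maker's $p$-move) the potential does not increase, together with the observation that Maker's very first $p$-move — if Maker moves first — multiplies $\Psi$ by at most $(1+q)^{1}$ and thus keeps it below $1$.

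The key step is the per-round estimate. When Breaker claims $q$ free elements $b_1,\dots,b_q$, every surviving set containing at least one $b_j$ is killed and drops out of the sum; the natural greedy choice is for Breaker to pick the $q$ elements maximizing the total weight removed, or — more robustly for the analysis — to pick them one at a time, each time taking a currently free element of maximum marginal weight. When Maker then claims $p$ free elements $m_1,\dots,m_p$, each surviving set $A$ still alive has each of its terms multiplied by at most $(1+q)^{1/p}$ for each $m_i\in A$; a set containing $j$ of Maker's new elements has its term multiplied by exactly $(1+q)^{j/p}$. The crucial inequality is the elementary bound $(1+q)^{j/p}\le 1 + j\big((1+q)^{1/p}-1\big)\cdot\frac{(1+q)^{1/p}}{1}$ — more cleanly, convexity gives that over all surviving sets the total weight increase caused by Maker is at most $\big((1+q)^{1/p}-1\big)$ times the sum, over $i=1,\dots,p$, of the weight of sets through $m_i$ — which one compares against the weight Breaker removed. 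Since Breaker removed the $q$ heaviest available marginal weights and Maker can add at most $p$ elements whose combined marginal effect is amplified by the factor $(1+q)^{1/p}-1$, a counting argument of the form ``$q$ largest $\ge$ a $\frac{q}{p+q}$-fraction, and $(1+q)^{1/p}-1\le \frac{q}{p}\cdot\frac{1}{1+q}$ type slack'' shows the net change is $\le 0$. This is exactly the place where the constant $\frac{1}{1+q}$ on the right-hand side of~\eqref{biasedES} is consumed.

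I expect the main obstacle to be the clean packaging of this round estimate: one must simultaneously handle (i) the fact that a single set may contain several of Maker's $p$ new points, so the multiplicative factor is $(1+q)^{j/p}$ rather than a sum of $p$ equal contributions, and (ii) the comparison between what Breaker's greedy $q$ picks remove and what Maker's $p$ picks add, which is really a statement that the sum of the $q$ largest marginal weights dominates $\frac{q}{p+q}$ times the total, combined with the calculus inequality $\big((1+q)^{1/p}-1\big)\le \frac{\ln(1+q)}{p}\cdot(1+q)^{1/p}$ or a similar convexity bound, to absorb the amplification. Once these two pieces are in place the induction on rounds closes immediately, and the conclusion $\Psi<1$ at the end of the game means no winning set was ever completed by Maker, i.e.\ Breaker wins.
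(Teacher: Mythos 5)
Your skeleton is the right one and matches the intended argument (the paper itself only remarks that the proof is "similar to that of Theorem \ref{ES-th}" and omits it): the potential $\Psi=\sum_{A\cap B=\emptyset}\lambda^{-|A\setminus M|}$ with $\lambda=(1+q)^{1/p}$, Breaker claiming his $q$ elements greedily one at a time by maximum danger, and the observation that Maker's first move multiplies $\Psi$ by at most $\lambda^{p}=1+q$ — which, incidentally, is where the constant $\tfrac{1}{1+q}$ in \eqref{biasedES} is consumed, not in the round estimate as you later assert. The genuine gap is the per-round estimate itself, which is the whole non-trivial content, and the inequalities you offer to close it are false or insufficient. The bound $(1+q)^{j/p}\le 1+j\bigl((1+q)^{1/p}-1\bigr)(1+q)^{1/p}$ fails for $j\ge 3$; the "convexity" claim that Maker's total increase is at most $(\lambda-1)$ times the sum of the weights of sets through $m_1,\dots,m_p$ (weights taken before Maker's move) is false because of compounding — a single surviving set containing two of Maker's picks gains $w(\lambda^{2}-1)=w(\lambda-1)(\lambda+1)>2w(\lambda-1)$; the proposed slack $(1+q)^{1/p}-1\le \tfrac{q}{p}\cdot\tfrac{1}{1+q}$ is already false for $p=1$ (left side $q$, right side $q/(1+q)$); and the step "the $q$ largest marginal weights dominate a $\tfrac{q}{p+q}$-fraction of the total" is both unjustified and not the comparison that is needed.

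Here is the missing idea that closes the round (Breaker's $q$ picks, then Maker's $p$ picks). Let $D$ be the maximum danger of a free element \emph{after} Breaker's move. During Breaker's move dangers only decrease (sets die, weights do not change), so each of his $q$ greedy picks removes at least $D$, giving a decrease of at least $qD$. Now process Maker's picks one at a time: claiming $m_i$ increases $\Psi$ by $(\lambda-1)$ times the \emph{current} danger of $m_i$, and that current danger is at most $\lambda^{\,i-1}D$, since each surviving set through $m_i$ has had its weight inflated by a factor $\lambda$ for each of the at most $i-1$ earlier picks it contains. Summing, Maker's increase is at most $(\lambda-1)(1+\lambda+\cdots+\lambda^{p-1})D=(\lambda^{p}-1)D=qD$. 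The exact identity $\lambda^{p}-1=q$, i.e.\ the precise choice $\lambda=(1+q)^{1/p}$, is what makes the telescoping match Breaker's removal exactly; no approximate calculus bound of the kind you invoke can replace it (your own computation for the unbiased-style bound would give $p(\lambda-1)\lambda^{p-1}$, which already exceeds $q$ for $p=2,q=1$). With this amplification-versus-greedy comparison in place, the round is non-increasing and the rest of your outline (first-move factor $1+q$, and $\Psi\ge 1$ whenever Maker completes a set) is correct.
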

\noindent The proof, while certainly non-trivial, is  similar to that of Theorem \ref{ES-th}. There is an analog of Theorem \ref{Maker} for the biased case \cite{Beck82}, but we will not state it here.

Maker-Breaker games are bias monotone. By this we mean the following formal statement: if the $(m:b)$ Maker-Breaker game $(X, {\mathcal H})$ is Maker's win, then so is the $(m:(b-1))$ game. Maker just adapts his winning strategy for the $(m:(b-1))$ game, each time assigning an arbitrary fictitious $b$th element to Breaker after Breaker's move. This leads us to the following very important definition.
\begin{definition} \label{def::thresholdBias}
Let $(X,{\mathcal H})$ be a hypergraph such that $\min \{|A| : A \in {\mathcal H}\} \geq 2$. The unique positive integer $b_{\mathcal H}$ such that Breaker wins the Maker-Breaker $(1 : b)$ game $(X, {\mathcal H})$ if and only if $b \geq b_{\mathcal H}$ is called the \emph{threshold bias} of $(X, {\mathcal H})$.
\end{definition}
Determining or estimating the threshold bias of a game is a central goal of the theory of biased Maker-Breaker games, and is the main subject of this section. For the triangle game ${\mathcal H}_{K_3,n}$, it follows from our prior discussion that the threshold bias $b_{{\mathcal H}}$ is of order $\sqrt{n}$; determining its asymptotic value remains open.

Let us ask ourselves now: for natural biased games on the edge set of $K_n$, like positive minimum degree, connectivity, Hamiltonicity, etc., what are the values of the threshold bias? How do they compare between themselves? To the reader unexperienced in positional games these questions must appear very challenging, and even making an intelligent guess should be not so easy.

We start with Breaker's side. This is achieved through the following theorem of Chv\'atal and Erd\H{o}s \cite{CE78}.
\begin{theorem}\label{isol-vertex}
For every $\epsilon>0$, all large enough $n$ and $b\ge (1+\epsilon)\frac{n}{\ln n}$, Breaker can isolate a vertex in the $(1:b)$ Maker-Breaker game, played on $E(K_n)$.
\end{theorem}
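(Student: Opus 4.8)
The plan is to derive the theorem from the Box Game analysis of Theorem~\ref{Box-Maker}. Breaker will commit in advance to a set $W=\{v_1,\ldots,v_k\}$ of $k$ vertices (with $k$ to be chosen) and will aim to claim the entire star of some $v_i\in W$. The natural boxes are the \emph{bipartite} stars $A_i:=E(v_i,V\setminus W)$: they are pairwise disjoint and each has size $n-k$, so Breaker can play the role of BoxMaker with bias $b$ and Maker the role of BoxBreaker with bias $1$, a box $A_i$ counting as destroyed the moment Maker touches the vertex $v_i$. The one mismatch with a genuine Box Game is the internal edges of $W$: an edge $v_iv_j$ inside $W$ belongs to no box $A_\ell$, yet if Maker claims it he knocks out two boxes, $A_i$ and $A_j$, at once — a feature which, if not neutralized, forces a worse constant in the bias.

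To remove this mismatch I would have Breaker run a short preparatory phase. In Phase~1 Breaker spends his first $T=\lceil\binom{k}{2}/b\rceil$ moves claiming every internal edge of $W$ that is still free. Since one move of Maker destroys at most two boxes, at most $2(T+1)$ boxes are dead at the end of Phase~1; and for every box $A_i$ that survives, Maker has not touched $v_i$ at all, so in particular all $k-1$ internal edges of $W$ at $v_i$ are already Breaker's. In Phase~2 no internal edge of $W$ remains, so every Maker move now destroys at most one box, and the surviving boxes are still completely untouched, of size $n-k$; Breaker simply runs the leveling strategy of Theorem~\ref{Box-Maker} on the at least $k-2(T+1)$ surviving boxes. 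When he empties some $A_i$, he owns every edge from $v_i$ to $V\setminus W$ and, by Phase~1, every edge from $v_i$ inside $W$; hence $v_i$ is isolated and Breaker has won.

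It then remains to pick $k$ and check the inequalities. I would take $k=\lceil n^{1/(1+\epsilon/2)}\rceil$. Since $b\ge(1+\epsilon)n/\ln n$ and $1/(1+\epsilon/2)<1$, we have $k=o(b)$, hence $T\le\binom{k}{2}/b+1=o(k)$ and the number of surviving boxes is $(1-o(1))k$. By Theorem~\ref{Box-Maker}, Breaker wins Phase~2 as soon as
\[
n-k\le(b-1)\sum_{i=1}^{(1-o(1))k-1}\frac1i,
\]
and since $\sum_{i=1}^{m}1/i>\ln m$, the right-hand side is at least $(1-o(1))(b-1)\ln k\ge(1-o(1))\frac{(1+\epsilon)n}{\ln n}\cdot\frac{\ln n}{1+\epsilon/2}=(1-o(1))\frac{1+\epsilon}{1+\epsilon/2}\,n$, which exceeds $n-k$ for all large $n$ because $\frac{1+\epsilon}{1+\epsilon/2}>1$. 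Maker's one-move head start as the first player and all roundings are absorbed into this slack; alternatively, one may invoke bias monotonicity and let Breaker move first.

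The main obstacle — and the step I would write out most carefully — is precisely the Phase~1 accounting: one must simultaneously guarantee that clearing the internal edges is cheap ($T=o(k)$ rounds, so only a negligible fraction of boxes is lost and the rest stay full) and that the resulting Box Game is still a BoxMaker win although $W$ has been shrunk all the way down to size $n^{1/(1+\epsilon/2)}$, which is far below $b$. This is exactly where the hypothesis $b\ge(1+\epsilon)\frac{n}{\ln n}$, rather than merely $b\gtrsim\frac{n}{\ln n}$, is needed: the extra factor $1+\epsilon$ is what lets the logarithm $\sum 1/i\approx\ln k$ coming out of Theorem~\ref{Box-Maker} still beat $n$ after we have replaced $k=n^{1-o(1)}$ by $k=n^{1/(1+\epsilon/2)}$.
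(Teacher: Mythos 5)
Your proof is correct and follows essentially the same route as the paper: Breaker first secures every edge inside a set of vertices untouched by Maker, and then treats the pairwise disjoint stars from that set to the rest of the board as boxes, winning by Theorem~\ref{Box-Maker}. The only difference is in the preparatory phase --- the paper has Breaker adaptively grow a clique of size $b/2$ on vertices isolated in Maker's graph (adding two fresh vertices per round, losing at most one to Maker), whereas you fix a smaller set $W$ of size about $n^{1/(1+\epsilon/2)}$ in advance, clear its internal edges in $o(k)$ rounds and discard the $O(T)$ boxes Maker spoils meanwhile; both variants then reduce to the same Box Game calculation.
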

\begin{proof}  Breaker first builds a clique $C$ of size $b/2$ such that all vertices of $C$ are isolated in Maker's graph. In his turn $i$ he locates two isolated vertices $u_i,v_i$ in Maker's graph, claims $(u_i,v_i)$ and then joins them completely to the current $C$, claiming more edges if needed. Maker in his turn can touch only one vertex of the clique. At the second stage, Breaker's goal is to isolate one of the vertices of $C$. Observe that all edges inside $C$ have already been taken by Breaker, thus the only relevant edges are those between $C$ and its complement $V\setminus C$; moreover, the edge sets $E_v=\{(u,v): v\in V\setminus C\}\,,v\in C$, are disjoint. Thus we can appeal to the box game Box$(b,1;\{E_v: v\in C\})$, where Breaker disguises himself as BoxMaker. Applying Theorem \ref{Box-Maker} we derive that Breaker can claim all edges of some $E_v$, isolating $v$ and winning the game.
\end{proof}
We conclude that the threshold bias for all games on $K_n$, where all winning sets of Maker are spanning graphs of positive minimum degree, is at most $(1+o(1))n/\ln n$. This might appear as a rather humble beginning, but the truth is that for quite many of them this is a tight estimate!

We now switch to Maker's side. Consider first the connectivity game. Already Chv\'tal and Erd\H{o}s showed, probably quite surprisingly, that the threshold bias for this game is of asymptotic order $n/\log n$. Here we present an argument of Beck \cite{Beck82}, providing also a better multiplicative constant.
\begin{theorem}\label{con-Maker1}
The threshold bias $b_{{\mathcal C}_n}$ for the connectivity game ${\mathcal C}_n$ on $K_n$ satisfies: $b_{{\mathcal C}_n}\ge (1-o(1))\frac{n}{\log_2n}$.
\end{theorem}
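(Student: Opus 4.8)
The plan is to exhibit an explicit strategy for Maker in the $(1:b)$ connectivity game on $K_n$ that produces a connected spanning subgraph whenever $b \le (1-o(1))n/\log_2 n$. The natural approach, following Beck, is a two-phase strategy combined with a potential-function (Erd\H{o}s--Selfridge type) argument run from Breaker's point of view on an auxiliary hypergraph. First I would have Maker, over a constant fraction of the game, build many small connected components — say, spanning ``stars'' or small trees — so that after this phase Maker's graph consists of relatively few components, each of which is internally connected. The real work is to then merge all these components into one. For the merging phase, the key observation is that if Maker's current graph has components $C_1,\ldots,C_t$, then Maker needs, for every nontrivial cut, to claim at least one crossing edge; dually, Breaker is trying to claim \emph{all} edges across some cut $(S, V\setminus S)$. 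So Maker should think of himself as ``Breaker'' in a Maker-Breaker game on the hypergraph whose edges are the cuts of the component graph, and invoke the (biased) Erd\H{o}s--Selfridge criterion (Theorem~\ref{biasedES}, with the roles of the players and of $p,q$ swapped appropriately).

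The heart of the calculation is then the convergence of the relevant potential sum. After the first phase there are at most $t$ components; the number of cuts $S$ with $|S|=k$ (measured in components) is $\binom{t}{k}$, and the number of edges of $K_n$ across such a cut is at least roughly $k(n-\Theta(k))$ when the components are balanced, so each such ``winning set for Breaker'' has size $\gtrsim k n / t \cdot$ (something), or more carefully $\ge$ (product of the two side-sizes in vertices). One sets up the Erd\H{o}s--Selfridge sum $\sum_{S} (1+q)^{-|A_S|/p}$ with $p = b$ (the bias of the player Maker is impersonating as Breaker is now $1$, so $q=1$ there — one must be careful here which bias goes where) and checks it is below the threshold $1/(1+q)$. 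The point where $b$ can be taken as large as $(1-o(1))n/\log_2 n$ is exactly where this geometric-type sum, which looks like $\sum_k \binom{t}{k} 2^{-(\text{cut size})/b}$, stays bounded: the binomial $\binom tk$ contributes $2^{O(t\log t)}$ while the exponent saves roughly $2^{-k(n-k)/(tb)}$, and balancing these forces $b \lesssim n/\log_2 n$. Choosing the length of the first phase (equivalently, $t$) optimally as a slowly growing function of $n$ makes the $o(1)$ terms work out.

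The main obstacle, and the part requiring the most care, is the bookkeeping in the first phase: one must guarantee that after it Maker's graph not only has few components but that these components are ``balanced enough'' in vertex count for the cut-size lower bounds above to hold, all while Breaker has been allowed $b$ moves per round and could try to sabotage this by isolating vertices or keeping many tiny components alive. A clean way around this is to not insist on exact balance but to argue that Breaker \emph{isolating} vertices is precisely the obstruction bounded by Theorem~\ref{isol-vertex} (so below the $n/\ln n$ threshold Maker can maintain positive minimum degree), and then run the Erd\H{o}s--Selfridge merging argument on the coarser structure; any residual small components get absorbed because their associated cuts have large crossing-edge sets and hence negligible potential. The second, more technical, subtlety is keeping the two biases straight when applying Theorem~\ref{biasedES} with the identities of Maker and Breaker interchanged — the inequality one actually needs is $\sum_{S}(1+1)^{-|A_S|/b} < 1/2$, i.e.\ $\sum_{S} 2^{-|A_S|/b} < 1/2$, and it is this inequality whose verification pins down the constant $\frac{1}{\log_2 n}$ rather than $\frac{1}{\ln n}$.
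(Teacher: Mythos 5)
Your central instinct is the right one and matches the paper's argument: connectivity for Maker is equivalent to claiming at least one edge in every cut $[S,\bar S]$, so Maker should impersonate Breaker in the dual \emph{cut game} and one should apply the biased Erd\H{o}s--Selfridge criterion (Theorem \ref{biasedBreaker}) with $p=b$, $q=1$, i.e.\ verify $\sum_S 2^{-|A_S|/b}<1/2$; and the small cuts are indeed what pin the constant $1/\log_2 n$. However, your two-phase scaffolding is both unnecessary and, as proposed, broken. The paper runs the cut-game argument from move one on \emph{all} cuts of $K_n$: the winning sets are $A_S=[S,\bar S]$ with $|A_S|=k(n-k)$ for $|S|=k$, and the required inequality is $\sum_{k=1}^{n/2}\binom{n}{k}2^{-k(n-k)/b}<\tfrac12$, which holds for $b=(1-\epsilon)n/\log_2 n$ because for small $k$ one compares $\binom{n}{k}\approx 2^{k\log_2 n}$ against $2^{-k(n-k)/b}\approx 2^{-kn/b}$, and for $k=\Theta(n)$ the exponent $k(n-k)/b=\Theta(n\log n)$ swamps $\binom{n}{k}\le 2^n$. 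No preliminary component-building phase is needed, and in fact none can be afforded with the tools you invoke.

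Concretely, two steps in your proposal fail. First, your patch for the phase-1 bookkeeping misreads Theorem \ref{isol-vertex}: that theorem is a statement about \emph{Breaker's} power (he can isolate a vertex once $b\ge(1+\epsilon)n/\ln n$); it gives Maker no strategy whatsoever for maintaining positive minimum degree, let alone a balanced component structure. The Maker-side statement you would need is essentially Theorem \ref{mindeg-GS} of Gebauer and Szab\'o, a much harder and much later result that cannot be presupposed in a proof of this classical bound (and even it controls degrees, not the component balance your cut-size estimates require). Second, applying the Erd\H{o}s--Selfridge criterion only at the start of your merging phase is not legitimate as stated: Theorems \ref{ES-th} and \ref{biasedBreaker} are criteria evaluated on the untouched board, and after a constant fraction of the game Breaker has claimed $b$ edges per round which he may have concentrated in a single cut (e.g.\ around one vertex or one small component), making the corresponding potential term close to $1$ by itself; you would need a mid-game potential bound that your phase-1 strategy does not supply. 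Deleting phase 1 and running your own cut-game calculation on the full cut hypergraph of $K_n$ turns your sketch into the paper's proof.
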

\begin{proof} Let $\epsilon>0$, and fix $b=b(n)=(1-\epsilon)n/\log_2 n$. We prove that Maker wins ${\mathcal C}_n$ playing against bias $b$. Observe that in order for Maker to win, it is sufficient (and also necessary) to put an edge into every cut $[S,\bar{S}]$ for $\emptyset\ne S\ne [n]$. So we see another change of roles here: Maker plays as (Cut) Breaker in the {\em cut game}. The board of the cut game is $E(K_n)$, and the winning sets are exactly the cuts $A_S=[S,\bar{S}]$. Applying criterion (\ref{biasedES}), we need to verify that
$$
\sum_{S}2^{-|A_S|/b}=\sum_{k=1}^{n/2}\binom{n}{k}2^{-k(n-k)/b}<\frac{1}{2}\,,
$$
which can be done through standard asymptotic manipulations, omitted here.
\end{proof}
What is then the asymptotic value of the threshold bias for the connectivity game and several related games? Which constant should we put in front of $n/\ln n$? Erd\H{o}s, with an amazing foresight, suggested the following very surprising solution. Suppose both Maker and Breaker in their $(1:b)$ game on $E(K_n)$, instead of being utterly clever and using perfect strategies, play {\em randomly}. Then the resulting Maker's graph is a random graph on $n$ vertices with $m$ edges for $m=\left\lceil\binom{n}{2}/(b+1)\right\rceil$, i.e., a graph drawn from the probability distribution $G(n,m)$. This puts us in the realm of random graphs, a very developed field where the understanding was far ahead that of positional games. We do not dwell on the background and known results in the theory of random graphs, referring the reader instead to its standard sources \cite{Bol-rgbook,JLR}. The relevant results are those about the thresholds for positive
 minimum degree, connectedness, and Hamiltonicity in $G(n,m)$. All three properties are known to appear typically at $m^*=\frac{1}{2}n\ln n$ (much more precise statements are available). This would translate to the threshold bias $b^*\approx\binom{n}{2}/m^*=n/\ln n$ for the random game. Now, the {\em Erd\H{o}s paradigm}, or the random graph intuition, suggested that for some biased Maker-Breaker games, like the connectivity game, the threshold bias for the perfectly played games should be asymptotically the same as for the entirely different random games. This approach bridges between two seemingly unrelated fields --- positional games and random graphs, and indicates the very important role of probabilistic considerations in completely deterministic games. A very bold conjecture --- which has proven to be true!

Now we state three recent results that established asymptotically the threshold bias for the connectivity game, the minimum degree $c$ game, and the Hamiltonicity game. The first two theorems are due to Gebauer and Szab\'o \cite{GS09}, the third is due to the author \cite{K11}.
\begin{theorem}\label{con-GS}
For every fixed $\epsilon>0$ and all sufficiently large $n$, if $b=(1-\epsilon)\frac{n}{\ln n}$, then Maker wins the $(1:b)$ Maker-Breaker connectivity game ${\mathcal C}_n$.
\end{theorem}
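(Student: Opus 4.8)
The plan is to recognise the connectivity game as a disguised, dynamic version of the Box Game (Theorems~\ref{Box-Maker}, \ref{Box-Breaker}) and to read off the sharp constant from the harmonic-series asymptotics $\sum_{i\le n}1/i=\ln n+O(1)$ that govern that game. As in the proof of Theorem~\ref{con-Maker1}, Maker wins exactly when, at the end, every cut $[S,\bar S]$ of $K_n$ carries a Maker edge. It is convenient to let Maker play a self-imposed \emph{forest strategy}: he only ever claims edges joining two distinct components of his current graph, so his graph is always a forest, he wins precisely when it spans, and he loses precisely when at some moment the entire cut $[C,\bar C]$ of some component $C$ of his graph has been occupied by Breaker. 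This restriction is free: whenever Maker has not yet lost and still has at least two components, some component-cut contains a free edge --- otherwise that cut would be wholly Breaker's and Maker would already have lost --- so a merging move is always available. The reason the direct use of the biased Erd\H{o}s--Selfridge bound (Theorem~\ref{biasedBreaker}) in Theorem~\ref{con-Maker1} only reaches $b$ of order $n/\log_2 n$ is that it union-bounds over all $2^{n-1}$ cuts on an equal footing, while the true bottleneck is the $n$ singleton cuts, each of size merely $n-1$; the point is to exploit the structure of the cut system instead of union-bounding over it.

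Concretely, I would regard the components of Maker's forest as \emph{boxes}, the contents of a component $C$ being the still-free edges of $[C,\bar C]$. A Breaker move shrinks boxes; a Maker move destroys a box by claiming an edge out of some component $C$, which merges $C$ with a neighbour and produces a new, larger box. Breaker wins exactly if he empties a box before Maker destroys it, so this is essentially the game Box$(b,1;\,\cdot\,)$ with Breaker as BoxMaker and Maker as BoxBreaker, the worst starting configuration being $n$ boxes of size $n-1$, i.e.\ all components singletons --- exactly the isolated-vertex bottleneck. Accordingly the plan is to adapt the potential-function proof of Theorem~\ref{Box-Breaker}: Maker maintains a harmonic-type potential $\Psi$ measuring how close the live components are to having their cuts emptied, always spending his move on the currently most endangered one, and one checks that a full Breaker round inflates $\Psi$ by at most roughly $b$ divided by the number of live boxes, which Maker's merge offsets --- precisely the accounting behind the BoxBreaker threshold ``box size $> b\sum_i 1/i$''. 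With box size $n-1$ this reads $b\le(1-o(1))n/\ln n$, satisfied by $b=(1-\epsilon)n/\ln n$ for every fixed $\epsilon>0$ and all large $n$, which is the assertion.

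The hard part will be turning the ``it is just the Box Game'' slogan into a rigorous argument, because two features of the real game must be handled with care. First, the boxes overlap: an edge between components $C_i$ and $C_j$ lies in both $[C_i,\bar C_i]$ and $[C_j,\bar C_j]$, so one Breaker move can shrink two boxes at once. This does not actually help Breaker attain his goal --- emptying a single chosen box still costs essentially its full size, the incidental dents to the other boxes being one apiece spread over $\Theta(n)$ of them --- but $\Psi$ must be designed so as not to be deceived by it. Second, Maker's moves merge boxes rather than merely deleting them; since the cut of a union of two components is, up to their few mutual edges, larger than either cut alone, this only benefits Maker, yet $\Psi$ must record the correct post-merge value and certify that an endangered live component is always legally mergeable. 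The crux is therefore to choose a potential that simultaneously tracks the shrinking cuts of all live components, behaves correctly under overlaps and under merges, and still collapses to the clean harmonic estimate, so that the constant comes out to be exactly $1$ and not merely $\Theta(1)$. If one prefers, the ``large'' cuts --- those $[S,\bar S]$ with $|S|$ above a slowly growing threshold --- can be dispatched at the outset by Theorem~\ref{biasedBreaker}, which holds for them with vast room to spare, leaving the box-game argument to run only on the small components; but the small cuts are precisely where the difficulty sits and must be beaten by the combinatorial argument above.
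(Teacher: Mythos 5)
Your plan is pointed in the right direction, and in fact it matches the spirit of the proof the survey alludes to: the paper does not prove Theorem~\ref{con-GS} itself but refers to Gebauer and Szab\'o \cite{GS09}, noting only that Maker builds his graph directly (a forest, merging components) and that the argument rests on a ``cleverly devised potential function''. Your forest strategy, the identification of component cuts as boxes, and the harmonic accounting that should produce the constant $1$ are all the correct skeleton, and your observation that the self-imposed forest restriction is free is sound.

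However, as a proof the proposal has a genuine gap, and it sits exactly where you yourself place it: the potential function is never defined and the round-by-round accounting is never verified. The one quantitative claim you do make --- that ``a full Breaker round inflates $\Psi$ by at most roughly $b$ divided by the number of live boxes'' --- is false for any potential that, like the one in Theorem~\ref{Box-Breaker}, tracks the cuts of the live components individually: Breaker is free to spend all $b$ of his edges in a single round on the cut of one small component (say an isolated vertex), so the damage to the most endangered box in one round can be $b$, not $b$ divided by the number of boxes. The Box game proof survives this because its potential $\Psi(j)=\frac{1}{k-j+1}\sum_{i\ge j}c_i$ averages over the boxes that will be destroyed in the future, and because the boxes there are genuinely disjoint and static; in the connectivity game the boxes overlap (one Breaker edge dents two cuts), they merge under Maker's moves into boxes that are not the disjoint union of the old ones, and the set of ``live'' boxes is determined dynamically by Maker's own play. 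Dismissing the overlap as ``incidental dents spread over $\Theta(n)$ boxes'' and the merges as ``only beneficial'' is heuristics, not proof --- and since the theorem claims the sharp constant, even a factor-$2$ leak from the overlap would destroy the statement. The actual argument of \cite{GS09} resolves precisely these issues by a carefully chosen danger function for components together with an averaging analysis over the sequence of components Maker connects (this is the analogue of the backward summation in the Box game), and that construction and verification is the entire content of the proof; until you supply it, what you have is a correct reduction of the problem to its known hard core rather than a proof of the theorem.
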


\begin{theorem}\label{mindeg-GS}
For every fixed $\epsilon>0$ and every fixed  positive integer $c$, for all large enough $n$, if $b=(1-\epsilon)\frac{n}{\ln n}$, then Maker can build a spanning graph of minimum degree at least $c$ in the $(1:b)$ Maker-Breaker game played on $E(K_n)$.
\end{theorem}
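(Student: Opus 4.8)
The plan is to reduce the minimum-degree-$c$ game to a box game, and then to invoke (a variant of) the BoxBreaker side of the Box Game, Theorem~\ref{Box-Breaker}. First I would reformulate the goal: Maker builds a spanning subgraph of minimum degree at least $c$ if and only if he claims at least $c$ of the $n-1$ edges at every vertex $v$; dually, Breaker wins precisely when, for some vertex $v$, he manages to claim $n-c$ edges at $v$ before Maker owns $c$ of them (once $\deg_B(v)\ge n-c$, even grabbing all remaining free edges at $v$ leaves Maker with fewer than $c$). Call a vertex \emph{active} while Maker has fewer than $c$ edges at it. This is essentially a box game: the $n$ ``boxes'' are the stars $E_v=\{e\in E(K_n):v\in e\}$, Breaker plays the role of BoxMaker (bias $b$), who wants to deplete the free part of some active box below the survival threshold, while Maker plays BoxBreaker (bias $1$), who ``destroys'' a box $v$ once he owns $c$ edges at it. Two features distinguish this from the classical Box Game: the boxes overlap (each edge lies in exactly two of them), and BoxBreaker needs $c$ hits rather than one to kill a box.

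For the core case $c=1$ I would have Maker follow the BoxBreaker philosophy underlying Theorem~\ref{Box-Breaker}: on each turn he claims a free edge incident to a currently most endangered active vertex --- one of largest Breaker-degree --- and, whenever two such endangered active vertices are available, he joins them so as to finish two boxes at once. Correspondingly one tracks a leveling potential in the spirit of the function $\Psi(j)=\frac{1}{k-j+1}\sum_{i=j}^{k}c_i$ from the proof of Theorem~\ref{Box-Breaker}, now built from the Breaker-degrees of the most endangered active vertices, and one shows that under this strategy no active vertex ever reaches Breaker-degree $n-1$, provided $b\le(1-\epsilon)n/\ln n$. As in the Box Game, the harmonic sum $\sum_{i=1}^{n}1/i\sim\ln n$ is exactly what produces the constant $1$ in the bias; this matches the isolated-vertex threshold predicted by the random-graph heuristic and the upper bound for Breaker in Theorem~\ref{isol-vertex}.

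The step I expect to be the main obstacle is obtaining the sharp constant rather than merely the right order. A naive reduction loses a factor of $2$: a single Breaker edge $uv$ depletes two star-boxes simultaneously, so $b$ Breaker edges amount to up to $2b$ ``element removals'' in the box picture, and feeding $2b$ into Theorem~\ref{Box-Breaker} only yields $b<n/(2\ln n)$. Closing this gap requires exploiting the bipartite-type structure of the depletions: if Breaker pours his whole bias into one star $E_v$ he advances that single box fast, but the other endpoints of his edges are distinct vertices whose boxes each advance by only one, so Maker neutralises such a concentrated threat in a single move; the genuinely dangerous play for Breaker is a carefully balanced spreading, which is precisely the regime the box-game recursion controls tightly. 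Turning this into a potential that charges Breaker's moves correctly --- in essence the technical innovation of Gebauer and Szab\'o --- is where the real work lies. Once $c=1$ is in hand, a general fixed $c$ is comparatively routine: each box now needs $c$ hits, which shifts its survival threshold by the additive constant $c$ and forces Maker's scheduling to allow several returns to the same vertex; since $c$ is constant this affects only lower-order terms, so the threshold remains $(1-o(1))\,n/\ln n$. The standard normalisations (who moves first, parity of the number of free edges) add nothing essential.
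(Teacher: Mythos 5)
Your proposal sets up the right framework and correctly identifies both the reduction (Breaker wins iff he reaches $n-c$ edges at some vertex while Maker still has fewer than $c$ there) and the central obstacle: the stars $E_v$ overlap, so a single Breaker edge depletes two boxes, and feeding bias $2b$ into Theorem~\ref{Box-Breaker} only yields $b<(1-o(1))\frac{n}{2\ln n}$, half the claimed threshold. But at exactly that point the proposal stops: you write that constructing a potential ``that charges Breaker's moves correctly --- in essence the technical innovation of Gebauer and Szab\'o --- is where the real work lies,'' and you do not carry out that work. This is a genuine gap, not a routine verification: the whole content of Theorem~\ref{mindeg-GS} beyond the easy $n/(2\ln n)$ bound is precisely this step. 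The surveyed text itself does not reproduce the argument either; it refers to \cite{GS09} and only indicates that Maker plays toward his goal directly with a ``cleverly devised potential function.'' In the actual proof that potential is not a leveling average of Breaker-degrees in the Box-game style, but a danger function of the form $\mathrm{dang}(v)=d_B(v)-2b\,d_M(v)$ on active vertices; Maker always plays a free edge at the currently most dangerous active vertex, and the analysis tracks the average danger of the $i$ most dangerous active vertices at the moments Maker deactivates vertices, which is what prevents Breaker from profitably ``double-charging'' his edges and recovers the full constant $1$ in front of $n/\ln n$. None of this is recoverable from the sketch as written.

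Two smaller points would also need repair even within your outline. The suggested move of joining two endangered active vertices ``to finish two boxes at once'' need not be available, since Breaker may already own that particular edge, so the strategy must be stated (as in \cite{GS09}) as claiming an \emph{arbitrary} free edge at the selected vertex, with the gain coming from the accounting rather than from double kills. And the reduction of general fixed $c$ to $c=1$ is indeed only an additive perturbation, but that too is carried by the same potential analysis (a vertex stays active until Maker has degree $c$ there), not by a separate routine argument layered on top of a solved $c=1$ case.
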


\begin{theorem}\label{Ham-K}
For every fixed $\epsilon>0$ and all sufficiently large $n$, if $b=(1-\epsilon)\frac{n}{\ln n}$, then Maker wins the $(1:b)$ Maker-Breaker Hamiltonicity game played on $E(K_n)$.
\end{theorem}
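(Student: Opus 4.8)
The plan is to transplant into the game setting the now-standard route to Hamiltonicity of sparse graphs: have Maker build a spanning expander, and then close it into a Hamilton cycle by repeatedly adding \emph{boosters} via Pósa's rotation--extension method. Accordingly, Maker's strategy will have two parts --- an expander-building part and a booster-claiming part --- and one fixes $b=(1-\epsilon)\frac{n}{\ln n}$ exactly, which by bias monotonicity of Maker--Breaker games is no loss.

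\emph{Building the expander.} First I would have Maker play so as to guarantee that his graph $G_1$ is connected, has minimum degree at least $3$, and is an \emph{$(\alpha n,2)$-expander}, meaning $|N_{G_1}(S)|\ge 2|S|$ for every vertex set $S$ with $|S|\le\alpha n$, for a suitable small constant $\alpha=\alpha(\epsilon)>0$. It is convenient to recast this as a Maker-as-Breaker task: to each pair $(S,T)$ with $|T|=2|S|-1$ associate the ``winning set'' consisting of all edges of $K_n$ between $S$ and $V\setminus(S\cup T)$; if Maker (now in Breaker's role) claims an edge of every such set then $G_1$ is exactly a connected $(\alpha n,2)$-expander of minimum degree at least $3$. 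One cannot finish this with the biased Erd\H{o}s--Selfridge criterion (Theorem~\ref{biasedBreaker}): the family of dangerous sets contains far too many short winning sets (those coming from tiny $S$), so the sum in~(\ref{biasedES}) does not drop below the required threshold. What is needed instead is the finer, amortized potential argument underlying the Gebauer--Szab\'o analysis of the connectivity and minimum-degree games (Theorems~\ref{con-GS} and~\ref{mindeg-GS}); a careful version of it delivers connectivity, bounded minimum degree and expansion at all scales \emph{simultaneously}, and --- crucially --- spends only a $(1-\delta)$-fraction of Maker's roughly $\binom{n}{2}/(b+1)$ moves, for a constant $\delta=\delta(\epsilon)>0$, leaving Maker at least $n$ moves in reserve.

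\emph{Closing the Hamilton cycle.} Recall Pósa's lemma in the booster form: a connected $(\alpha n,2)$-expander on $n$ vertices that is not Hamiltonian has at least $(\alpha n)^2/2=\Theta(n^2)$ boosters, i.e.\ non-edges $e$ for which $G+e$ is Hamiltonian or has a strictly longer longest path. Since adding edges preserves connectivity and expansion, every graph Maker holds from the moment $G_1$ is complete until he wins is a connected good expander, hence --- as long as it is not yet Hamiltonian --- still has $\Theta(n^2)$ boosters. Maker simply claims, in each of his remaining moves, a still-free booster of his current graph; after at most $n$ such moves his graph contains a Hamilton cycle.

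\emph{Main obstacle.} Two points are genuinely delicate. The heart of the matter is the expander-building step: it is precisely the constant $1$ (rather than anything larger) in the bias that rules out the easy Erd\H{o}s--Selfridge route and forces one through the amortized potential strategy, and one must verify that a single such strategy controls minimum degree, connectivity and expansion of \emph{all} scales within a $(1-\delta)$-fraction of the budget. The second, subtler point is booster availability: one cannot merely postpone the booster stage to the end, since by then Breaker may own close to half of all edges of $K_n$, which dwarfs the $\Theta(n^2)$-size booster pool, so Maker might find no free booster. The remedy is to carry out the booster moves while Breaker's graph is still sparse relative to the booster pool --- interleaving them with the expander construction --- and reconciling this with the expander bookkeeping is where the real work lies. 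Combined with Theorem~\ref{isol-vertex} (for $b\ge(1+\epsilon)\frac{n}{\ln n}$ Breaker isolates a vertex and so wins), this pins the threshold bias of the Hamiltonicity game on $E(K_n)$ at $(1+o(1))\frac{n}{\ln n}$.
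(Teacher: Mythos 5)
Your blueprint (build a connected expander, then finish by repeatedly claiming P\'osa boosters) is indeed the skeleton of the argument in \cite{K11}, but the two places you flag as ``delicate'' are genuine gaps, and they are exactly where the content of the proof lies. For the expander-building step you assert that ``a careful version'' of the Gebauer--Szab\'o potential analysis (Theorems~\ref{con-GS}, \ref{mindeg-GS}) yields minimum degree, connectivity and vertex expansion at \emph{all} scales simultaneously, within a fraction of the budget. No such deterministic extension is supplied, and it does not follow from their argument, which controls degrees at designated dangerous vertices rather than the neighbourhoods of all small vertex sets. The actual proof circumvents this with an idea your proposal lacks: take the Gebauer--Szab\'o minimum-degree strategy, which directs Maker to claim an arbitrary free edge at a dangerous vertex $v$ while at least $\delta n$ free edges at $v$ remain, and replace ``arbitrary'' by a \emph{uniformly random} free edge at $v$. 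Against any fixed Breaker strategy this randomized Maker creates, with positive probability, a sufficiently strong expander in \emph{linearly many} moves; since the game is a finite perfect-information game, success with positive probability against every Breaker strategy implies the existence of a deterministic winning strategy. Waving at an unspecified amortized potential does not substitute for this randomization-plus-determinacy step.

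Second, your move accounting breaks the booster phase, as you half-acknowledge. If the expander construction consumes a $(1-\delta)$-fraction of Maker's roughly $\binom{n}{2}/(b+1)=\Theta(n\ln n)$ moves, then at the start of the booster phase Breaker owns close to $(1-\delta)\binom{n}{2}$ edges and may well have swallowed the entire booster pool of size $\Theta(\alpha^2n^2)$; having ``$n$ moves in reserve'' is irrelevant if no booster is free. Your proposed remedy of interleaving booster moves with the construction is not meaningful: boosters are defined only relative to an already-built connected expander, and each successive booster must be a booster of Maker's \emph{current} graph, so they are claimed adaptively one after another. The correct fix, used in \cite{K11}, is speed rather than interleaving: the randomized strategy produces the expander in $O(n)$ moves, so when the booster phase begins Breaker holds only $O(nb)=O(n^2/\ln n)=o(n^2)$ edges, and during the at most $n$ booster moves he gains only $o(n^2)$ more; hence a free booster always exists, and Maker wins within about $18n$ moves, while the board is still mostly empty. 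Your closing appeal to Theorem~\ref{isol-vertex} correctly identifies the matching upper bound on the threshold bias, but it plays no role in proving the stated theorem.
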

\noindent Recalling Theorem \ref{isol-vertex}, we conclude that the threshold bias for each of the three games above is asymptotic to $n/\ln n$, completely in line with the Erd\H{o}s paradigm!

We will not say much about the proofs of the above theorems, referring the reader instead to the original papers. Let us mention that the proofs of the first two theorems go back to basics --- Maker reaches his goal directly, instead of using dual approaches and descriptions as in the proof of Theorem \ref{con-Maker1}. Cleverly devised potential functions are used in both of the proofs. The third proof uses a modification of the second result and its proof. It turns out that for the minimum degree $c$ game, Maker has a strategy to reach degree $c$ at any vertex $v$ before Breaker accumulates $(1-\delta)n$ edges at $v$, for some $\delta=\delta(\epsilon)>0$. The strategy of Maker, as given by \cite{GS09}, points Maker to a vertex $v$ (specified by the current situation on the board) and tells him to claim an {\em arbitrary} free edge incident to $v$. The crucial twist is to use {\em randomness} here and to choose instead a {\em random} free edge at $v$, out of at least $\delta n$ edges available. One can argue that following this random strategy, with positive probability Maker can create a pretty strong expander in linearly many moves. At this point the deterministic nature of the game comes to our help --- the game considered is of perfect information, and thus winning with positive probability against a given strategy of Breaker means there is a deterministic (but unspecified) strategy to win. Returning to the Hamiltonicity game, Maker then quickly turns his expander into a connected graph and finally augments his connected expander to a Hamiltonian graph in a linear number of moves; here the proof uses fairly standard techniques from the theory of random graphs. Altogether, Maker wins the game in at most $18n$ moves, when the board is still mostly empty.

Now we cite another important result about biased Maker-Breaker games, due to Bednarska and \L uczak \cite{BL00}. For a given graph $H$, Maker wins the {\em $H$-game} played on the edges of a host graph $G$ if in the end he possesses a copy of $H$. For the case $H=K_3$ and $G=K_n$ we get the above treated triangle game. Define now the maximum 2-density $m_2(H)$ of $H$, a frequently used notation in random graphs, by
$$
m_2(H)=\max_{H_0\subseteq H, |V(H_0)|>2}\frac{|E(H_0)|-1}{|V(H_0)|-2}\,.
$$
\begin{theorem}\label{BL}
Let $H$ be a graph with at least three non-isolated vertices. The threshold bias for the  Maker-Breaker $H$-game on $E(K_n)$ satisfies: $b=\Theta\left(n^{1/m_2(H)}\right)$.
\end{theorem}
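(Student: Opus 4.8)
The plan is to establish the two matching bounds packaged in the estimate $b_{\mathcal H}=\Theta(n^{1/m_2(H)})$; throughout, write $v=v(H)$ and $e=e(H)$ for the number of non-isolated vertices and the number of edges of $H$. One has to show \textbf{(Breaker)} that there is $C=C(H)$ so that $b\ge Cn^{1/m_2(H)}$ forces a Breaker win in the $(1:b)$ $H$-game on $E(K_n)$, and \textbf{(Maker)} that there is $c=c(H)>0$ so that $b\le cn^{1/m_2(H)}$ forces a Maker win. The heuristic behind both sides is the Erd\H{o}s paradigm: if both players moved at random, Maker's graph would be essentially $G(n,m)$ with $m\approx\binom{n}{2}/b$, and a copy of $H$ appears in $G(n,m)$ around $m=\Theta(n^{2-1/m_2(H)})$, i.e., around bias $\Theta(n^{1/m_2(H)})$; the theorem says the perfectly played game obeys the same law.

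\textbf{Maker's bound, via the random strategy.} This is the conceptual core. I would have Maker play \emph{randomly}: in each move he draws an edge of $K_n$ uniformly at random, claiming it if it is still free and otherwise claiming an arbitrary free edge (a ``wasted'' move). Let Maker do this for $N:=\lceil Kn^{2-1/m_2(H)}\rceil$ rounds, $K=K(H)$ a large constant; since $b\le cn^{1/m_2(H)}$ with $c$ small relative to $1/K$, the game lasts far more than $N$ rounds (so this is legal), after $N$ rounds only an $O(cK)$-fraction of $E(K_n)$ is claimed, and Maker always draws from a pool of $(1-o(1))\binom{n}{2}$ edges. The argument has two parts. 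First, a \emph{derandomization}: it suffices to show that against \emph{every fixed} Breaker strategy this random play leaves a copy of $H$ in Maker's graph with probability bounded away from $0$. Indeed, the $H$-game is a finite perfect-information game and hence determined; so if Breaker had a winning strategy, Maker's random play would lose to it almost surely, a contradiction; therefore Maker has a (deterministic, unspecified) winning strategy --- the ``positive probability implies existence'' device used repeatedly in this survey. Second, the probabilistic estimate: fixing a Breaker strategy, let $M$ be Maker's final graph and let $X$ be the number of copies $H^\ast$ of $H$ in $K_n$ with $E(H^\ast)\subseteq M$; I would bound $\Pr[X\ge 1]$ from below by the second moment method over Maker's randomness. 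The fact making this work is that Breaker's moves, depending only on Maker's \emph{past} draws, are essentially uncorrelated with which edges Maker draws next: Breaker claims at most $bN\le cKn^2$ edges, each lying in $\Theta(n^{v-2})$ of the $\Theta(n^{v})$ copies of $H$, so he can preemptively spoil at most an $O(cK)$-fraction of all copies, and hence only an $O(cK)$-fraction of those Maker would otherwise build. Consequently the first two moments of $X$ come out, up to a factor $1-O(cK)$ and constants depending on $H$, the same as for the number of copies of $H$ in $G(n,N)$: $\mathbf E[X]=\Theta(K^{e}n^{\,v-e/m_2(H)})$, with nonnegative exponent since $m_2(H)\ge(e-1)/(v-2)\ge e/v$ (the last step using $2e\ge v$), so $\mathbf E[X]$ is large for $K$ large, and $\mathbf E[X^2]=O(\mathbf E[X]^2)$; here $m_2(H)$ reappears as the parameter pointing to the densest subgraph of $H$, whose copies govern the second moment. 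This yields $\Pr[X\ge 1]=\Omega(1)$, and the derandomization finishes the direction.

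\textbf{Breaker's bound.} Feeding the hypergraph of all copies of $H$ into the biased Erd\H{o}s--Selfridge criterion (Theorem~\ref{biasedBreaker}) is too weak: for $H=K_3$ it only guarantees a Breaker win for $b=\Omega(n^{3/2})$, whereas $\sqrt{n}$ suffices, because the real obstruction is the \emph{densest} subgraph of $H$, not $H$ itself. Instead I would let Breaker play a local, hierarchical blocking strategy extending the triangle strategy sketched earlier (degree-bound every Maker vertex, then kill every immediate threat). Concretely, Breaker maintains, for each proper subgraph $J$ of $H$, the invariant that Maker's graph contains at most $f(J)$ copies of $J$, with the weights $f$ chosen so that $f(H-g)<1$ for every edge $g$ of $H$ --- so Maker can never complete a copy of $H$ --- and so that the invariant survives a full round: a single Maker edge creates only boundedly many new partial copies at each level, and Breaker's $b$ fresh edges per round suffice to neutralize the dangerous ones. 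Unwinding this bookkeeping, the binding constraint is the level $J=H_0-g_0$, where $H_0\subseteq H$ with $v(H_0)\ge 3$ attains the maximum defining $m_2(H)$, and it is precisely there that one needs $b=\Theta(n^{(v(H_0)-2)/(e(H_0)-1)})=\Theta(n^{1/m_2(H)})$; this matches Maker's bound and so pins $b_{\mathcal H}$ to $\Theta(n^{1/m_2(H)})$.

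\textbf{The crux.} The delicate point is transferring the $G(n,m)$ appearance threshold into the Maker direction against an \emph{adaptive} opponent: one must make rigorous both that Breaker's blocking is nearly uncorrelated with Maker's future random draws and that the ensuing second-moment estimate for $X$ goes through, the tight regime being strictly $2$-balanced $H$ (such as $H=K_q$ or $H=C_k$), where the copies of $H$ sit most densely and the calculation calls for a Janson-type inequality rather than a crude union bound. On Breaker's side the corresponding burden is the correct choice of the weights $f(J)$ and the verification of the multi-level invariant --- a genuine, if routine-in-spirit, generalization of the elementary degree-control strategy that handles the triangle.
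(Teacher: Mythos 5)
The survey itself gives no proof of Theorem~\ref{BL} beyond the remark that Maker's side is played purely at random, so the comparison is really with the proof in \cite{BL00}. Your Maker-side plan (uniformly random play, then determinacy converting ``positive success probability against every fixed Breaker strategy'' into a deterministic win) is exactly the right skeleton. The gap is in the key estimate: from ``Breaker claims at most $cKn^2$ edges, each lying in $\Theta(n^{v-2})$ of the $\Theta(n^{v})$ copies of $H$'' you infer that he spoils only an $O(cK)$-fraction of the copies Maker would build. That inference is invalid against an adaptive Breaker: he does not spread his edges over all copies of $H$ in $K_n$, he concentrates them on the completions of the partial copies Maker has already drawn (this is precisely how he wins the triangle game with bias $2\sqrt n$, while owning only an $o(1)$-fraction of $E(K_n)$). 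So ``essentially uncorrelated'' cannot be the engine of the proof. The way \cite{BL00} gets around this is not a conditioned second-moment computation but a deterministic robustness statement about the unconditioned random graph: with probability close to $1$, the random graph with $m=\varepsilon n^{2-1/m_2(H)}$ edges cannot be made $H$-free by deleting \emph{any} $\delta m$ edges (proved by counting copies of $H$ and of its subgraphs and using a deletion-method argument); one then only needs the easy first-moment fact that, since Breaker ever owns a small fraction of $E(K_n)$, few of Maker's random draws land on Breaker's edges. Your sketch should be restructured around such a resilience lemma; as written, the step quoted above is the missing idea, not a routine verification.

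On Breaker's side you correctly diagnose that Theorem~\ref{biasedBreaker} applied to the copies of $H$ is too weak and that the subgraph $H_0$ attaining $m_2(H)$ must govern the bound, but what follows is a plan rather than a proof: the weights $f(J)$ are never defined, and the claim that ``a single Maker edge creates only boundedly many new partial copies at each level'' is false --- already in the triangle game one new Maker edge at a vertex of current Maker-degree $d$ creates $d$ new paths of length two, and Maker's degrees cannot be kept bounded --- so the asserted round-by-round maintainability of the invariant does not hold as stated. The known arguments (Chv\'atal--Erd\H{o}s for $K_3$, and the general upper bound in \cite{BL00}) have to work harder exactly here, controlling partial copies through a global weighting in which the bias enters with the correct exponents, and it is from that computation, not from a heuristic match, that $b=\Theta\bigl(n^{(v(H_0)-2)/(e(H_0)-1)}\bigr)$ emerges. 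So the Breaker direction, too, currently contains the answer but not a proof.
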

The proof of Maker's side uses a random strategy again, this time in the simplest possible form: Maker chooses each time a random edge to claim. This shows yet again that probabilistic considerations and arguments are ubiquitous in positional games, quite a surprising phenomenon.

Yet another connection between positional games and randomness was revealed in \cite{FKN14}, where Maker's win in certain biased games was achieved through Maker creating a random graph with few edges deleted at each vertex, and then invoking results about local resilience of random graphs \cite{SV08}.

We conclude this section with a very entertaining argument of Beck \cite{Beck02}, providing a lower bound for the Ramsey number $R(3,t)$ through biased Maker-Breaker games. The bound obtained is superseded by the best possible bound $R(3,t)=\Omega(t^2/\log t)$ of Kim \cite{Kim95} (see recent \cite{BK13}, \cite{FGM13} for better constants), but it matches the best bound known for long 35 years, obtained through various approaches \cite{Erdos61, S77, ESW95,K95}.
\begin{theorem}\label{Beck-Ramsey}
There exists a constant $c>0$ such that $R(3,t)\ge ct^2/\ln^2t$.
\end{theorem}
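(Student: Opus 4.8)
Recall that $R(3,t)>n$ is equivalent to the existence of a triangle-free graph $G$ on $n$ vertices with independence number $\alpha(G)<t$. The plan is to produce such a $G$, for $n=\lfloor c\,t^{2}/\ln^{2}t\rfloor$ with $c>0$ a small absolute constant, as the graph built by Maker in a suitably played biased Maker--Breaker game on $E(K_n)$, thereby turning a purely probabilistic existence statement into a (derandomizable) game-theoretic one --- exactly in the spirit of the Erd\H{o}s paradigm and of the random-strategy arguments mentioned above.

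Fix $b=\lceil 2\sqrt{n}\,\rceil$ (or a small constant multiple of $\sqrt n$) and consider the $(1:b)$ Maker--Breaker \emph{triangle game} on $E(K_n)$. Breaker follows a triangle-preventing strategy --- one exists for $b\ge 2\sqrt n$ by the analysis of the triangle game recalled above --- so that, \emph{whatever Maker does}, Maker's final graph $G_M$ contains no triangle. Maker plays the \emph{random strategy}: at each of his moves he claims a uniformly random currently free edge. This defines a random play; along it $G_M$ is always triangle-free and has $M:=\lceil\binom n2/(b+1)\rceil=\Theta(n^{3/2})$ edges (and, should one need it, about $\sqrt n$ of them at every vertex, since Breaker creates edges at a vertex $v$ essentially only in response to Maker's moves at $v$). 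It remains to show that, with positive probability, $G_M$ has no independent set of size $t$.

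For a fixed $t$-set $S$, note that $S$ is independent in $G_M$ if and only if Maker never claims one of the $\binom t2$ edges inside $S$. I would estimate the probability of this event and then union bound over the $\binom nt=\exp\!\big((1+o(1))\,t\ln t\big)$ choices of $S$. The core point is that \emph{throughout the game} a constant fraction of the $\binom t2$ internal edges of $S$ are still free: Breaker claims an internal edge of $S$ only when one of its endpoints was just played by Maker, so --- provided Breaker's triangle-preventing strategy is chosen to be \emph{locally well-spread} rather than clustering its moves inside small sets --- internal edges of $S$ disappear into Breaker's set only at a slow, steady rate, keeping at least, say, $\tfrac12\binom t2$ of them free for the first $\Theta(n^{3/2})$ rounds. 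Hence at each such Maker move the conditional probability of landing inside $S$ is $\Omega\!\big(\binom t2/\binom n2\big)=\Omega(t^{2}/n^{2})$, and summing over $\Theta(n^{3/2})$ moves gives an expected number of ``hits'' of $S$ of order $t^{2}/\sqrt n\asymp t\ln t/\sqrt c$. A standard concentration argument (an Azuma-type martingale inequality to absorb Breaker's adaptivity, or a coupling with independent sampling) then yields
\[
\Pr[\,S\text{ independent in }G_M\,]\ \le\ \exp\!\Big(-\Omega\big(t\ln t/\sqrt c\,\big)\Big),
\]
and for $c$ small enough this beats $\binom nt^{-1}$ with room to spare, so $\sum_S\Pr[S\text{ independent}]<1$. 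Therefore some realization of Maker's coin tosses yields a triangle-free $G_M$ with $\alpha(G_M)<t$; as the whole play is a legal play of a perfect-information game, there is even a deterministic Maker strategy achieving this against Breaker's fixed strategy. Either way such a $G_M$ exists, so $R(3,t)>n=\Theta\big(t^{2}/\ln^{2}t\big)$.

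The delicate step --- and the reason the exponent of $\ln t$ is exactly $2$ --- is the probabilistic estimate above, run against an \emph{adversarial} Breaker who is actively deleting $\Theta(\sqrt n)$ edges around each of Maker's recent vertices and could in principle ``shield'' a particular $t$-set. A crude worst-case accounting (letting Breaker pour a full burst of internal edges of $S$ every time Maker touches $S$) is far too lossy and only gives $\Pr[S\text{ independent}]\le e^{-O(\ln^{3}t)}$, which does not survive the union bound; one genuinely needs Breaker's triangle-preventing strategy to be locally well-spread, so that over the entire game it claims only an $O(\sqrt c)$-fraction of any fixed $t$-set's internal edges. Establishing the existence of such a strategy (triangle prevention at bias $2(1+o(1))\sqrt n$ \emph{together with} no clustering) and pushing the concentration estimate through the without-replacement dependencies is where essentially all the work lies; the arithmetic $M\cdot t^{2}/n^{2}\asymp t\ln t$, with $\sqrt n\asymp t/\ln t$, is exactly what pins the bound at $c\,t^{2}/\ln^{2}t$. (The endgame mop-up of the last few free edges is harmless, as it involves only $o(n^{3/2})$ edges.)
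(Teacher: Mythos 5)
Your overall architecture is sound and the arithmetic ($\sqrt n \asymp t/\ln t$, expected hits $\asymp t^2/\sqrt n \gg \ln\binom{n}{t}$) is the right bookkeeping, but the proof has a genuine gap exactly where you flag it, and flagging it does not close it. Your argument needs, for \emph{every} $t$-set $S$, that the triangle-preventing Breaker never depletes the interior of $S$ much faster than its proportional share, so that the conditional probability of your random Maker hitting $S$ stays $\Omega(t^2/n^2)$ through $\Theta(n^{3/2})$ rounds. The known triangle-blocking strategy (claim $b/2$ free edges at each endpoint of Maker's last edge) is underspecified about \emph{which} incident edges it takes, Breaker ends the game owning a $1-\Theta(1/\sqrt n)$ fraction of all of $E(K_n)$, and its moves are adaptively correlated with Maker's randomness; a crude accounting, as you note, already fails the union bound. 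So the existence of a triangle-preventing strategy at bias $\Theta(\sqrt n)$ that is simultaneously "locally well-spread" uniformly over all $\binom{n}{t}$ sets, together with a concentration argument robust to these dependencies, is the entire mathematical content of the theorem in your approach, and it is asserted rather than proved. As written, the proposal is an honest plan, not a proof.

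The missing idea in the paper's (Beck's) argument is that no randomness and no spreading lemma are needed: make \emph{both} players Breakers, in two different games. The bias-$b$ player (with $b=2\sqrt n$) plays the triangle-blocking strategy, so the bias-$1$ player's graph is triangle-free; the bias-$1$ player does not play randomly but plays as Breaker in the $(b:1)$ game whose winning sets are the edge sets of the $K_t$'s, $t=C\sqrt n\ln n$, i.e.\ his goal is to claim an edge inside every $t$-set. The biased Erd\H{o}s--Selfridge criterion (\ref{biasedES}) of Theorem \ref{biasedBreaker} gives him a deterministic winning strategy once $\binom{n}{t}2^{-\binom{t}{2}/b}<\tfrac12$, which holds for $C$ large. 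Each Breaker strategy wins against an arbitrary opponent, so the final partition of $E(K_n)$ has one triangle-free part and one $K_t$-free part, giving $R(3,t)>n=\Theta(t^2/\ln^2 t)$ with no analysis of how the triangle-blocker's edges are distributed. If you want to salvage your write-up, the cleanest fix is precisely this substitution: replace the random Maker and the union bound by the Erd\H{o}s--Selfridge potential argument for the $t$-set--hitting player.
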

\begin{proof} Set $b=2\sqrt{n}$. Imagine two players playing on the edges of $K_n$. The first player, taking $b$ edges at a time, thinks of himself as Breaker in the $(1:b)$ Maker-Breaker game, whose goal is to prevent Maker from claiming a triangle. The second player, claiming one edge per move, thinks of himself as Breaker in a different game, namely, the $(b:1)$ Maker-Breaker game, whose goal is to claim an edge in every vertex subset of cardinality $t=C\sqrt{n}\ln n$. The first player wins his game by our analysis of the triangle game. The second player is victorious too for large enough $C$ --- this can be derived by applying the biased Erd\H{o}s-Selfridge criterion (\ref{biasedES}) (the calculations are omitted). The result of the game, or perhaps of the games, is hence a partition of  $E(K_n)$ into two graphs, where the first graph is triangle-free, and the second graph does not have a clique of size $t$. It thus follows that $R(3,t)>n$.
\end{proof}

\section{Avoider-Enforcer games}
Recall the game of Sim described in Section \ref{sec1}. It has the interesting feature --- the player who occupies a winning set first actually loses. This is an example of {\em reverse}, or mis\'ere-type, games. Games of this type are the subject of this section. Reverse games are certainly interesting for their own sake, but for those who seek additional motivation to consider them, we now give an example of a Maker-Breaker game, where Maker relies on a reverse game to ensure his win.
\begin{theorem}\textnormal{\cite{HKSS08}}\label{MB-non-planar}
For every fixed $\epsilon>0$ and all sufficiently large $n$, if $b=(\frac{1}{2}-\epsilon)n$, then Maker wins the $(1:b)$ Maker-Breaker non-planarity game on $E(K_n)$.
\end{theorem}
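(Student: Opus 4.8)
The plan is to have Maker ignore the ``counting'' route to non-planarity (a graph with more than $3n-6$ edges is non-planar) --- Maker will own only about $\binom{n}{2}/(b+1)\approx n/(1-2\epsilon)$ edges in total, nowhere near enough, and Breaker will easily keep Maker's graph nowhere dense --- and instead build a \emph{topological} obstruction at linear scale. The natural target is a subdivision of $K_{3,3}$: a long cycle $C$ together with three internally disjoint paths joining three ``alternating'' pairs of vertices around $C$ (the six arcs of $C$ between consecutive endpoints plus the three crossing paths being the nine paths of the subdivision). To be able to route these three paths through $V(K_n)\setminus V(C)$, Maker should build not a bare cycle but a cycle sitting inside a well-connected subgraph; so I would restate the goal as: Maker claims a subgraph $H$ that is a good expander on $\Theta(n)$ vertices. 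Such an $H$ is non-planar outright: by the Lipton--Tarjan separator theorem every planar graph on $k$ vertices has a balanced separator of size $O(\sqrt k)$, whereas a graph with constant-factor vertex expansion on $k$ vertices has all balanced separators of size $\Omega(k)$ --- a contradiction once $k$ is large. (One can alternatively extract an explicit $K_5$- or $K_{3,3}$-subdivision from $H$ and invoke Kuratowski's theorem.) So it suffices to show that Maker, moving first in the $(1:b)$ game with $b=(\tfrac12-\epsilon)n$, has a strategy guaranteeing that his final graph contains such an expander.

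That this is the right target is exactly the Erd\H{o}s paradigm at work: playing randomly, Maker would end up with a graph distributed roughly as $G(n,m)$ with $m\approx\binom{n}{2}/(b+1)\approx n/(1-2\epsilon)$, i.e., at edge density comfortably above $1/2$ --- past the emergence of the giant component --- so that the $2$-core is a linear-sized expander and $G(n,m)$ is non-planar with high probability; the value $b=(\tfrac12-\epsilon)n$ is precisely what keeps $m$ above the giant-component threshold with room to spare. To turn this into a deterministic strategy I would follow the template used for the Hamiltonicity game (Theorem \ref{Ham-K}): Maker maintains a growing set of ``active'' vertices, at each turn claims an edge --- chosen partly at random --- that extends and interconnects the active set while preserving expansion, and shows that \emph{with positive probability} against any fixed Breaker strategy he retains a linear-sized expander; since the game has perfect information, winning with positive probability against every strategy yields a deterministic winning strategy. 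The point that makes a linear bias $b=\Theta(n)$ affordable --- in contrast to the Hamiltonicity game, where the bias must be $O(n/\ln n)$ --- is that here the expander need only span a constant fraction of $V(K_n)$, not all of it.

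The main obstacle, and the place where I expect a \emph{reverse} (Avoider--Enforcer) sub-game to enter --- which also explains why this theorem is placed here --- is controlling Breaker's considerable power: with $(\tfrac12-\epsilon)n$ edges per move Breaker can isolate any prescribed bounded set of vertices in $O(1)$ rounds and can locally ``planarize'' (pinch off) any part of Maker's graph that he manages to corner, so Maker must never commit to a small set of key vertices and must keep a large reservoir of interchangeable options alive throughout. I would phrase the key lemma as: Breaker, in his attempt to keep Maker's graph planar, is forced into the role of the avoider in a misère-type game whose ``losing sets'' are the small separating configurations, and an Avoider--Enforcer / Box-game analysis --- in the spirit of the box-game reduction in the proof of Theorem \ref{isol-vertex}, where Breaker disguises himself as BoxMaker --- shows that he cannot escape it. Quantifying the expansion Maker is guaranteed to retain after all this is, I expect, the technical heart of the argument; everything downstream (extracting the $K_5$- or $K_{3,3}$-subdivision, or simply invoking the separator theorem) is then routine.
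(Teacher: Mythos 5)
Your proposal has a genuine gap, and it also dismisses too quickly the route that actually works. You reject the ``counting'' route because Maker's $\approx\binom{n}{2}/(b+1)\approx\frac{n}{1-2\epsilon}$ edges fall far short of $3n-6$; but the bound $3n-6$ is not the relevant one. The paper's proof uses the girth-refined Euler bound: a graph on $n$ vertices with no cycle shorter than $k$ and more than $\frac{k}{k-2}(n-2)$ edges is non-planar, and $\frac{k}{k-2}\to 1$ as $k$ grows. Since $\frac{n}{1-2\epsilon}=(1+\Theta(\epsilon))n$, Maker only needs to accumulate $(1+\alpha)n$ edges (with $\alpha=\frac{\epsilon}{1-2\epsilon}$) while \emph{avoiding} cycles of length less than a suitable constant $k$; keeping girth large is easy because after $(1+\alpha)n$ moves the board still has $\Theta(n^2)$ free edges, so Maker can always pick an edge that closes no short cycle and keeps degrees below $n^{1/k}$. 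Note also that this is the real reason the theorem sits in the Avoider--Enforcer section: it is \emph{Maker} who plays an avoider's role (avoiding short cycles), not Breaker being forced into one, so your proposed ``key lemma'' inverts the intended reduction.

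The substantive gap in your alternative route is that its central claim is never established: you assert that Maker can build a linear-sized expander against bias $b=(\tfrac12-\epsilon)n$ by following the random-strategy/positive-probability template of the Hamiltonicity game, but that template is only known to operate at bias $O(n/\ln n)$, where Maker has $\Omega(\log n)$ free choices at the relevant vertices; against a linear bias Breaker can saturate any fixed vertex in $O(1)$ rounds, and nothing in your sketch shows how Maker preserves expansion (you yourself defer this as ``the technical heart''). The subsequent appeal to a misère sub-game and a box-game analysis with Breaker as BoxMaker is not a reduction but a hope; no losing sets are specified and no criterion is applied. The downstream steps (Lipton--Tarjan or a Kuratowski subdivision from an expander) are fine, but without the expander-building lemma the proof does not go through, whereas the paper's avoidance-plus-girth argument settles the theorem in a few lines.
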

\begin{proof} It follows  from Euler's formula that for  $k\ge 3$, if a graph $G$ on $n$ vertices has more than $\frac{k}{k-2}(n-2)$ edges and no cycles of length shorter than $k$, then $G$ is non-planar. Let $\alpha=\frac{\epsilon}{1-2\epsilon}$, and let $k$ be the smallest integer satisfying $1+\alpha>\frac{k}{k-2}$. In order the ensure his final graph is non-planar, it suffices for Maker to  {\em avoid} creating cycles shorter than $k$ in his first $(1+\alpha)n$ moves. This is an easy task --- the board will still have $\Theta(n^2)$ free edges after this number of moves, due to our choice of $\alpha$. Maker always chooses his next edge so as not to close a cycle of length less than $k$ and not to create a vertex of degree at least $n^{1/k}$; showing this is a feasible strategy is an easy exercise.
\end{proof}

We now define Avoider-Enforcer games formally. Let $a$ and $b$ be positive integers, and let $(X,{\mathcal H})$ be a hypergraph. In the {\em biased $(a:b)$ Avoider-Enforcer game} $(X,{\mathcal H})$ the two players are Avoider and Enforcer, with Avoider moving first and claiming exactly $a$ free elements in each turn, while Enforcer claims exactly $b$ free elements in each turn. Enforcer wins the game if he forces Avoider into claiming fully one of the sets $A\in{\mathcal H}$, and Avoider wins otherwise. The members $A$ of the game hypergraph ${\mathcal H}$ are sometimes called {\em losing sets}, to reflect the nature of the game.

Having seen the important role of the bias monotonicity and the threshold bias in Maker-Breaker games, we can hope that Avoider-Enforcer games behave similarly. However, this is pretty much {\em not} the case, as has been observed in \cite{HKS07}. Consider the following simple example: the game hypergraph ${\mathcal H}$ consists of two disjoint sets of size 2 each. It is immediate to check that for $a=b=1$ Avoider is the winner, for $a=1,b=2$ Enforcer wins, and for $a=b=2$ Avoider wins again; the example can easily be generalized to larger sets and bias numbers. This {\em lack of monotonicity} is a fairly disturbing feature, which prompted the authors of \cite{HKSS10} to adjust the definition in the following, rather natural, way: now Avoider claims {\em at least} $a$ elements in each turn, while Enforcer claims {\em at least} $b$ elements. This version is easily seen to be bias monotone, and for this reason we call this set of rules {\em monotone rules}, while the former set of rules is called {\em strict rules}. Each monotone Avoider-Enforcer game ${\mathcal H}$ has the threshold bias $b^{\normalfont\text{mon}}_{{\mathcal H}}$, which is the largest non-negative integer $b$ for which Enforcer wins the corresponding $(1:b)$ game. For strict rules, we can define instead the {\em lower threshold bias} $b^-_{{\mathcal H}}$ as the largest integer such that Enforcer wins the $(1:b)$ game for every $b\le b^-_{{\mathcal H}}$, and the {\em upper threshold bias} $b^+_{{\mathcal H}}$ as smallest integer such that Avoider wins the $(1:b)$ game for every $b> b^+_{{\mathcal H}}$.

Just like for Maker-Breaker games, determining or estimating the threshold bias(es) is a central task for Avoider-Enforcer games, under both sets of rules. The difference is that here the situation is frequently much more challenging, and our understanding of Avoider-Enforcer games does not quite match that of their Maker-Breaker counterparts.

As a warm-up example, consider the game ${\mathcal H}_{P_2,n}$, where Avoider aims to avoid creating a copy of the path $P_2$ of two edges. The biases for this game are as follows \cite{HKSS10}: $b^+_{{\mathcal H}_{P_2,n}}=\binom{n}{2}-2$, $b^-_{{\mathcal H}_{P_2,n}}=\Theta\left(n^{3/2}\right)$ and $b^{\normalfont\text{mon}}_{{\mathcal H}_{P_2,n}}=\binom{n}{2}-\left\lfloor\frac{n}{2}\right\rfloor-1$.

One may be tempted to think that for Avoider-Enforcer games on a game hypergraph ${\mathcal H}$, the monotone threshold bias $b^{\normalfont\text{mon}}_{{\mathcal H}}$ is always between the lower and the upper threshold biases  $b^-_{{\mathcal H}}$ and  $b^+_{{\mathcal H}}$. This is not the case as we will see shortly.

We now state several results obtained for both game rules. Let us start with the strict rules.
\begin{theorem}\textnormal{\cite{HKS07}}\label{AE-strict-conn}
For the Avoider-Enforcer connectivity game ${\mathcal C}_n$ on $E(K_n)$, played under strict rules, we have $b^-_{{\mathcal C}_n}=b^+_{{\mathcal C}_n}=\left\lfloor\frac{n-1}{2}\right\rfloor$.
\end{theorem}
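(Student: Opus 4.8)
The plan is to establish the two equalities $b^-_{{\mathcal C}_n}=b^+_{{\mathcal C}_n}=\left\lfloor\frac{n-1}{2}\right\rfloor$ by proving, under strict rules, two complementary statements: (i) Enforcer wins the $(1:b)$ connectivity game whenever $b\le\left\lfloor\frac{n-1}{2}\right\rfloor$, and (ii) Avoider wins whenever $b\ge\left\lfloor\frac{n-1}{2}\right\rfloor+1$. Since strict-rules Avoider-Enforcer games need not be monotone, both directions must be handled for all admissible $b$ in the stated range rather than merely at the threshold; but here the numerology works out cleanly, so the two bounds will pinch to a single value, and in particular $b^-=b^+$.

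\textbf{Enforcer's side.} The starting observation is that Avoider loses the connectivity game precisely when the graph he has claimed by the end spans a connected subgraph of $K_n$, i.e. contains a spanning tree, hence has at least $n-1$ edges. Avoider's total number of edges at the end is $\left\lceil\frac{\binom{n}{2}}{b+1}\right\rceil$ if he moves first and both biases are fixed; for $b\le\left\lfloor\frac{n-1}{2}\right\rfloor$ this count comfortably exceeds $n-1$, so a pure edge-count cannot save Avoider — he is forced to take many edges and the question is whether Enforcer can steer him into a connected graph rather than a forest with isolated vertices. The natural strategy for Enforcer is a \emph{connectivity-preserving blocking} strategy: Enforcer claims his $b$ edges each turn so as to shrink the set of ``safe'' moves available to Avoider, forcing Avoider's graph to merge components. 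Concretely, I would have Enforcer maintain the invariant that after his move the free edges between distinct components of Avoider's graph are scarce enough that Avoider, in his next move, must either connect two components or (eventually) must touch an isolated vertex; a clean way to implement this is for Enforcer to always claim all free edges inside the largest component of Avoider's graph (or inside a fixed large vertex set he is building), in the spirit of the Box-game reduction used in Theorem~\ref{isol-vertex}. One shows by the recursion on the sizes $a_i$ appearing in Theorem~\ref{Box-Maker} that with bias $b\le\left\lfloor\frac{n-1}{2}\right\rfloor$ Enforcer (playing a BoxMaker-type role) can exhaust the relevant box, which here translates to: Avoider cannot keep any vertex isolated and cannot keep two components apart, so at the end Avoider's graph is connected and he loses.

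\textbf{Avoider's side.} For $b\ge\left\lfloor\frac{n-1}{2}\right\rfloor+1$ Avoider must guarantee that his final graph is disconnected. The clean strategy is for Avoider to fix in advance a vertex $v$ (or, if $n$ is even, a perfect matching / near-perfect structure), and play so as to claim \emph{no} edge incident to $v$; equivalently, he confines all of his moves to $E(K_n\setminus v)$, which has $\binom{n-1}{2}$ edges. The $n-1$ edges incident to $v$ form $n-1$ ``forbidden'' elements, and in each round the board loses $1+b\ge\left\lfloor\frac{n-1}{2}\right\rfloor+2$ elements total while Avoider needs only one legal (i.e. $v$-avoiding) free edge to exist at his turn. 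A short counting argument — the number of $v$-incident edges claimed by Enforcer after $j$ rounds is at most $jb$, and one checks this can never force all remaining free edges to be $v$-incident before Avoider has exhausted his safe moves — shows Avoider can always obey the restriction, so $v$ stays isolated and Avoider wins. This is a pairing-style / resource-counting argument and the arithmetic is exactly what pins the upper threshold at $\left\lfloor\frac{n-1}{2}\right\rfloor+1$.

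\textbf{Main obstacle.} The Avoider side is essentially bookkeeping once the right restricted board is chosen; the delicate part is the Enforcer side — one must argue that Enforcer can \emph{force} a connected graph, not merely that Avoider's graph is dense. The subtlety is that Avoider, moving first and with the smaller bias, has a lot of freedom and can try to keep a single vertex isolated while dumping all his forced edges into one big clique. The key step, and the hardest to get exactly right, is showing that the Box-game analogy is tight at $b=\left\lfloor\frac{n-1}{2}\right\rfloor$: that Enforcer's best response (fill in the large component, isolating as few vertices as possible) genuinely overwhelms Avoider's attempt to protect a vertex, with the threshold falling precisely at $\left\lfloor\frac{n-1}{2}\right\rfloor$ rather than off by a constant. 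Matching the constants on the two sides — so that $b^-$ and $b^+$ coincide — is where the careful parity analysis (the floor function, the distinction between $n$ even and odd) is unavoidable, and I would expect the bulk of the real work to sit there.
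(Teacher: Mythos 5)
Your overall framing (Enforcer wins for every $b\le\left\lfloor\frac{n-1}{2}\right\rfloor$, Avoider for every $b\ge\left\lfloor\frac{n-1}{2}\right\rfloor+1$) is the right one, but both halves have problems. On Avoider's side the paper needs no strategy at all: under strict rules Avoider ends the game with exactly $\left\lceil\binom{n}{2}/(b+1)\right\rceil$ edges, which equals $n-1$ at $b=\left\lfloor\frac{n-1}{2}\right\rfloor$ and is at most $n-2$ for every larger $b$; with fewer than $n-1$ edges his graph cannot contain a spanning tree, so he wins however he plays. Your vertex-isolation strategy is not only unnecessary, its supporting count is false: if Enforcer spends all his moves on edges of $K_n-v$, the $\binom{n-1}{2}$ ``safe'' edges are exhausted while all $n-1$ edges at $v$ are still free, and since the game still has roughly $(n-1)/(b+1)\ge 1$ rounds left, Avoider (who must claim a free edge every turn) is then forced onto $v$. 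So the invariant ``$v$ stays isolated'' cannot be maintained, and the arithmetic of that strategy has nothing to do with where the threshold actually falls; the threshold comes from the total edge count alone.

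The genuine gap is Enforcer's side, which is exactly the part you flag as delicate but for which you offer only a Box-game analogy. That analogy does not work: the relevant blocking structures (cuts separating Avoider's components, or the stars of untouched vertices) are neither pairwise disjoint nor fixed in advance, so the recursion of Theorem~\ref{Box-Maker} does not apply, and ``claim all free edges inside Avoider's largest component'' does not prevent Avoider from finishing with two nontrivial components or an untouched vertex. Note how tight the task is: at $b=\left\lfloor\frac{n-1}{2}\right\rfloor$ Avoider ends with exactly $n-1$ edges, so Enforcer must force Avoider's graph to be precisely a spanning tree --- every single Avoider edge must reduce his number of components --- and your proposed invariant guarantees nothing of the sort. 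The paper's route is entirely different: Enforcer's side follows from the Avoider--Enforcer analogue of Lehman's theorem (Theorem~\ref{AE-Lehman}) together with the fact that $K_n$ contains $\left\lfloor\frac{n}{2}\right\rfloor$ edge-disjoint spanning trees; for even $n$ this already gives $b\le\frac{n}{2}-1=\left\lfloor\frac{n-1}{2}\right\rfloor$, and the odd case requires an additional refinement in \cite{HKS07}. To repair your proof you would need either to prove such a spanning-tree-packing criterion or to supply a complete explicit Enforcer strategy; the sketch as written does neither.
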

This is a very unusual result in the sense that is provides the {\em exact} value of the threshold biases. Avoider's side $b^+_{{\mathcal C}_n}$ is trivial --- if Avoider ends up with less than $n-1$ edges, he just cannot create a spanning tree and is thus guaranteed to win. Enforcer's side uses the well-known fact that $K_n$ contains $\lfloor n/2\rfloor$ edge-disjoint spanning trees and the following theorem.
\begin{theorem}\textnormal{\cite{HKS07}}\label{AE-Lehman}
If $G$ contains $b+1$ pairwise edge-disjoint spanning trees, then Enforcer wins the $(1:b)$ Avoider-Enforcer connectivity game played on $E(G)$ under strict rules.
\end{theorem}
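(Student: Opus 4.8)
The plan is to prove Theorem~\ref{AE-Lehman} as the Avoider--Enforcer counterpart of Lehman's Theorem~\ref{Lehman_th}, by induction on $n=|V(G)|$ using the same edge-contraction device. Fix $b+1$ pairwise edge-disjoint spanning trees $T_0,\dots,T_b$ of $G$. At any moment of the game, let $\mathcal P$ denote the partition of $V(G)$ into the connected components of Avoider's current graph, and let $G/\mathcal P$ be the multigraph obtained by contracting each part to a vertex (keeping all non-loop edges). Enforcer's goal is to maintain the invariant that \emph{the currently free edges of $G/\mathcal P$ contain $b+1$ pairwise edge-disjoint spanning trees of $G/\mathcal P$}; this holds at the start with the $T_i$ themselves. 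If Enforcer keeps this invariant until the board is empty then, since a spanning tree of a $k$-vertex graph has $k-1$ edges and no free edges remain, $G/\mathcal P$ is a single vertex, i.e.\ Avoider's graph is spanning and connected, so Enforcer has won.

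The crux of the strategy is the reply to a ``productive'' Avoider move: suppose Avoider claims an edge $e$ lying in one of the maintained trees, say $T_j$. Such an $e$ joins two distinct parts $C_1,C_2$ of $\mathcal P$, which now merge, decreasing the number of vertices of $G/\mathcal P$ by one. In the new quotient, $T_j\setminus\{e\}$ is still a spanning tree, while for each $i\ne j$ the tree $T_i$, after identifying $C_1$ with $C_2$, either loses a (now loop) direct $C_1$--$C_2$ edge and stays a spanning tree, or acquires a unique cycle along the former $C_1$--$C_2$ path of $T_i$. In the latter case Enforcer deletes one cycle edge $g_i$ from $T_i$; in the former he may take the loop edge. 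Either way this designates one edge of $T_i$ per $i\ne j$, i.e.\ exactly $b$ edges, all still free (they lie in the trees); Enforcer claims exactly these $b$ edges with his move. The resulting $b+1$ trees are pairwise edge-disjoint spanning trees of the smaller quotient lying in its free edges, so the invariant is restored and we recurse. This is precisely the dual of the contraction step in Lehman's proof, with $b$ trees in the role of $T_2$ there --- and this is where the hypothesis of $b+1$ (rather than $2$) trees is used.

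When $G$ is \emph{exactly} an edge-disjoint union of $b+1$ spanning trees (so $|E(G)|=(b+1)(n-1)$) this already completes the proof: the free non-loop edges of $G/\mathcal P$ are always precisely the union of the maintained trees, so Avoider is \emph{forced} to claim a tree edge each turn, Enforcer's reply also mops up every loop created by the merge, no free loop ever persists, and after exactly $n-1$ rounds the board is empty and $G/\mathcal P$ is a point; note also that $(b+1)\mid|E(G)|$, so there is no awkward partial final move under the strict rules. Combining this with the classical fact that $K_n$ contains $\lfloor n/2\rfloor$ edge-disjoint spanning trees (which is tight and uses all of $E(K_n)$ when $n$ is even) recovers Enforcer's side of Theorem~\ref{AE-strict-conn}.

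The main obstacle is the general case $|E(G)|>(b+1)(n-1)$, where Avoider may claim edges outside the maintained trees: a wasted move inside one part of $\mathcal P$, or worse, an off-tree edge joining two parts, which merges $\mathcal P$ without Avoider having spent a tree edge --- forcing \emph{all} $b+1$ trees to acquire a cycle, which Enforcer's $b$ moves cannot repair in a single turn. To deal with this one must either reduce to the tight case (e.g.\ fix a union $G_0$ of $b+1$ spanning trees, have Enforcer follow the above strategy on $G_0$ and use his remaining moves to ``burn'' the surplus edges $E(G)\setminus E(G_0)$, checking that enough surplus edges are always available --- which is clean when $b+1$ divides the surplus, as it does for $K_n$), or strengthen the invariant with a potential term measuring the slack $|\{\text{free edges of }G/\mathcal P\}|-(b+1)(|V(G/\mathcal P)|-1)$ and argue that Enforcer can always locate $b$ ``neutral'' edges to claim while keeping the slack nonnegative. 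Pinning down this accounting, together with the multigraph artifacts (parallel edges and loops created by repeated contraction) and the possibly partial last move under strict rules, is where the real work lies.
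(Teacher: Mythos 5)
You should know at the outset that the survey contains no proof of Theorem~\ref{AE-Lehman} --- it is quoted from \cite{HKS07} --- so your argument has to stand on its own, and as it stands it proves strictly less than the theorem. What you actually establish is the special case in which $G$ is \emph{exactly} the edge-disjoint union of $b+1$ spanning trees. That part is correct and is a genuine Avoider--Enforcer mirror of Lehman's contraction argument for Theorem~\ref{Lehman_th}: in the tight case every free edge is a non-loop tree edge of the quotient, so each Avoider move merges two of his components, and the merge creates exactly one loop or one cycle in each of the other $b$ trees, so Enforcer's $b$ claims restore the invariant and no free loop survives. But the theorem asserts Enforcer's win for every $G$ \emph{containing} $b+1$ edge-disjoint spanning trees, and that general case is the real content: strict Avoider--Enforcer games are not bias monotone (the survey stresses this), so winning at the tight bias on the tight board says nothing about, e.g., $K_n$ with $b=1$, where the surplus is enormous. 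For the same reason your closing claim that the tight case ``recovers Enforcer's side of Theorem~\ref{AE-strict-conn}'' is an overstatement --- it covers only even $n$ at the single bias $b=n/2-1$, not all $b\le\lfloor (n-1)/2\rfloor$, and not odd $n$ at all.

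The difficulties you defer are exactly where the theorem lives, and your two sketched fixes do not resolve them. If Avoider claims a connecting edge outside all maintained trees, contraction puts a cycle into all $b+1$ trees while Enforcer has only $b$ claims; the ``play on a fixed union $G_0$ and burn the surplus'' idea can only work if Enforcer refrains from contracting on Avoider's surplus edges (tracking only the partition generated by Avoider's tree edges), a point your sketch does not make, and even then it needs $b$ free surplus edges at every such round --- which fails when $b+1$ does not divide the surplus, and your parenthetical ``as it does for $K_n$'' is simply false (for $K_6$ and $b=1$ the surplus is $15-2\cdot 5=5$, not divisible by $2$), so the partial-last-move and shortfall rounds under strict rules are not a side remark but a genuine obstruction. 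The slack-potential variant has the same soft spot: when the slack is between $1$ and $b$ and the spare free edge is a loop inside an Avoider component, Avoider claims it and Enforcer's forced $b$ claims must destroy bridges of the maintained structures. In addition, a single merge can create many free loops at once (all free surplus edges between the two merged parts), far more than Enforcer can mop up at rate $b$ per round, so the ``no free loop ever persists'' feature of the tight case does not carry over. Until this accounting is actually carried out (or replaced by the argument of \cite{HKS07}), the proposal is a correct proof of a special case plus an honest description of the open part, not a proof of the theorem.
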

This theorem can be considered to be the Avoider-Enforcer analog of Lehman's Maker-Breaker Theorem \ref{Lehman_th}. Peculiarly enough, the biased Maker-Breaker version of Lehman does not go through: for any $k$ one can construct an example of a graph $G$ with $k$ edge-disjoint spanning trees, where Breaker wins the $(1:2)$-connectivity game. Theorem \ref{AE-Lehman} also readily implies the following result.
\begin{theorem}\textnormal{\cite{HKS07}}\label{AE-strict-k-conn}
For the $k$-connectivity Avoider-Enforcer game ${\mathcal C}^k_n$ on $K_n$, $k\ge 2$, one has: $\frac{n}{2k}\le b^-_{{\mathcal C}^k_n}\le b^+_{{\mathcal C}^k_n}\le\frac{n}{k}$.
\end{theorem}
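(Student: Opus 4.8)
The plan is to prove the two substantive inequalities $b^+_{{\mathcal C}^k_n}\le n/k$ and $b^-_{{\mathcal C}^k_n}\ge n/(2k)$; the middle inequality $b^-_{{\mathcal C}^k_n}\le b^+_{{\mathcal C}^k_n}$ is free, since if it failed then the bias $b=b^+_{{\mathcal C}^k_n}+1$ would be at most $b^-_{{\mathcal C}^k_n}$ and hence a win for Enforcer, and at the same time strictly above $b^+_{{\mathcal C}^k_n}$ and hence a win for Avoider, contradicting the determinacy of the game. Avoider's side I would settle by a crude counting argument, entirely analogous to the trivial half of Theorem~\ref{AE-strict-conn}: in the strict $(1:b)$ game on $E(K_n)$ Avoider ends up with at most $\lceil\binom{n}{2}/(b+1)\rceil$ edges, whereas any $k$-connected spanning graph on $n$ vertices has minimum degree at least $k$, hence at least $kn/2$ edges; so once $b$ exceeds $n/k$ (up to rounding) Avoider's graph must contain a vertex of degree below $k$, so Avoider wins outright and $b^+_{{\mathcal C}^k_n}\le n/k$.

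For Enforcer's side I would reduce the $k$-connectivity game to $k$ connectivity games played in parallel and feed each of them into Theorem~\ref{AE-Lehman}. Recall the fact used above that $K_n$ contains $\lfloor n/2\rfloor$ pairwise edge-disjoint spanning trees. Fix any $b$ with $k(b+1)\le\lfloor n/2\rfloor$; the largest such $b$ is about $n/(2k)$, so proving that Enforcer wins for all of them yields $b^-_{{\mathcal C}^k_n}\ge n/(2k)$ up to rounding. Select $k(b+1)$ such trees, partition them into $k$ groups of $b+1$ trees, let $G_j$ be the union of the $j$th group, and let $J$ collect all edges of $K_n$ in none of the $G_j$. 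Since each $G_j$ contains $b+1$ pairwise edge-disjoint spanning trees, Theorem~\ref{AE-Lehman} gives Enforcer a winning strategy ${\mathcal S}_j$ in the strict $(1:b)$ Avoider--Enforcer connectivity game on $G_j$. In the real game Enforcer runs all $k$ of these strategies simultaneously: if Avoider claims an edge of $G_j$, Enforcer answers with the $b$-element move prescribed by ${\mathcal S}_j$ in the current position of the $j$th auxiliary game (this is legal, since Enforcer touches $G_j$ only through ${\mathcal S}_j$, so the position there agrees with the board); if Avoider claims an edge of $J$, Enforcer answers with $b$ further edges of $J$. When the real game terminates every edge of $K_n$, in particular of each $G_j$, has been claimed, so the history inside $G_j$ is a complete legal play of the $(1:b)$ game in which Enforcer followed ${\mathcal S}_j$; hence Avoider's edges in $G_j$ contain a spanning tree. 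Thus Avoider's final graph contains $k$ pairwise edge-disjoint spanning trees.

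One point where a little care is needed is the endgame under the strict rules: a group $G_j$ may run out of free edges while the board is not yet empty, in which case the reply dictated by ${\mathcal S}_j$ contains fewer than $b$ edges and Enforcer must top up his move from elsewhere without disturbing the other auxiliary games. This is what the reservoir $J$ is for, and one has to check that $J$ is never exhausted prematurely --- which is why one should take $b$ a touch below $\lfloor n/2\rfloor/k$, equivalently keep a few spanning trees in reserve; the verification is routine bookkeeping.

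The real obstacle is the step from ``Avoider's graph contains $k$ edge-disjoint spanning trees'' to ``Avoider's graph is $k$-connected in the sense meant by ${\mathcal C}^k_n$''. A bare parallel use of Theorem~\ref{AE-Lehman} forces only $k$ edge-disjoint spanning trees, which gives $k$-edge-connectivity and, by itself, nothing more --- indeed a simple count shows $k$ edge-disjoint spanning trees need not produce a $k$-vertex-connected graph once $k\ge 3$. If ${\mathcal C}^k_n$ is the $k$-edge-connectivity game the argument above already finishes it; for the $k$-vertex-connectivity game one must choose the spanning trees inside the groups, and the groups relative to one another, so that the union of one spanning connected subgraph from each $G_j$ is forced to be $k$-vertex-connected (a sufficiently generic, e.g.\ cyclically shifted, choice works because the number of trees per group is large), or else run, alongside the $k$ connectivity games, an extra strategy that simultaneously drives Avoider's minimum degree up to $k$. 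Isolating the cleanest such refinement is where the substance of the proof lies; the numerical bounds $n/(2k)$ and $n/k$ then fall out of the crude edge count and of the estimate $\lfloor n/2\rfloor$ for the number of edge-disjoint spanning trees in $K_n$.
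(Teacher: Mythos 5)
Your overall route is exactly the one the survey has in mind when it says the result ``readily'' follows from Theorem~\ref{AE-Lehman} (the paper gives no more than that one line): the bound $b^+_{{\mathcal C}^k_n}\le n/k$ comes from the trivial count (a $k$-connected graph has minimum degree at least $k$, hence at least $kn/2$ edges, while Avoider finishes with only about $\binom{n}{2}/(b+1)$ edges), and the bound $b^-_{{\mathcal C}^k_n}\ge n/(2k)$ comes from splitting $k(b+1)$ of the $\lfloor n/2\rfloor$ edge-disjoint spanning trees of $K_n$ into $k$ groups of $b+1$ and letting Enforcer run the strategy of Theorem~\ref{AE-Lehman} separately on each group. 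So in approach you and the paper agree.

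Two caveats, the first of which is a genuine gap. The issue you flag yourself is the real one: parallel applications of Theorem~\ref{AE-Lehman} only force Avoider's graph to contain $k$ edge-disjoint spanning trees, i.e.\ to be $k$-edge-connected; if ${\mathcal C}^k_n$ is the $k$-vertex-connectivity game (the usual reading of ``$k$-connectivity''), your argument as written does not finish, and the suggested repair of choosing the trees in the groups ``generically'' does not obviously help, since it is Avoider, not Enforcer, who decides which spanning tree of each group $G_j$ he ends up with, so Enforcer cannot prescribe the union in advance. Your proposal stops exactly where you say it does, and that step is the substance missing relative to the stated theorem. Second, the ``routine bookkeeping'' for the reservoir $J$ is less routine than claimed: strict Avoider--Enforcer games are not bias monotone, so if $|J|$ is not divisible by $b+1$ and Enforcer must dump a deficit of up to $b-1$ edges into a live group, those extra Enforcer edges are not automatically harmless to the corresponding strategy ${\mathcal S}_j$ (unlike extra moves in Maker--Breaker games). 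Your observation that $|E(G_j)|=(b+1)(n-1)$ keeps the in-group rounds exact is the right one, but the reservoir deficit needs an actual argument. Finally, because of the same non-monotonicity, $b^-\ge n/(2k)$ requires a winning strategy for every $b\le n/(2k)$ separately; your construction does give one for each fixed such $b$ (unused trees simply join the reservoir), so the phrase ``take $b$ a touch below $\lfloor n/2\rfloor/k$'' must be read per fixed bias, not as a reduction to a single boundary value.
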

For the Hamiltonicity game we understand Enforcer's side well under both rules:
\begin{theorem}\textnormal{\cite{KS08}}\label{AE-Ham}
If $b\le (1-o(1))\frac{n}{\ln n}$, then Enforcer has a winning strategy in the $(1:b)$ Hamiltonicity game on $E(K_n)$, under either strict or monotone rules.
\end{theorem}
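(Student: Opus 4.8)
\noindent The plan is to follow the \emph{Erd\H{o}s paradigm}. The $(1:b)$ game lasts about $\binom{n}{2}/(b+1)$ rounds, so at its end Avoider's graph has $m=(1+\Omega(1))\frac{n\ln n}{2}$ edges whenever $b\le(1-o(1))\frac{n}{\ln n}$ --- comfortably past the Hamiltonicity threshold $\tfrac12 n\ln n$ of $G(n,m)$. Of course a mere edge count is of no use to Avoider's opponent, so the real content is an explicit strategy for Enforcer, and the first step is to replace ``Hamiltonian'' by a deterministic sufficient condition that Enforcer can realistically \emph{coerce}. The natural choice is Pós­a's rotation--extension criterion: a graph on $n$ vertices is Hamiltonian once it is connected and every vertex set $S$ with $|S|$ up to roughly $n/3$ satisfies $|N(S)\setminus S|\ge 2|S|$. (A graph with $\Theta(n\log n)$ edges can satisfy this; random graphs at the threshold do.) Thus it suffices to give Enforcer a strategy forcing Avoider's graph to be a connected graph with this expansion property.

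Enforcer's strategy, I would argue, splits into two coordinated goals. The first is to force a constant minimum degree in Avoider's graph. The crucial strategic insight is that Enforcer should \emph{protect}, not attack, the vertices that lag behind: he never claims a free edge incident to a vertex whose Avoider-degree is still below the target constant $c$ as long as some ``safe'' free edge (one avoiding all lagging vertices) is available. Free edges at lagging vertices then accumulate, and since Avoider must claim an edge every round, towards the end of the game --- when essentially all remaining free edges touch lagging vertices --- Avoider is \emph{compelled} to raise their degrees. This is a box-game phenomenon, reminiscent of the Box Game and of the use of Box$(p,1;\cdot)$ in the proof of Theorem~\ref{isol-vertex}, and making it precise calls for a potential function tracking the total number of edge-slots Avoider still owes at the currently deficient vertices, weighted by how few free edges remain at each of them. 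One must verify that Enforcer never runs out of safe moves before the compulsion phase takes over; this is exactly where the bias bound $b\le(1-o(1))n/\ln n$ enters, much as in the calculation behind Theorem~\ref{con-Maker1} (an application of the biased Erd\H{o}s--Selfridge criterion (\ref{biasedES})).

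The second goal is to force the expansion and connectivity. Here Enforcer applies the same philosophy one level up, to vertex sets rather than vertices: whenever Avoider's play threatens to leave some set $S$ with too small a neighbourhood --- in particular, threatens to leave some set of more than $n/2$ vertices internally Avoider-edge-free --- Enforcer refrains from claiming the relevant boundary and internal free edges, which then pile up until Avoider is forced to place edges precisely where Enforcer needs them. Combined with the constant minimum degree and a short endgame in which Enforcer steers Avoider's forced moves to finish connectivity, Pós­a's criterion then yields a Hamilton cycle in Avoider's graph. Finally, the argument transfers to monotone rules: Enforcer still claims exactly $b$ edges per round (claiming more would only shrink Avoider's graph and help him), while any extra edges a monotone Avoider chooses to claim can only advance Enforcer's goals.

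The main obstacle is carrying out the second goal \emph{simultaneously} for all the relevant vertex sets, together with coupling it to the first. Unlike in Maker--Breaker Hamiltonicity (Theorem~\ref{Ham-K}), where the side seeking the structure builds it directly, here Enforcer can only coerce Avoider, whose single move per round is the one lever against Avoider's freedom of choice; the delicate point is a global potential/bookkeeping argument showing that Enforcer can keep every potential ``bad set'' alive for a forced correction without ever exhausting his supply of safe edges, and that the endgame compulsion is strong enough to repair them all at once. The tightness of the bias threshold $b=(1-o(1))n/\ln n$ --- the Erd\H{o}s-paradigm value --- is precisely what makes this accounting close.
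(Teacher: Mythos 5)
The survey does not actually prove Theorem~\ref{AE-Ham}; it only cites \cite{KS08}, so there is no in-text argument to compare against --- but judged on its own terms your proposal has genuine gaps rather than a proof. The heart of the matter is exactly the part you defer: an explicit Enforcer strategy, with a potential/counting argument that closes at bias $b=(1-o(1))n/\ln n$, forcing Avoider's graph to be a connected expander. ``Refraining from the relevant boundary and internal edges'' cannot be done simultaneously for the exponentially many candidate sets $S$ (almost every edge is ``relevant'' to some bad set), and the compulsion mechanism is weaker than you suggest: even when all surviving free edges lie where Enforcer wants them, Avoider still picks only a $1/(b+1)$ fraction of them, \emph{of his choosing}, so he retains precisely the freedom needed to satisfy a degree constraint while wrecking a global property such as expansion or connectivity. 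Acknowledging that ``the delicate point is a global potential/bookkeeping argument'' does not supply it; this is the theorem, not a technicality. (The monotone-rules transfer and the endgame degree-forcing via a box-game flavour, as in Theorem~\ref{isol-vertex}, are the plausible parts of the sketch, though even the minimum-degree step needs a real counting argument at this bias.)

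There is also a concrete flaw in your reduction target. The criterion ``connected and $|N(S)\setminus S|\ge 2|S|$ for all $|S|$ up to roughly $n/3$ implies Hamiltonian'' is unusable here: any graph containing a vertex $v$ of degree $o(n)$ violates it, since one may take $S$ of size $n/3$ disjoint from $\{v\}\cup N(v)$, giving $|N(S)\setminus S|\le n-|S|-1<2|S|$. Avoider's graph has average degree $\Theta(\ln n)$, so no strategy can force this condition. The workable route (as in the Maker--Breaker Theorem~\ref{Ham-K}) is expansion of \emph{small} sets plus connectivity, followed by a rotation--extension/booster phase in which specific extra edges must be added to turn the expander into a Hamiltonian graph --- and forcing Avoider to claim those particular booster edges is yet another coercion problem your outline does not address. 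In short, the Erd\H{o}s-paradigm heuristic and the ``protect the deficient vertices'' idea are reasonable starting points, but the proposal neither reconstructs the argument of \cite{KS08} nor substitutes a complete alternative.
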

Moving on to the monotone rules, we have the following key result.
\begin{theorem}\textnormal{\cite{HKSS10}}\label{AE-mindeg1}
If $b\ge (1+o(1))\frac{n}{\ln n}$, then Avoider has a strategy to be left with an isolated vertex in the monotone $(1:b)$ game on $E(K_n)$.
\end{theorem}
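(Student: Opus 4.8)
The plan is to establish the Avoider--Enforcer counterpart of Chv\'atal--Erd\H{o}s's vertex isolation result (Theorem~\ref{isol-vertex}): Avoider will keep some vertex untouched by his own edges throughout the game, so that in the end every edge at that vertex has been claimed by Enforcer. By the bias monotonicity of monotone Avoider--Enforcer games it is enough to show that Avoider wins the $(1:b)$ game for $b=\lceil(1+\epsilon)\tfrac n{\ln n}\rceil$, for each fixed $\epsilon>0$ and all large $n$; and since under monotone rules Avoider need only claim \emph{at least} one edge per turn, we may have him claim exactly one.

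The strategy I would give Avoider is purely reactive. He keeps the set $U$ of vertices not yet touched by one of his edges, and writes $W:=V\setminus U$. On his turn he makes a \emph{coasting} move if he can, claiming some free edge inside $W$ and leaving $U$ intact; only when every free edge meets $U$ is he forced to make a \emph{transition} move, claiming a free edge with exactly one endpoint in $U$ (this shrinks $U$ by one) --- or, should every free edge lie inside $U$, an edge inside $U$ (shrinking $U$ by two, a rare event the bookkeeping can absorb). If the game ends with $U\neq\emptyset$, then any $v\in U$ has all of its edges claimed, necessarily all by Enforcer, so $v$ is isolated in Avoider's graph and Avoider wins. The whole problem is thus reduced to showing that, against every Enforcer, $W$ never grows to all of $V$.

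For this I would cut the play into \emph{epochs}, epoch $c$ being the block of rounds during which $|W|=c$. When epoch $c$ opens (say with the absorption of a vertex $w$), the number of free $W$-internal edges is $c-2$ minus however many of $w$'s edges into $W$ Enforcer had already ``pre-drained'', and since each coasting round deletes at least one such edge, this bounds the epoch's length. Two constraints now pull against each other. Every round removes at least $b+1$ of the $\binom n2$ edges, so the number $R$ of rounds satisfies $R\le\binom n2/(b+1)=(1+o(1))\tfrac{n\ln n}{2}$. On the other hand, to end epochs quickly enough to push $|W|$ up to $n$, Enforcer must pre-claim the edges at the not-yet-absorbed vertices well in advance, and his cumulative claim budget by the start of epoch $c$ is only about $b$ times the number of rounds elapsed. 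Balancing this ``coasting supply'' against ``pre-draining budget'' is the computation that becomes tight exactly at $b=(1\pm o(1))\tfrac n{\ln n}$, and the factor $1+\epsilon$ provides the slack that makes it come out with $|W|<n$.

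The clean way to organize the endgame of this balancing argument --- and the step I expect to be the main obstacle --- is to recast it as a Box game, as in the proof of Theorem~\ref{isol-vertex}. Fix a $t=t(n,\epsilon)\to\infty$ with $(b-1)\sum_{i=1}^{t-1}1/i>n$, which is possible precisely because $b=(1+\epsilon)\tfrac n{\ln n}$. If Avoider's $W$ never reaches size $n-t$ we are already done; otherwise, once it does, the remaining play is --- up to the few leftover $W$-internal edges Avoider can still coast on --- a Box game on the $t=|U|$ pairwise disjoint boxes $E_v=\{vw:w\in W,\ vw\ \text{free}\}$, $v\in U$, each of size at most $n-t$, with Avoider (who never claims inside these boxes) winning as soon as Enforcer is \emph{forced} to vacate one of them. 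With this many boxes of this size and the $b$-per-move player cast as the fast player, we are in the parameter regime of Theorem~\ref{Box-Maker} (and safely away from that of Theorem~\ref{Box-Breaker}), so a Box-game analysis in its spirit shows that the fast player cannot keep all boxes alive and some $v\in U$ ends isolated. The genuine technical work lies in (i) confirming that Avoider's reactive play really does steer the game into such a configuration while still $|U|\ge t$ --- which is the epoch accounting above --- and (ii) checking that the edges inside $U$, which Avoider likewise never touches and which Enforcer harmlessly absorbs, do not interfere, so that ``$E_v$ emptied'' indeed amounts to ``$v$ isolated''.
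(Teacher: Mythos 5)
Your strategic skeleton (coast on edges inside the touched set $W$, touch a new vertex only when forced, win if some vertex stays untouched) is the right starting point, and your instinct that the $n/\ln n$ threshold has a Box-game, harmonic-sum origin is sound; note that the survey itself gives no proof of Theorem~\ref{AE-mindeg1}, only the reference. But as an argument your proposal has two genuine gaps. First, the epoch accounting treats Enforcer's bias as a \emph{budget}: you bound his ``pre-draining'' by roughly $b$ times the number of rounds elapsed, and you bound epoch lengths from below by the coasting supply. Under monotone rules $b$ is only a \emph{minimum}: Enforcer may claim as many edges as he likes, and in particular he can clear \emph{all} free edges inside $W$ on every turn, making every epoch have length one and making every Avoider move a transition. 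Against this your ``supply versus budget'' balance, done with the quantities you name, is exactly tight for every $b$ up to about $n/2$ and produces no logarithm at all; the assertion that it ``becomes tight exactly at $b=(1\pm o(1))n/\ln n$'' is not a computation you can actually carry out from the stated bounds. The only thing that punishes an over-claiming Enforcer is board exhaustion, so a correct proof must run a consumption count (total edges claimed versus $\binom n2$) jointly with the fact that Enforcer's forced claims land on stars of vertices Avoider refuses to touch --- and this forces Avoider's transition rule to be specified (e.g., always absorb a vertex at which Enforcer has claimed fewest edges), a choice your write-up leaves open although the whole argument hinges on keeping the heavily Enforcer-claimed stars untouched.

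Second, the endgame reduction cites the wrong half of the Box game. Theorem~\ref{Box-Maker} says a player of bias $p$ who \emph{wants} to empty a box can do so when $s\le(p-1)\sum_{i<n}1/i$; you need the mis\`ere statement that Enforcer, who emphatically does \emph{not} want to complete any star $E_v$ with $v$ untouched, is nevertheless \emph{forced} to do so (or to exhaust the board first, which also suits Avoider). Ability of a willing BoxMaker does not imply compulsion of an unwilling one, so neither Theorem~\ref{Box-Maker} nor Theorem~\ref{Box-Breaker} applies as a black box; moreover your auxiliary game is not a clean Box game: Enforcer may exceed $b$, may dump claims on leftover $W$-internal edges and on edges inside $U$ (which lie in two boxes), and each forced Avoider move simultaneously destroys a box and creates fresh coasting supply that feeds Enforcer's quota harmlessly. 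These are exactly the points you defer as ``technical work'' in (i) and (ii), but they constitute essentially the entire proof: what remains after removing them is the trivial observation that Avoider wins if $U\neq\emptyset$ at the end. To repair the argument you would need a self-contained potential (or reverse-Box-game) lemma of the form ``a player compelled to claim at least $b$ free edges per round cannot avoid fully claiming some star at a vertex Avoider has withheld,'' proved together with the board-consumption bookkeeping sketched above; that is where the harmonic sum and the $(1+\epsilon)$ slack genuinely enter, in the spirit of the proof of Theorem~\ref{Box-Breaker} rather than Theorem~\ref{Box-Maker}.
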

Theorems \ref{AE-Ham} and \ref{AE-mindeg1} combined establish the threshold bias $b^{\normalfont\text{mon}}$ of several important games (connectivity, perfect matching, Hamiltonicity, etc.) to be asymptotically equal to $n/\ln n$. Observe that for the strict connectivity game both threshold biases, which happen to be equal by Theorem \ref{AE-strict-conn}, are quite far from the threshold bias for the monotone version, perhaps contrary to our intuition.

Finally, Clemens et al. \cite{CEPT14} showed very recently:
\begin{theorem}
For $n$ large enough and $b\ge 200n\ln n$, Avoider has a strategy to be left with a graph with at most one cycle in the $(1:b)$ game on $E(K_n)$, under either strict or monotone rules.
\end{theorem}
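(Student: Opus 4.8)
Here is how I would go about it. First I would dispose of the monotone case: any extra edges Enforcer claims only shorten the game and never help him, so a strategy that works in the strict $(1:b)$ game works verbatim in the monotone one. Write $N=\binom n2$; in the strict $(1:b)$ game Avoider claims at most $t:=\lceil N/(b+1)\rceil\le(1+o(1))\frac{n}{400\ln n}$ edges in total. Since a graph has at most one cycle exactly when it becomes a forest after deleting at most one edge, it suffices for Avoider to close a cycle (play a non-forest edge) at most once.

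I would have Avoider play the obvious greedy strategy: keep his graph $A$ a forest as long as he can, and on each turn, if some free edge joins two distinct components of $A$, play one for which the resulting component is as small as possible (so small components get merged by preference); if no free edge joins two distinct components --- call this ``$A$ is \emph{blocked}'' --- play an arbitrary free edge, closing a cycle. That single forced move is the only cycle Avoider ever creates, provided the game ends before he is forced to play an inside-component edge again. The crucial observation is what the board looks like when $A$ first becomes blocked: then every free edge lies inside a component of $A$, so the number of free edges is at most $\sum_C\bigl(\binom{|C|}{2}-(|C|-1)\bigr)=\sum_C\binom{|C|-1}{2}$, the sum over components of $A$ --- and components of size at most $2$ contribute nothing. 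Hence, if Avoider keeps every component of size at most $S$, the number of free edges when $A$ first becomes blocked is at most $\frac{S-2}{2}\,e(A)\le\frac S2(1+o(1))\frac{n}{400\ln n}$, which is below $b=200n\ln n$ once $S\le 10^5\ln^2 n$, say. In that case Avoider plays his one cycle-closing edge, fewer than $b$ free edges remain, Enforcer is forced to claim all of them, the board is full, and Avoider's final graph has exactly one cycle.

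So everything reduces to showing Avoider can keep all components of size $O(\ln^2 n)$ until $A$ becomes blocked. The first stage is a ``matching phase'': while more than about $2tn\le(1+o(1))\frac{n^2}{200\ln n}$ edges are still free, Avoider can always join two components each of size at most $2$. Indeed a forest with $\le t$ edges has at least $n-\frac32 t$ vertices in components of size $\le 2$, so at least $\binom{n-\frac32 t}{2}-t=N-O(n^2/\ln n)$ edges of $K_n$ run between distinct such components, and these cannot all be claimed while more than $O(n^2/\ln n)$ edges are free; throughout this stage $A$ stays a forest with all components of size $\le 4$. Since Enforcer takes $b$ edges per round, this phase lasts about $t-O(n/\ln^2 n)$ rounds, leaving an ``endgame'' of only $K=O(n/\ln^2 n)$ rounds in which Avoider makes at most $K$ more moves.

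The hard part --- and where the constant $200$ is used --- is showing Avoider's components still cannot grow past $O(\ln^2 n)$ during this short endgame. By the start of the endgame the board is already $N-O(n^2/\ln n)$ full, so Enforcer can claim at most $O(n^2/\ln n)$ further edges; on the other hand, for Avoider to be forced into a merge of ``cost'' $|C_u||C_v|$ exceeding a threshold $c$, Enforcer must already have claimed \emph{every} edge between two components of size at most $\sqrt c$, a set which also has size of order $n^2/\ln n$. The plan is to play these two bounds against each other --- tracking Enforcer's bounded remaining edge budget against the edges he must spend in order to force each successive increment of component size --- and conclude that the total cost $\sum a_ib_i$ of all of Avoider's endgame merges is at most $b$; equivalently $\sum_C\binom{|C|-1}{2}\le b$ holds throughout the endgame, so no component ever exceeds $O(\ln^2 n)$. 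Feeding this back into the previous step completes the proof. I expect this endgame bookkeeping to be the main obstacle: both ``Avoider can still make a cheap merge'' and ``Enforcer can block all cheap merges'' cease to hold only once roughly $n^2/\ln n$ edges remain free, so one genuinely has to count Enforcer's finite edge budget against the edges he needs to force component growth, and it is exactly this competition that the large constant $200$ in $b\ge200n\ln n$ is engineered to win with room to spare.
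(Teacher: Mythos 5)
The survey states this theorem without proof (it is quoted from Clemens, Ehrenm\"uller, Person and Tran), so your sketch has to stand on its own --- and at present it does not. The skeleton is reasonable: Avoider plays greedily to keep a forest, at the first \emph{blocked} position every free edge lies inside a component, so the number of free edges is at most $\sum_C\binom{|C|-1}{2}$, and if this is below $b$ then one forced cycle-edge ends matters. But the entire content of the theorem is the step you leave as a ``plan'': proving that the greedy merge rule keeps $\sum_C\binom{|C|-1}{2}$ below $b$ (equivalently, components of size $O(\ln^2 n)$) up to the first blocked position. Everything you actually verify (Avoider makes at most $\lceil N/(b+1)\rceil$ moves, the blocked-position count, the matching phase with components of size at most $4$) is routine; the difficulty sits exactly in the endgame, where Enforcer owns almost the whole board, claims $200n\ln n$ edges per round, and tries to wipe out all cheap merges and funnel Avoider's forest into one component of size about $\sqrt{2b}\approx 20\sqrt{n\ln n}$ whose interior he keeps free. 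For this you offer only ``play the two bounds against each other'': no potential function, no amortized accounting of Enforcer's remaining budget against the cost of the merges he forces. Moreover, one of the two bounds is not established: you assert that to force a merge of cost $c$ Enforcer ``must already have claimed every edge between two components of size at most $\sqrt c$, a set of size of order $n^2/\ln n$,'' but by the start of the endgame Enforcer has already claimed all but $O(n^2/\ln n)$ edges and he chose \emph{which} ones during the first phase --- he may well have spent that phase eliminating precisely the free edges joining small components, so the number of edges he still needs to invest is not something your matching-phase argument controls. Until this bookkeeping is carried out, what you have is a program for a proof, not a proof.

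A secondary but genuine flaw is the one-line reduction of the monotone rules to the strict rules: ``any extra edges Enforcer claims only shorten the game and never help him'' is false as a general principle for Avoider--Enforcer games --- extra Enforcer claims can destroy Avoider's safe options, which is exactly the non-monotonicity phenomenon the survey illustrates with two disjoint pairs, where raising Enforcer's bias from $1$ to $2$ turns an Avoider win into an Enforcer win. For your particular strategy the reduction is likely repairable, since the counts you use (number of Avoider moves, free edges at a blocked position) only improve when Enforcer claims more than $b$ edges per round; but then the missing endgame analysis must itself be proved uniformly in the number of edges Enforcer actually takes, rather than quoted from the strict case.
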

It follows that the threshold biases under monotone rules, and the upper threshold biases  $b^+_{{\mathcal H}}$ under strict rules for the non-planarity and the non-$k$-colorability games with $k\ge 3$ are at most $200n\ln n$. 
Still, for these games we are quite far from nailing these biases.

For Avoider-Enforcer games with losing sets of constant size the situation is very far from satisfactory --- we know few sporadic results or bounds. For example, for the triangle game ${\mathcal H}_{K_3,n}$ the monotone threshold bias is $\Theta(n^{3/2})$ \cite{HKSS10}, very far from the threshold bias for the Maker-Breaker version; for the strict game we have: $b^-_{{\mathcal H}_{K_3,n}}=\Omega(n^{1/2})$,  $b^+_{{\mathcal H}_{K_3,n}}=O(n^{3/2})$ \cite{B14}. A recent paper of Bednarska-Bzd\c ega \cite{B14} contains several interesting results of this sort.

As for general tools to tackle Avoider-Enforcer games, we are somewhat shorthanded here. We have the following results:
\begin{theorem}\textnormal{\cite{HKS07}}
If $\sum_{A\in{\mathcal H}}\left(1+1/a\right)^{-|A|+a}<1$,
then Avoider wins the biased $(a:b)$ game ${\mathcal H}$, under both strict and monotone rules, for every $b\ge 1$.
\end{theorem}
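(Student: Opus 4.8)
\medskip
\noindent The plan is to run an Erd\H{o}s--Selfridge-style potential argument (cf.\ Theorem~\ref{ES-th}) on Avoider's side. For a position in which Avoider has claimed the set $V_A$, Enforcer has claimed $V_E$, and $F=X\setminus(V_A\cup V_E)$ is the set of still-free elements, I would track the potential
\[
\Phi \;=\; \sum_{\substack{A\in{\mathcal H}\\ A\cap V_E=\emptyset}}\bigl(1+\tfrac1a\bigr)^{-|A\cap F|}\,,
\]
noting that for such an (unblocked) set $A$ one has $|A\cap F|=|A|-|A\cap V_A|$. Two facts make $\Phi$ the right object. First, if at some moment Avoider has fully claimed a losing set $A$ --- so $A\subseteq V_A$, which forces $A\cap V_E=\emptyset$ and $|A\cap F|=0$ --- then that single term equals $1$ and hence $\Phi\ge 1$. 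Second, any move of Enforcer can only decrease $\Phi$ or leave it unchanged: each element he takes either lies in no currently unblocked set, or it blocks some unblocked sets and deletes their positive contributions. Consequently it suffices to design a strategy for Avoider that keeps $\Phi<1$ after each of his \emph{own} moves; then $\Phi<1$ holds throughout, and by the first fact Avoider never completes a losing set. This reduction is insensitive to $b$ and goes through verbatim under monotone rules, where Enforcer may take extra elements (blocking only more sets) --- so it will cover both rule sets and all $b\ge1$ at once.

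The first move is where the hypothesis is spent. The assumption $\sum_{A}(1+1/a)^{-|A|+a}<1$ says precisely that $(1+1/a)^a\Phi_0<1$, where $\Phi_0=\sum_A(1+1/a)^{-|A|}$ is the potential of the empty board; equivalently $\Phi_0<(1+1/a)^{-a}$. The same assumption forces $|A|\ge a+1$ for every $A\in{\mathcal H}$ (a set of size $\le a$ would contribute a term $\ge 1$ by itself), so no single move of $a$ elements can complete a losing set. Avoider's first move can then be arbitrary: claiming a set $S$ of $a$ free elements multiplies each unblocked term by $(1+1/a)^{|A\cap S|}\le(1+1/a)^a$, so the new potential is at most $(1+1/a)^a\Phi_0<1$ (and the same bound holds, with an exponent $\le a$, if fewer than $a$ elements remain and Avoider must take all of them).

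For every subsequent move I would have Avoider pick his $a$ elements greedily, one at a time: among the free elements choose an $x$ minimising the local weight $w_x=\sum_{A\ni x,\,A\cap V_E=\emptyset}(1+1/a)^{-|A\cap F|}$; claiming such an $x$ raises $\Phi$ by exactly $\tfrac1a w_x$ and in particular cannot complete a set (completing $A$ would make $w_x\ge1$, too much to afford while $\Phi<1$). The remaining task is to show this keeps $\Phi<1$. The crude estimate $\min_{x\in F}w_x\le|F|^{-1}\sum_{x}w_x=|F|^{-1}\sum_A|A\cap F|(1+1/a)^{-|A\cap F|}\le\Phi$ only tells us that one full turn of Avoider multiplies $\Phi$ by at most $(1+1/a)^a$, which is useless once $\Phi$ has climbed past $(1+1/a)^{-a}$. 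The key extra ingredient is the coupling with Enforcer: as long as the board still contains a free element lying in no unblocked set, Avoider (like Enforcer) claims such a harmless element and $\Phi$ does not change; and once every free element lies in some unblocked set, each element Enforcer takes necessarily blocks an unblocked set, so Enforcer's moves strictly decrease $\Phi$ and this decrease can be made to pay for Avoider's increases. Carrying out this amortised bookkeeping --- with Avoider always taking exactly $a$ elements, which is legal under both rule sets --- completes the proof.

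The step I expect to be the main obstacle is precisely this last one: controlling the increase of $\Phi$ forced on Avoider once he can no longer find harmless elements, and in particular ruling out the endgame trap in which the very last free element happens to be the unique missing vertex of some still-unblocked set. I would expect to handle it either by maintaining a slightly stronger invariant --- keeping $\Phi$ bounded away from $1$ by a reserve drawn from the ``$+a$'' slack in the hypothesis --- or by a more careful move-by-move comparison of the weight Avoider adds against the total weight of the unblocked sets Enforcer is compelled to destroy over the same stretch of the game; the rest is a routine potential computation.
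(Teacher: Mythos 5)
The survey does not actually prove this criterion (it is quoted from \cite{HKS07}), so your argument has to stand on its own; as it happens, your potential $\Phi$, the greedy element-by-element strategy, and the idea of charging Avoider's increases to Enforcer's forced claims are precisely the ingredients of the known proof. The genuine gap is that the decisive step is never carried out: your last paragraph is a statement of intent (``carrying out this amortised bookkeeping completes the proof''), not an argument, and the difficulty you flag there is real. The invariant ``$\Phi<1$ after each of Avoider's moves'' is not inductively self-sustaining: if a single active set has one remaining free element which is also the last free element of the board, then $\Phi=(1+\frac{1}{a})^{-1}<1$ before Avoider's move, yet he is forced to push $\Phi$ to $1$. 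Relatedly, the parenthetical claim that completing a set ``would make $w_x\ge 1$'' is false --- the last free element of an active set has weight only $(1+\frac{1}{a})^{-1}$ --- and no local rule prevents Avoider from being forced to take such an element; only a global invariant can. Finally, you spend the ``$+a$'' slack of the hypothesis entirely on the first move, whereas it must be preserved at the start of \emph{every} Avoider turn (and, in particular, at his last turn, after which Enforcer may never reply).

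What closes the gap, along the lines you hint at, is the stronger invariant $\Phi<(1+\frac{1}{a})^{-a}$ at the start of each Avoider turn, together with a quantitative version of the coupling. Within a turn each greedy claim adds $\frac{1}{a}w^{(i)}_{\min}\le\frac{1}{a}\Phi$, so $\Phi$ is multiplied by at most $(1+\frac{1}{a})$ per claim and stays below $(1+\frac{1}{a})^{a}(1+\frac{1}{a})^{-a}=1$ after the turn; this already covers the final turn, when no Enforcer reply is available. To restore the invariant, observe that a free element has weight zero exactly when it lies in no active set, and Avoider's claims never deactivate sets, so zero-weight free elements cannot appear during his turn; hence if his turn increased $\Phi$ at all, then at Enforcer's reply every free element lies in some active set, and the first element $y$ Enforcer claims removes weight $w(y)\ge$ the current minimum weight. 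Since each element's weight, and therefore the running minimum, only increases during Avoider's turn, Avoider's total increase $\frac{1}{a}\sum_i w^{(i)}_{\min}\le\max_i w^{(i)}_{\min}\le w(y)$: a single forced Enforcer element repays the entire turn, which is exactly why the criterion is independent of $b$ and valid under both strict and monotone rules. Without this (or an equivalent) computation, your proposal identifies the right strategy and the right obstacle but does not yet prove the theorem.
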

\noindent(Observe the lack of sensitivity to the value of $b$ in the above criterion --- an obvious drawback.)
\begin{theorem}\textnormal{\cite{B14}}
Let $(X,{\mathcal H})$ be a hypergraph with all sets $A\in{\mathcal H}$ of size at most $r$. If
$\sum_{A\in{\mathcal H}}\left(1+b/(ar)\right)^{-|A|+a}<1$,
then Avoider wins the biased $(a:b)$ game ${\mathcal H}$, under both strict and monotone rules.
\end{theorem}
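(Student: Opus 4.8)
The plan is to run an Erd\H{o}s--Selfridge-type potential argument on Avoider's side, choosing the potential so that the bias $b$ and the uniformity bound $r$ enter exactly as in the statement. At any point of the play split the board as $X=X_A\cup X_E\cup F$ into Avoider's, Enforcer's, and the still-free elements; set $w:=1+\tfrac{b}{ar}$; call a set $A\in{\mathcal H}$ \emph{alive} if $A\cap X_E=\emptyset$ (a non-alive set already has an element outside $X_A$, so it can never lie inside $X_A$ and is harmless); and put
$$\Phi:=\sum_{A\in{\mathcal H}:\,A\cap X_E=\emptyset}w^{\,a-|A\cap F|}\,.$$
The hypothesis $\sum_{A}(1+b/(ar))^{-|A|+a}<1$ is precisely the assertion that the \emph{initial} potential satisfies $\Phi_0<1$; note it also forces $|A|\ge a+1$ for every $A\in{\mathcal H}$, since each summand is already less than $1$.

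The conceptually clean step is the reduction: \emph{if Avoider can guarantee $\Phi<1$ at the beginning of each of his turns, then Avoider wins}. Indeed, at such a moment any alive set $A$ with $A\cap F\neq\emptyset$ contributes a term $w^{\,a-|A\cap F|}\le\Phi<1$, whence $|A\cap F|\ge a+1$; but a single Avoider turn removes at most $a$ elements from $F$, so Avoider cannot take the last free element of any alive set during that turn. Consequently no set of ${\mathcal H}$ is ever completed by Avoider, and since the final position has $F=\emptyset$ (so each set is split between $X_A$ and $X_E$), every set of ${\mathcal H}$ meets $X_E$ and Avoider has won. The same count also handles the endgame and the monotone rules: when $|F|\le a$ on Avoider's turn, $\Phi<1$ forces no alive set to have a free element at all, so claiming everything that remains is safe; and in the monotone game Avoider simply claims exactly $a$ elements each turn, while any surplus claims of Enforcer only destroy more sets.

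It remains to describe Avoider's strategy and prove the invariant $\Phi<1$. The strategy is greedy: on each move Avoider claims a free element $x$ minimising the induced change $g(x)=(w-1)\sum_{A\ni x,\,A\ \mathrm{alive}}w^{\,a-|A\cap F|}$; in particular, whenever some free element lies in no alive set he takes it and $\Phi$ is unchanged, and Enforcer's moves can only delete positive terms, hence never raise $\Phi$. The content is in bounding the increase when Avoider is \emph{forced} to touch alive sets, and this is exactly where $r$-uniformity is used: averaging $g$ over the free elements gives $\min_x g(x)\le\frac{w-1}{|F|}\sum_{A\ \mathrm{alive}}|A\cap F|\,w^{\,a-|A\cap F|}\le\frac{(w-1)r}{|F|}\,\Phi=\frac{b}{a|F|}\,\Phi$, so such a forced claim multiplies $\Phi$ by at most $1+\frac{b}{a|F|}$. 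One then combines this with the observation that a forced position (every free element inside some alive set) can only occur once $|F|$ has dropped below a quantity of order $rw^{r}\Phi$, i.e. only near the very end of the play, where Enforcer in turn is compelled to claim inside alive sets and thereby push $\Phi$ back down; a round-by-round amortization — calibrated so that the growth factor $1+\frac{b}{a|F|}$ and the forced decrease coming from Enforcer's $b$ claims balance, which is precisely what pins $w-1$ to $b/(ar)$ — keeps $\Phi<1$ until the board empties.

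The main obstacle is exactly this amortization. Unlike the Breaker-side Erd\H{o}s--Selfridge argument, where the active player's moves \emph{lower} the potential so that he can counteract his opponent move by move, here it is Avoider's own moves that raise $\Phi$ while his opponent has no incentive to lower it. One therefore has to exploit a global feature: the phase in which Avoider is genuinely forced is bounded in length (independently of the board size) and confined to the end of the game, and in that phase Enforcer's large bias forces compensating losses among the alive sets. Making the constants line up in that final window — where $|F|$ is small and the parity of the last moves matters — is the delicate part; the rest is routine bookkeeping.
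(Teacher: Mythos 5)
The survey itself gives no proof of this criterion (it is quoted from Bednarska-Bzd\c ega \cite{B14}), so your proposal can only be judged on its own terms — and as it stands it has a genuine gap at the one place where the theorem actually lives. Your setup is fine: the potential $\Phi=\sum_{A\ \mathrm{alive}}w^{a-|A\cap F|}$ with $w=1+b/(ar)$, the reduction ``$\Phi<1$ at the start of each Avoider turn $\Rightarrow$ every incomplete alive set has at least $a+1$ free elements $\Rightarrow$ Avoider cannot complete anything this turn,'' and the averaging bound $\min_x g(x)\le \frac{(w-1)r}{|F|}\Phi=\frac{b}{a|F|}\Phi$ (this is indeed where the rank enters) are all correct. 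But the invariant ``$\Phi<1$ at the start of every turn'' is precisely what has to be \emph{proved}, and your argument for it is a promissory note. Unlike the Erd\H{o}s--Selfridge situation, here the active player's moves only raise $\Phi$, and the opponent has no obligation to lower it: once a single killed set still has free elements, Enforcer can spend his entire bias on such elements (or always kill the cheapest alive set), so the ``forced decrease coming from Enforcer's $b$ claims'' that your amortization is calibrated against simply does not exist as a guaranteed quantity. Quantitatively, the slack you start with is only $\Phi_0<1$, while even the short forced phase you describe (where $|F|\lesssim r\,w^{r-a}\Phi$) can multiply $\Phi$ by a factor of order $\bigl(r e^{b/a}\bigr)^{b/(a+b)}$ if no compensating decreases materialize; nothing in your sketch shows these two effects balance at exactly $w-1=b/(ar)$, and the phrase ``routine bookkeeping'' hides the entire content of the theorem.

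There is also a structural worry your sketch does not address: positions with $\Phi$ just below $1$ in which every free element lies in an alive set having exactly $a+1$ free elements are positions from which the invariant \emph{cannot} be maintained (any legal move pushes $\Phi\ge 1$, and Enforcer may then sweep all other free elements and force Avoider to finish a set). So a correct proof must show that myopic greedy play never reaches such configurations from a sub-$1$ start — i.e. it must control how deeply Avoider is ever driven into individual sets — and this is a genuinely global statement that the local greedy bound does not give. This is exactly the work done in \cite{B14}, which does not proceed by keeping this potential below $1$ round by round; to repair your write-up you would need either that argument or a worked-out replacement for the missing amortization, not just the assertion that the constants ``line up.''
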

\noindent This criterion is handy for games with losing sets of small size.

\section{More boards, more games}
Most of the concrete games we have considered so far are played on the complete graph $K_n$. This does not have to be the case, and many games on sparser graphs are equally interesting. Also, sparsifying the game board (rather than turning to biased games) can be seen as an alternative approach to provide Breaker with higher chances to win standard graph games against Maker.

A very typical setting is games on random graphs. In this scenario, for a given game type, say Maker-Breaker Hamiltonicity, we first set up a probability space of graphs, say, the binomial random graphs $G(n,p)$, and then ask about the probability of generating a board which is a win of each of the players.

Here is a representative result of this sort. Denote $N=\binom{n}{2}$. Consider the random graph process $\tilde{G}=(G_i)_{i=0}^{N}$ on $n$ vertices, described as follows. Start with the empty graph $G_0$ with vertex set $[n]$, and then for $1\le i\le N$, form a graph $G_i$ by adding to $G_{i-1}$ a random missing edge. For a monotone graph property $P$ and a (random) graph process $\tilde{G}$, we define the {\em hitting time} $\tau(\tilde{G},P)$ as the minimal $i$ such that $G_i$, the $i$th graph of the process, possesses $P$. For a random graph process $\tilde{G}$, the hitting time $\tau(\tilde{G},P)$ becomes a random variable and we can study its typical behavior. Consider now the unbiased Hamiltonicity game $\mathcal{HAM}$ with Breaker moving first, and let $M_{\mathcal {HAM}}$ be the property ``Maker wins $\mathcal{HAM}$ on $G$". Breaker clearly wins for the empty graph $G_0$, and by the classical Chv\'atal-Erd\H{o}s result \cite{CE78}, Maker is
 the winner for $G_N=K_n$. So the hitting time  $\tau(\tilde{G},P)$ lies somewhere in between. Those familiar with the theory of random graphs can guess that the key to Maker's win will be the disappearance of vertices of small degree. Indeed, if $\delta(G)<4$, then Breaker, moving first, wins the game by claiming all but at most one edge at a vertex of minimum degree in $G$. Thus, $\tau(\tilde{G},M_{\mathcal{HAM}})\ge \tau(\tilde{G},\delta_4)$, where $\delta_4$ is the property of having minimum degree at least 4.
\begin{theorem}\textnormal{\cite{BFHK12}}
For a random graph process $\tilde{G}$ on $n$ vertices, with high probability $\tau(\tilde{G},M_{\mathcal{HAM}})= \tau(\tilde{G},\delta_4)$.
\end{theorem}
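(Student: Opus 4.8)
The plan is to establish the two inequalities between the hitting times separately. One direction, $\tau(\tilde G, M_{\mathcal{HAM}}) \ge \tau(\tilde G,\delta_4)$, is exactly the observation made just before the theorem: as long as $\delta(G)\le 3$, Breaker (moving first) can claim all but at most one of the edges at a minimum-degree vertex, leaving Maker with at most one edge there, so Maker never builds a Hamilton cycle. It therefore remains to show that whp, at the very moment the process first reaches minimum degree $4$, Maker already wins. Write $m=\tau(\tilde G,\delta_4)$; we must prove that whp $G_m\in M_{\mathcal{HAM}}$.

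First I would record the typical structure of $G_m$. By classical facts on random graph processes, whp $m=(1+o(1))\frac{n\ln n}{2}$, and via the natural coupling $G_m$ is sandwiched between $G_{m_1}$ and $G_{m_2}$ with $m_i=(1+o(1))\frac{n\ln n}{2}$. Let $W$ be the set of \emph{poor} vertices, those of degree at most $\ln\ln n$ in $G_m$. Standard first- and second-moment computations show that whp: (i) $|W|=n^{o(1)}$, while every vertex outside $W$ has degree within a constant factor of $\ln n$; (ii) $W$ is an independent set whose vertices are pairwise at distance at least $3$, so no edge is incident to two poor vertices and no neighbour of a poor vertex is poor; and (iii) $G_m$ is a strong P\'osa-type expander: it is connected, every vertex set $S$ with $|S|\le n/\ln^2 n$ satisfies $|N_{G_m}(S)\setminus S|\ge 3|S|$, and there is an edge of $G_m$ between any two disjoint vertex sets each of size at least $n/\ln^3 n$. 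These are precisely the pseudorandomness properties of $G(n,c\ln n/n)$ that underlie the proof of Theorem \ref{Ham-K}, the one nonstandard point being that they must be verified \emph{at the $\delta_4$ hitting time} rather than for a comfortably denser $G(n,m')$.

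Given this structure, Maker's strategy has two phases. In Phase I he \emph{protects} the poor vertices: for each $v\in W$ he aims to claim two of the (at least four) $G_m$-edges at $v$, which by (ii) automatically join $v$ to non-poor vertices, before Breaker claims all but one of them. Because poor vertices are pairwise far apart, a single Breaker move can bring at most one poor vertex into danger, so Maker simply responds to each such threat by claiming a saving edge there, and otherwise advances an unfinished poor vertex; since $|W|=n^{o(1)}$ and each degree-$4$ vertex needs only a bounded number of Maker moves, Maker completes Phase I after $n^{o(1)}$ moves, with the board — in particular the dense part $V\setminus W$ — essentially untouched. In Phase II Maker plays the Hamiltonicity game on $G_m$ proper, using the expansion from (iii) exactly as in the proof of Theorem \ref{Ham-K}: treating the paths secured in Phase I as pre-built pieces of his final graph, he first builds a spanning connected expander of $G_m$ in a linear number of moves (here (iii) guarantees that Breaker, claiming one edge per turn, can never block all the edges Maker needs to expand a frontier), and then uses boosters via P\'osa rotation--extension to turn it into a Hamilton cycle of $G_m$ passing through every secured path.

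The main obstacle is the interaction between the minimum-degree bottleneck and everything else: one must show that the \emph{only} obstruction to Maker's win vanishes exactly at the $\delta_4$ hitting time, which forces the structural estimates of the second step to be sharp enough to hold at that precise moment, and forces Maker's Phase I strategy to guarantee two edges at \emph{every} degree-$4$ vertex against a Breaker who moves first and may concentrate locally — this is where the ``poor vertices are far apart'' property does the essential work. Controlling the degree-exactly-$4$ vertices, and splicing their secured paths into the Hamilton cycle produced in Phase II, is the delicate part; everything else is an adaptation of known random-graph estimates and the positional-games techniques behind Theorem \ref{Ham-K}.
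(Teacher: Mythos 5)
The survey itself does not prove this theorem: it is quoted from \cite{BFHK12}, and the text only records the easy direction (at a vertex of degree at most $3$, Breaker, moving first, can leave Maker with at most one incident edge). So your sketch can only be measured against the original argument, whose overall architecture it does reproduce: minimum-degree obstruction for one inequality, pseudo-random structure of the process at the hitting time, a protection phase at the few small-degree vertices (your distance-$3$ property correctly guarantees one threat per Breaker move, and two out of four edges can indeed be secured), and then an expander-plus-boosters phase in the spirit of Theorem \ref{Ham-K}.

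As a proof, however, there are concrete gaps. Some of the structural facts you plan to quote are false at density $m\approx\frac{1}{2}n\ln n$: whp there exist vertices of intermediate degree (e.g.\ around $(\ln\ln n)^2$), so it is not true that every vertex outside $W$ has degree $\Theta(\ln n)$; and the assertion that any two disjoint sets of size $n/\ln^3 n$ are joined by an edge fails --- the smallest set size for which this holds whp at this density is of order $n\ln\ln n/\ln n$, and this quantitative form matters because such statements are exactly what one uses to connect expander pieces and to find boosters. More seriously, the expansion property must ultimately hold for \emph{Maker's} graph, not for $G_m$: at every degree-$4$ vertex Maker owns only the two protected edges, so ``$|N(S)\setminus S|\ge 3|S|$'' cannot be inherited, and those two edges are \emph{forced} edges of any Hamilton cycle of Maker's graph. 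Hence Phase II is not the Hamiltonicity game of Theorem \ref{Ham-K} (which is played on the complete board and has no forced edges); it is the problem of building, against Breaker, a connected expander relative to a prescribed linear forest and then claiming boosters compatible with it, on a board that is itself only marginally above the Hamiltonicity threshold. That adaptation, together with verifying the pseudo-random properties at the random stopping time $\tau(\tilde G,\delta_4)$ rather than for a fixed $G(n,m)$, is precisely the substance of \cite{BFHK12}; in your sketch it is flagged as ``the delicate part'' but not carried out, so the hard direction remains unproved as written.
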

Thus, typically in the random graph process, Maker starts winning the Hamiltonicity game {\em exactly} at the moment the last vertex of degree less than 4 disappears. From this result one can derive, in a rather standard way, the corresponding result for $G(n,p)$ --- the threshold probability for Maker's win stands at $p(n)=\frac{\ln n+3\ln\ln n}{n}$. The same paper \cite{BFHK12} gets hitting time results also for the perfect matching and the $k$-connectivity Maker-Breaker games. Biased Hamiltonicity games on $G(n,p)$ were considered in \cite{FGKN14}, where it was shown that for $p\gg\frac{\log n}{n}$, the threshold bias $b_{\mathcal{HAM}}$ satisfies typically $b_{\mathcal{HAM}}=(1+o(1))\frac{np}{\log n}$. This fits very well with the probabilistic intuition of Erd\H{o}s, as was predicted by Stojakovi\'c and Szab\'o \cite{SS05} who initiated the study of positional games on random graphs.

The $H$-game, where Maker wins if he creates a copy of a fixed graph $H$, was considered in its unbiased version for the case of random boards in \cite{SS05}, \cite{MS14}, \cite{NSS14}. In particular, M\"uller and Stojakovi\'c proved in \cite{MS14}:
\begin{theorem}\label{MS-th}
Let $k\ge 4$ be fixed. There exists a constant $c=c(k)>0$ such that for $p\le cn^{-\frac{2}{k+1}}$, a random graph $G\sim G(n,p)$ is with high probability such that Breaker wins the unbiased Maker-Breaker $K_k$-game on $G$.
\end{theorem}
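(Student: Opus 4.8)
The plan is to hand Breaker an explicit strategy and to show that, with high probability, $G\sim G(n,p)$ with $p\le cn^{-2/(k+1)}$ is a board on which that strategy succeeds. The value $n^{-2/(k+1)}$ is dictated by the Erd\H{o}s paradigm together with Theorem~\ref{BL}: since $m_2(K_k)=\tfrac{k+1}{2}$, the unbiased game on $G(n,p)$ ought to behave like the $(1:b)$ game on $K_n$ with $b$ of order $1/p$, and Breaker wins the latter exactly once $1/p$ exceeds a constant multiple of $n^{1/m_2(K_k)}=n^{2/(k+1)}$. A soft transfer of this heuristic would cost logarithmic factors, so to get the clean bound one must argue directly on the sparse board $G$.

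The heart of the matter is a structural lemma: for $c=c(k)$ small enough, the copies of $K_k$ in $G(n,p)$ are, with high probability, \emph{spread out} --- there is no large configuration of $K_k$-copies that is, in the limit, as dense as a single copy of $K_k$ (for instance, no long chain of copies glued along shared edges), so the ``overlap clusters'' of $K_k$-copies have controlled complexity. This is proved by a first-moment/union-bound estimate ranging over the possible isomorphism types of configurations; the exponent $2/(k+1)=1/m_2(K_k)$ is precisely the one at which the expected number of configurations which, as they grow, become denser than a single $K_k$ tends to $0$. The hypothesis $k\ge4$ enters here: for $k=3$ the configuration governing the game is not a clique, and the true threshold $n^{-5/9}$ exceeds $n^{-1/m_2(K_3)}=n^{-1/2}$.

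Granting the lemma, the board essentially decomposes. The ``share-an-edge'' relation splits the hypergraph of $K_k$-copies into well-separated clusters that live on pairwise disjoint edge sets, and on each cluster the $K_k$-game is an easy Breaker win when $k\ge4$: locally Breaker only needs to seize a tiny transversal --- for a single $K_{k+1}$ a matching of two edges already hits every $K_k$-subcopy --- and, with $\binom{k}{2}\ge6$ edges per copy, there is enough slack to grab it before Maker closes a clique. Breaker then plays all clusters in parallel, always responding inside whichever cluster Maker just touched; by the standard fact that a disjunction of Breaker-win games on disjoint boards is again a Breaker win, this defeats Maker on $G$. Finally one passes from the random-graph process to $G(n,p)$ in the routine way. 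The main obstacle is the structural lemma itself: organising the first-moment estimate uniformly over all configuration types and pinning down exactly which overlap patterns must be excluded, and then checking that excluding them genuinely yields the disjoint-cluster decomposition used in the last step. Everything after that --- the local Breaker strategies for $k\ge4$ and the parallel-play argument --- is standard.
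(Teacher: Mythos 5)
First, a point of reference: the survey does not prove this theorem at all --- it is quoted from M\"uller and Stojakovi\'c \cite{MS14} --- so your sketch must be measured against their argument. Your top-level architecture does match the spirit of that proof: study how the copies of $K_k$ in $G(n,p)$ overlap, decompose the clique hypergraph into edge-disjoint clusters, give Breaker a local second-player strategy in each cluster, and let him always answer in the cluster Maker just touched (the ``disjoint boards'' step at the end is indeed standard). The trouble is that the step you treat as the technical heart is based on a structural claim that is false, and the step you dismiss as ``standard'' is where the actual content of \cite{MS14}, and the hypothesis $k\ge 4$, really live.

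Concretely: at $p=cn^{-2/(k+1)}$ a first-moment/union bound can only exclude configurations $F$ whose maximum density $\max_{F'\subseteq F} e(F')/v(F')$ exceeds $\tfrac{k+1}{2}$; making $c$ small does not change the sign of the exponent of $n$. But configurations of $K_k$-copies that are ``as dense as a single copy'' are not of this kind. Gluing a new $K_k$ onto an existing edge adds $k-2$ vertices and $\binom{k}{2}-1=\tfrac{(k-2)(k+1)}{2}$ edges, which is exactly density-neutral at this $p$, so chains of copies glued along shared edges --- precisely the structures you claim are excluded --- appear with high probability, in polynomially many copies, and clusters can even have size growing with $n$. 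Likewise two $K_4$'s sharing a triangle (i.e.\ $K_5-e$, maximum density $9/5<5/2$) appear w.h.p.\ in the $K_4$-game range, even though their $2$-density exceeds $m_2(K_4)$: appearance is governed by $e/v$, not by $m_2$. So the board does \emph{not} decompose into tiny, well-separated clusters, and ``Breaker only needs a tiny transversal per cluster'' is not a routine observation: one must classify all overlap structures that can occur below the threshold and exhibit, for each, a winning second-player Breaker strategy (pairing-type arguments on trees of cliques, etc.). That classification-plus-local-strategy analysis is the main work in \cite{MS14}, and it is exactly where $k\ge4$ is used, since for $k=3$ one of the sparse configurations that does appear, $K_5-e$, is a Maker win. (Minor slip in the same breath: the triangle threshold $n^{-5/9}$ lies \emph{below} $n^{-1/2}$ --- Maker starts winning earlier than the $2$-density heuristic predicts, not later.) As it stands, then, your proposal identifies the right framework but the key lemma it rests on is not available, so the argument does not go through without the cluster classification it omits.
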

The matching result for starting Maker (if $p\ge Cn^{-\frac{2}{k+1}}$, then Maker typically wins the $K_k$-game on $G(n,p)$) can be derived from the general Ramsey-type result of R\"odl and Ruci\'nski \cite{RR95} and the strategy stealing argument, and it can be easily adapted for the case where Breaker starts. The case $k=3$ turns out to be different. There the threshold lies at $p=n^{-5/9}$ for a good local reason: Maker wins the triangle game on the graph $K_5-e$, hence Maker wins no later than this graph appears in $G(n,p)$ -- which typically happens at $p=n^{-5/9}$. The paper \cite{MS14} provides a hitting time version of this result.

Another sparsification-type approach asks for the minimal size of a game board, on which Maker can create a prescribed structure. For example, \cite{HKSS11} and \cite{BP12} proved that a graph $G$ on $n$ vertices on which Maker wins the positive minimum degree game has at least about $10n/7$ edges, and this estimate is tight. Gebauer, building partly on ideas from \cite{FK08}, showed:
\begin{theorem}\textnormal{\cite{Geb13}}\label{Geb-sizeR}
For every $d>0$ there exists $c=c(d)$ such that for every graph $H$ on $n$ vertices of maximum degree at most $d$, there is a graph $G$ with at most $cn$ edges such that Maker wins the unbiased $H$-game on $G$.
\end{theorem}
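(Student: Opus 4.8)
The natural plan is to let the board $G$ \emph{depend on $H$} (there is no linear-size board that simultaneously works for all bounded-degree graphs), taking $G$ to be a bounded \emph{thickening} of $H$: replace every vertex $v$ of $H$ by an independent set $B_v$ of $t=t(d)$ copies, and every edge $uv\in E(H)$ by the complete bipartite graph between $B_u$ and $B_v$ (a thriftier local gadget would do, but this already has the right order of magnitude). Then $|V(G)|=tn$, $G$ has maximum degree $td$, and $|E(G)|=t^2|E(H)|\le \tfrac12 t^2 d\,n$, which is $cn$ with $c=c(d)=\tfrac12 t(d)^2 d$. Maker's goal now reads: choose $\phi(v)\in B_v$ for every $v$ and, for each $uv\in E(H)$, claim the edge $\phi(u)\phi(v)$, so that his edges form a copy of $H$. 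The whole point of the thickening is redundancy: Maker has $t$ interchangeable choices for each image $\phi(v)$ and $t$ interchangeable choices for each connecting edge, and this is the slack he spends to absorb Breaker's interference. Since the bound is linear and disjoint boards can be played in parallel, one may also assume $H$ is connected, deferring to the end the (not entirely free) remark that the per-component strategies can be run simultaneously because each of them is \emph{fast} and robust to occasionally skipped moves.

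Next I would fix a suitable vertex order $v_1,\dots,v_n$ of $H$ (a breadth-first order, say) and have Maker build $\phi$ incrementally while maintaining an invariant: after $v_i$ has been \emph{settled}, every not-yet-settled vertex $w$ that has at least one settled neighbour carries a list $C_w\subseteq B_w$ of \emph{surviving candidates} with $|C_w|\ge \tau=\tau(d)$, and every $x\in C_w$ is already joined by Maker-edges to $\phi(u)$ for each already-settled neighbour $u$ of $w$. To settle $v_i$, Maker picks some $x\in C_{v_i}$ (an arbitrary vertex of $B_{v_i}$ if $v_i$ starts a new BFS component), sets $\phi(v_i)=x$, and then over the next few moves joins $x$ to many vertices of $B_w$ for each un-settled neighbour $w$ of $v_i$, thereby refreshing or creating the lists $C_w$. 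The key counting is that a single Breaker move touches edges incident to at most two blobs, hence removes at most one candidate from the pools of at most boundedly many (in $d$) active vertices; combined with $\Delta(H)\le d$, a potential/box-game style estimate shows that with $t$ and $\tau$ chosen large enough in terms of $d$ the invariant survives every round, so $C_{v_i}\neq\emptyset$ whenever it is needed.

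The step I expect to be the main obstacle is the presence of \emph{cycles} in $H$: a vertex $w$ may acquire settled neighbours $u,u'$ at different times, and then a candidate $x\in C_w$ must be joined by Maker-edges to \emph{both} $\phi(u)$ and $\phi(u')$, the first such edge having been claimed long before $\phi(u')$ was even chosen. This is precisely why a ``embed a tree into a Maker-built expander'' argument does not suffice and why the board must be tailored to $H$. I would handle it by two devices combined: first, $(d+1)$-edge-colour $H$ by Vizing's theorem and organise the processing so that the constraints introduced in any one stage form a matching, so no blob is asked to meet two fresh constraints at once; second, maintain for each active vertex not a single Maker-edge but $\Theta(\tau)$ Maker-edges into each relevant settled blob, so that when $\phi(u)$ is finally pinned down, enough candidates $x$ survive the event ``$x$ is joined to the one right vertex of $B_u$''. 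Quantitatively this forces $t$ to beat a $d$-fold iterated loss of a constant fraction of the candidate pool, so $t(d)$ comes out something like exponential in $d$ -- still a constant, which is all the theorem asks. Finally, to stop Breaker's damage from accumulating, Maker should clear each local ``connect $x$ into $B_w$'' subgame in $O(1)$ moves via a fast winning strategy of Feldheim--Krivelevich type (\cite{FK08}), so that settling one vertex costs $O(d)$ rounds, the game lasts $O(dn)$ moves on an essentially empty board, and the same fast/robust strategies are exactly what makes the parallel play over the components of $H$ legitimate; the remaining work -- checking that $\tau(d)$, $t(d)$ and the per-round losses are mutually consistent -- is the routine but fiddly part of the argument.
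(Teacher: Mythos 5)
The survey does not reproduce a proof of this theorem; it only cites \cite{Geb13} (which builds on ideas from \cite{FK08}), so I am judging your sketch on its own merits, and it has a concrete gap that is fatal as written. Your bookkeeping counts only Breaker moves that delete candidates from \emph{currently active} pools, and this misses exactly the moves that hurt: edges inside a cloud pair $(B_u,B_w)$ for an $H$-edge $uw$ both of whose endpoints are settled late. Your strategy (including the FK08-style ``clear the local subgame fast'' step) never claims an edge in such a pair until $u$ or $w$ is settled, i.e.\ until $\Theta(n)$ rounds have passed; but the pair contains only $t^2=O_d(1)$ edges, so Breaker can claim \emph{all} of them within his first $t^2$ moves, long before Maker (who needs at least $|E(H)|=\Theta(n)$ edges for any copy of $H$) could have finished. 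At that point the invariant $|C_w|\ge\tau$ with all candidates joined to all settled neighbours can no longer be restored, and no choice of $t(d),\tau(d)$, no Vizing-type scheduling, and no speed of the local subgames helps, because the damage is done in gadgets you have not yet begun to play in, while Breaker's total budget $\Theta(n)$ always dwarfs the constant size of a single cloud pair.

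Moreover, this is not just a flaw of the strategy: your board itself fails for some $H$. Take $H=C_n$ with $n$ odd (maximum degree $2$). Projecting each cloud of the blow-up to its vertex of $H$ maps any subgraph copy of $C_n$ to a homomorphic image, which is non-bipartite and connected, hence all of $C_n$; so every copy of $C_n$ in the blow-up uses exactly one vertex per cloud and one edge from \emph{every} adjacent cloud pair. Breaker therefore wins on your $G$ outright by spending his first $t^2$ moves occupying a single cloud pair. So either the board must be richer than a constant blow-up with complete bipartite connectors, or Maker must invest moves in every gadget from the very beginning, reacting locally to Breaker; but then you hit the genuine difficulty your sketch does not resolve: the surviving options in the up to $d$ gadgets meeting one cloud must still admit a simultaneous consistent choice of $\phi(v)$, and a majority/intersection argument fails already for $d\ge 2$ (a one-for-one local response only keeps Breaker's share near half of each gadget, not below the $t/d$-type thresholds you would need). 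Designing the gadgets and the global bookkeeping so that this cross-gadget consistency survives is precisely where the real work of \cite{Geb13} lies, and it is missing from the proposal.
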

These problems can be viewed as game versions of size Ramsey numbers. For a graph $H$, the {\em size Ramsey number} $\hat{r}(H)$ is the smallest $M$ for which there exists a graph $G$ with $M$ edges such that any red-blue coloring of the edges of $G$ produces a monochromatic copy of $H$ (we say also that $G$ arrows $H$). If $G$ arrows $H$, then by the strategy stealing argument (again!), Maker as the first player wins the $H$-game on $G$.  Hence size Ramsey numbers upper bound their game counterparts. The so obtained bounds are not always tight: for example, R\"odl and Szemer\'edi showed \cite{RS00} the existence of a graph $H$ on $n$ vertices of maximum degree 3 and size Ramsey number $\hat{r}(H)\ge cn\log^{\frac{1}{60}}n$, thus making it impossible to derive Theorem \ref{Geb-sizeR} from general size Ramsey results.

In many games, the identity of the winner is easy to establish, and one can then ask how long it takes him to win. For example, Lehman's Theorem \ref{Lehman_th} and its proof show that if a graph $G$ on $n$ vertices has two edge-disjoint spanning trees, then Maker, as the first or the second player, wins the connectivity game on $G$ in $n-1$ moves; this is clearly optimal. Define  the {\em move number} $\text{move}({\mathcal H})$ of a weak (resp. strong) game ${\mathcal H}$ as the smallest $t$ such that Maker (respectively {\tt FP}) has a strategy to win ${\mathcal H}$ within $t$ moves. If Breaker is the winner ({\tt SP} draws, resp.), we set $\text{move}({\mathcal H})=\infty$. The move number for  Avoider-Enforcer games is defined accordingly. Here is a sample of results about this concept. \cite{HKSS09-2} showed that for sufficiently large $n$, playing on the edges of $K_n$, Maker can create a Hamilton path in $n-1$ moves, and a perfect matching (assuming $n$ is even) in $n/2+1$ moves. Hefetz and Stich proved:
\begin{theorem}\textnormal{\cite{HS09}}\label{HS-Ham}
Let ${\mathcal HAM}$ denote the Maker-Breaker Hamiltonicity game played on the edges of $K_n$. Then $\textnormal{move}({\mathcal HAM})=n+1$, for all sufficiently large $n$.
\end{theorem}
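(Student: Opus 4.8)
The statement combines a lower bound $\textnormal{move}(\mathcal{HAM})\ge n+1$ and a matching upper bound $\textnormal{move}(\mathcal{HAM})\le n+1$, and I would treat them separately. For the lower bound, observe first that a Hamilton cycle has exactly $n$ edges, and that any graph with $n$ edges that contains a Hamilton cycle must equal that cycle; hence Maker cannot win before his $n$-th move, and if he wins on move exactly $n$ then his graph at that moment \emph{is} a Hamilton cycle. I claim Breaker can rule this out. Breaker plays arbitrarily through round $n-2$; after Maker's $(n-1)$-st move Maker's graph $M$ has $n-1$ edges. If $M$ is not a Hamilton path, then no single added edge turns it into a Hamilton cycle, so Maker does not win on move $n$. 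If $M$ is a Hamilton path with endpoints $u$ and $w$, then $uw$ is the unique edge whose addition closes $M$ into a Hamilton cycle, and it is now Breaker's turn: he claims $uw$ (if he has not already done so). In either case Maker has no Hamilton cycle after move $n$, so $\textnormal{move}(\mathcal{HAM})\ge n+1$.

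For the upper bound I would split Maker's play into a building phase and a closing phase. In the building phase Maker creates a Hamilton path in essentially the minimum number of moves, using the fast strategy behind the result of \cite{HKSS09-2} that Maker can complete a Hamilton path in $n-1$ moves; the crucial extra requirement is that Maker carry along, with one additional move, a single carefully placed chord $v_1v_j$ (one of whose endpoints is an endpoint of the path), and that he decide \emph{which} chord only at the latest possible moment. The point of the chord is that the graph ``Hamilton path $v_1\cdots v_n$ plus chord $v_1v_j$'' contains, besides the path itself, the second Hamilton path $v_{j-1}v_{j-2}\cdots v_1v_jv_{j+1}\cdots v_n$; more generally, P\'osa-type rotations performed inside Maker's own graph produce a whole family of Hamilton paths, and hence a large family of edges each of which would close one of them into a Hamilton cycle on Maker's $(n+1)$-st move.

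In the closing phase the numerical input is that, when Maker is poised to finish, Breaker owns only $O(n)$ edges in total, whereas the rotation family from the building phase supplies a set of potential closing edges that Breaker cannot have entirely occupied --- provided the family has been kept large enough and, importantly, provided its realised shape was not revealed to Breaker too early, so that Breaker's last move (the one immediately preceding Maker's winning move) cannot destroy the last surviving completion. Maker then claims such an edge and wins on move $n+1$, matching the lower bound.

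The main obstacle is precisely the tension at the heart of the closing phase: the move budget is so tight that Maker can afford essentially no slack beyond a Hamilton path plus one chord, yet he must still guarantee that enough completing edges remain free after Breaker's interference. Making this work requires exploiting the adaptivity of Maker's strategy --- committing to the final shape of his Hamilton cycle only on the last move --- together with a careful accounting of which edges Breaker could conceivably have claimed, and with a version of the fast Hamilton-path construction that is compatible with simultaneously securing the reserve chord. The standard Hamiltonicity machinery (rotation--extension, boosters) is designed for a comfortable surplus of moves, so its refinement to an exact, minimum-move analysis is the real work of the proof.
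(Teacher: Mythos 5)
The survey itself gives no proof of this theorem --- it is quoted from Hefetz and Stich \cite{HS09} --- so your attempt has to be measured against that original argument. Your lower bound is correct and is essentially the standard one: a win at move $n$ would force Maker's graph after his $(n-1)$-st move to be a Hamilton path, and Breaker, who moves in between, simply claims (or already owns) the unique closing edge, giving $\textnormal{move}(\mathcal{HAM})\ge n+1$.

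The upper bound, however, is where the entire content of the theorem lies, and there you have a plan rather than a proof. Two steps are missing. First, you ask the $(n-1)$-move Hamilton path strategy of \cite{HKSS09-2} to simultaneously carry a reserve chord at an endpoint and to keep the eventual shape of the path hidden from Breaker; it is not clear that the fast path-building strategy tolerates these extra constraints, and with a total budget of $n+1$ moves there is literally one spare edge, so any move spent repairing the path cannot also buy the chord. Second, and more fundamentally, exhibiting a rotation family of potential closing edges is not enough: by the time Maker is ``poised to finish,'' Breaker has had about $n$ moves of his own, so he may have pre-claimed every edge of any family whose location is determined (even implicitly) by Maker's visible graph. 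You acknowledge this (``provided the family has been kept large enough and \ldots not revealed too early''), but that proviso \emph{is} the difficulty of the theorem: what is needed is a concrete threat-counting or potential argument showing that immediately after Maker's $n$-th move at least two free completing edges survive, so that Breaker's single intervening move cannot destroy both. In \cite{HS09} this is achieved by a carefully designed strategy, with substantial case analysis, that maintains several completion options throughout the whole game rather than only at the end; your outline points in the right direction, but the decisive step is not carried out.
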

For the Maker-Breaker clique game $(K_n,K_q)$, the move number is known to be between $2^{\frac{q}{2}}$ \cite{Beck02} and $2^{\frac{2q}{3}}\text{poly}(q)$ \cite{Geb12}, for all large enough $n$. For Avoider-Enforcer games, Anuradha et al. \cite{AJSS08} proved that Avoider can stay planar for as long as $3n-26$ turns when playing against Enforcer in the unbiased game on $K_n$, a constant away from the trivial upper bound of $3n-6$.

Fast wins are closely related to strong games. Observe that if in a game ${\mathcal H}$ all winning sets have cardinality $n$, and in the weak game on ${\mathcal H}$ Maker has a strategy to win in $n$ moves, then in the strong game on ${\mathcal H}$ First Player wins in $n$ moves. Thus for example we can derive that {\tt FP} wins the Hamilton path game on $K_n$. This scenario is quite rare though. Still, in a recent exciting development, Ferber and Hefetz were able to use fast winning strategies in Maker-Breaker games to analyze much harder strong games.
\begin{theorem}\label{FH-strong}\textnormal{\cite{FH11,FH14}}
Let $\mathcal{PM}$, $\mathcal{HAM}$, $\mathcal{C}^k$ denote the perfect matching, Hamiltonicity and spanning $k$-connectivity games, resp., played on the edges of $K_n$. The strong versions of these games are First Player's win, for large enough $n$. Also, $\textnormal{move}(\mathcal{PM})\le \frac{n}{2}+2$, $\textnormal{move}(\mathcal{HAM})\le n+2$, and $\textnormal{move}(\mathcal{C}^k)=\left\lfloor\frac{kn}{2}\right\rfloor+1$ for $k\ge 3$.
\end{theorem}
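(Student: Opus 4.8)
The plan is to bootstrap First Player's strategy from \emph{fast} Maker-Breaker strategies for the same three targets, exploiting the one structural advantage First Player has: he moves first. (Strategy stealing, Theorem~\ref{SS}, is of no help here --- it only yields ``at least a draw'', and these games are not even hypergraph monotone.) Write $P$ for the target property --- a perfect matching, a Hamilton cycle, or spanning $k$-connectivity --- and let $e_P$ be the least number of edges of a graph on $[n]$ with property $P$, so $e_{\mathcal{PM}}=n/2$, $e_{\mathcal{HAM}}=n$, and $e_{\mathcal{C}^k}=\lceil kn/2\rceil$. The input facts are fast Maker-Breaker wins: Maker, as first player, can achieve $P$ in $e_P+O(1)$ moves --- for Hamiltonicity this is exactly $n+1$ by Theorem~\ref{HS-Ham}, with analogous statements for perfect matchings (an $(n/2+1)$-move strategy) and for $k$-connectivity. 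First Player runs such a fast Maker strategy, regarding Second Player as Breaker, and completes $P$ within some $t=e_P+O(1)$ of his own moves. Since by then Second Player has made only $t-1$ moves, First Player wins \emph{provided} Second Player's first $t-1$ edges do not already contain a copy of $P$.

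That proviso --- ``First Player wins the race'' --- is the entire difficulty, because the naive count fails: $t-1\ge e_P$, so a priori Second Player could finish on an earlier move. Two observations repair this. First, for Second Player to be even \emph{one move away} from $P$ he must hold a near-spanning substructure (a near-perfect matching, a spanning path, a spanning graph one vertex short of being $k$-connected), which costs him close to $e_P$ of his own edges \emph{and} near-flawless play; but First Player's fast construction keeps claiming edges that meet precisely such substructures, so these threats cannot materialize until the game is all but over. Second, on the rare occasion that a threat does appear, First Player suspends his construction for a single move to destroy it --- claiming the edge that would complete Second Player's copy of $P$, or one edge from each such completion --- and one argues that only $O(1)$, in fact at most one or two, such suspensions are ever forced, and that the detour edges either fold back into First Player's own structure or cost him only $O(1)$ additional moves. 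Assembling this gives the stated bounds $\textnormal{move}(\mathcal{PM})\le n/2+2$, $\textnormal{move}(\mathcal{HAM})\le n+2$ and $\textnormal{move}(\mathcal{C}^k)\le\lfloor kn/2\rfloor+1$; the matching lower bounds are easy, being immediate from $|A|=e_P$ for every winning set $A$, together with a short parity argument on degrees in the $k$-connectivity case when $kn$ is even.

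Concretely I would proceed game by game. For $\mathcal{PM}$: take the $(n/2+1)$-move Maker matching strategy, check that under it Second Player's first $n/2+1$ edges never form a perfect matching (he never assembles a near-perfect matching in time), and budget one corrective move for a last-second threat. For $\mathcal{HAM}$: start from the $(n+1)$-move strategy underlying Theorem~\ref{HS-Ham}; since Second Player cannot hold a spanning path before about his $n$-th move, at most one block is needed, yielding $n+2$. For $\mathcal{C}^k$ with $k\ge 3$: First Player builds a spanning $k$-connected subgraph by a structured fast strategy --- driving all degrees up to $k$ while maintaining high local connectivity, or packing roughly $k/2$ near-Hamiltonian structures --- needing essentially $\lceil kn/2\rceil$ edges, the single unavoidably ``wasted'' move accounting for the additive $1$, while Second Player is nowhere near $k$-connected by then. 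In every case I expect the genuinely hard part to be the simultaneous bookkeeping of the two players' graphs that shows First Player's own fast construction \emph{automatically} retards Second Player enough that only $O(1)$ dedicated blocking moves are ever required --- this is the heart of the Ferber--Hefetz argument and the reason the move numbers come out as $e_P+O(1)$ rather than anything larger.
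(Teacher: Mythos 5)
Your overall plan --- run a fast Maker strategy for the weak game and argue First Player completes his structure before Second Player can --- is exactly the route the paper indicates (the survey gives no proof of Theorem~\ref{FH-strong}; it only cites \cite{FH11,FH14} and remarks that the key is adapting fast Maker--Breaker strategies to strong games). But the step on which your whole sketch hinges is not established, and one of the two ``observations'' you use to settle the race is false as stated. You claim that First Player's fast construction ``keeps claiming edges that meet precisely such substructures, so these threats cannot materialize until the game is all but over.'' The two players' graphs are edge-disjoint but share all the vertices: if First Player takes the edge $(u,v)$ into his matching or cycle, nothing prevents Second Player from covering $u$ and $v$ by his own edges, so owning a fast-growing spanning structure does \emph{not} automatically retard the opponent. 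Since Maker's fast win takes $e_P+1$ or $e_P+2$ moves while Second Player could in principle finish at his own move $e_P$ (which falls before First Player's $(e_P+1)$st move), the race is genuinely lost by naive counting, and the entire content of the Ferber--Hefetz proofs is to design strategies in which First Player either forces Second Player to waste moves answering threats or can absorb a blocking move without losing tempo, and in which Second Player can never set up a \emph{double} threat (two completing edges at once), which a single suspended move could not parry. Your second observation --- ``only $O(1)$, in fact at most one or two, such suspensions are ever forced, and the detour edges fold back'' --- simply asserts this; it is the theorem, not a step towards it.

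Two smaller points. For $\mathcal{C}^k$ the claimed equality $\textnormal{move}(\mathcal{C}^k)=\lfloor kn/2\rfloor+1$ needs, when $kn$ is even, a Second Player strategy showing First Player cannot win in exactly $kn/2$ moves (i.e., cannot build a $k$-regular $k$-connected graph with zero wasted edges against adversarial play); a ``short parity argument on degrees'' does not do this --- the minimum-degree count only gives $\lceil kn/2\rceil$, which is the trivial bound you already have. And the existence of the fast weak-game strategies you take as input for $\mathcal{PM}$ and $\mathcal{C}^k$ (with the specific constants $n/2+1$ and $\lfloor kn/2\rfloor+1$, and with enough structural control to support the blocking analysis) is itself part of what \cite{FH11,FH14} prove; quoting Theorem~\ref{HS-Ham} covers only the Hamiltonicity count. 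So the proposal is a reasonable reconstruction of the strategy of the cited proofs, but the heart of the argument --- the simultaneous bookkeeping you yourself flag as ``the genuinely hard part'' --- is missing rather than sketched.
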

The key to all these proofs is analysis of fast strategies for the corresponding Maker-Breaker games and their adaptation for strong games. A natural outcome of this proof approach is explicit winning strategies for {\tt FP}. These results provide a new lease of life for the whole subject of strong games, notorious for its difficulty.

Discrepancy games can be viewed as a hybrid between Maker-Breaker and Avoider-Enforcer games. In such games, the first player, called Balancer, aims to end up with the correct proportion of elements in each winning set. For example, in the unbiased case Balancer strives to get about half of his elements (not much more, not much less) in every winning set. These games have been considered in \cite{AKSS05}, \cite{HKS07-2}, see also Chapters 16, 17
of \cite{Beckbook}. \cite{FKPS05} describes a strategy for Balancer to construct a pseudo-random graph; since pseudo-random graphs are known to have many nice features \cite{KS06}, this result guarantees Maker's win in a variety of games.

There are also games involving directed/oriented graphs, or graph orientation. For example, in the tournament game a tournament $T$ on $k$ vertices is given, and Maker and Breaker take turns in claiming free edges of $K_n$, one edge each, with Maker also orienting his edges. Maker wins the game if his graph contains in the end a copy of $T$. Clemens, Gebauer, and Liebenau proved \cite{CGL14} that for $k=(2-o(1))\log_2n$ Maker can create a copy of any given $k$-vertex tournament $T$. This is asymptotically optimal and resolves a problem posed by Beck in \cite{Beckbook}.

Finally, let us describe yet another quite interesting class of games, not very well studied as of yet. In these games, the players are Picker and Chooser. Picker in his $i$th turn picks two free elements $v_i,v_i'$ of the board $X$ and presents them to Chooser --- who chooses one of these elements, with the other one going to Picker. In the {\em Chooser-Picker} version of the game, Chooser wins if in the end he completes a winning set $A\in{\mathcal H}$, and Picker wins otherwise; in the {\em Picker-Chooser} version Picker wins if he occupies a winning set.
These games, especially the Chooser-Picker variant, appear to be similar to Maker-Breaker versions, and it was even conjectured that if Breaker wins the game on ${\mathcal H}$, then Picker, whose job appears to be even easier, wins the Chooser-Picker game on  ${\mathcal H}$; however, this was disproved in \cite{Knox12}. Still,  Bednarska-Bzd\c ega showed \cite{B13} that the Erd\H{o}s-Selfridge criterion (\ref{ES-crit}) is also a winning criterion for Picker in Chooser-Picker games. Beck in \cite{Beckbook} analyzed the Picker-Chooser clique game $(K_n,K_q)$ with an amazing degree of precision, proving in particular that Chooser starts winning at $q=(1-o(1))2\log_2n$. One can also consider the biased versions of both game types, where at each round Picker picks between $1+p$ and $p+q$ free elements of the board, Chooser keeps $p$ of them, and the rest goes to Picker. See \cite{Beckbook}, \cite{CMP09}, \cite{C10}, \cite{B13} for more results.

\section{Open problems and challenges}

In this section we present a representative sample of open problems in the field, and also indicate promising research directions.

For strong games, there are still many more problems than answers, though the situation is more hopeful now after the recent results stated in Theorem \ref{FH-strong}. One concrete example is the following question:
\begin{problem}
Show that for every positive $q$ there exist $t$ and $n_0$ such that for every $n\ge n_0$ First Player can win in at most $t$ moves the strong clique game $(K_n,K_q)$.
\end{problem}
In other words, we are essentially asking for an {\em explicit} winning strategy for {\tt FP} in the clique game. As we stated before, for $n\ge R(q,q)$ First Player wins due to strategy stealing, but this is highly inexplicit. The problem appears to be open even for the case $q=5$.

For weak (Maker-Breaker) games, we are in a much better shape. Still there are many nice problems to tackle. In the {\em degree game}, a graph $G$ of minimum degree $d$ is given, and Maker aims to create a graph of highest possible minimum degree. Since the edges will be split evenly between the players in the end, the best Maker can hope for is $d/2$. Getting around $d/4$ is fairly easy, here is a sketch. Assume for simplicity all degrees in $G$ are divisible by 4. Using an Eulerian orientation, one can split the edges of $G$ between its vertices so that a vertex $v$ gets a set $E_v$ of $d(v)/2$ incident edges assigned to it. Maker then plays a pairing game on the disjoint sets $E_v$, answering Breaker's move $e\in E_v$ by $e'\in E_v$, and thus guaranteeing a quarter degree at every vertex.   At present, $d/4$ is the best known result,  even improving it to $(1/4+\epsilon)d$ would be quite nice.   If we allow $d$ to depend on the number of vertices $n=|V(G)|$, then for $d\gg
 \log n$ we can get $(1/2-o(1))d$ in the degree game using known discrepancy results, say \cite{AKSS05}.

Here is a very cute problem due to Duffus, \L uczak, and R\"odl  \cite{DLR98}.
\begin{problem}
Prove that for integers $b\ge 1$ and $r\ge 3$, there exists $C=C(b,r)$ such that for every graph $G=(V,E)$ of chromatic number at least $C$, Maker has a strategy to create a graph $M$ of chromatic number at least $r$ when playing the $(1:b)$ biased Maker-Breaker game on $E(G)$.
\end{problem}
\noindent(Duffus et al.~\cite{DLR98} asked actually the equally interesting vertex version of this problem, where players claim vertices of $G$ rather than edges;  we state it in the edge version, more in line with the prevailing setting of Maker-Breaker games.) The unbiased case $b=1$ is easy -- one can take $C=(r-1)^2+1$, and the argument goes as follows. Recall first that for every edge decomposition $E(G)=E(M)\cup E(B)$ one has $\chi(G)\le \chi(M)\chi(B)$, where $\chi(G)$ denotes the chromatic number of $G$. Hence, if for a graph $G$ with $\chi(G)>(r-1)^2$  Breaker has a strategy to prevent Maker from reaching $\chi(M)\ge r$, then that very strategy guarantees Breaker a graph $B$ with $\chi(B)\ge r$. Maker, who is assumed to move first, can then steal this strategy and achieve $\chi(M)\ge r$. However, even the next case $b=2,r=3$ is open. Only very partial results are available, see \cite{AHK10,FGKLPVV13}. 

The {\em neighborhood conjecture} is one of the most important problems in positional games, with several ramifications in other combinatorial and computer science questions. Recall that the Erd\H{o}s-Selfridge  criterion (\ref{ES-crit}) guarantees that a $k$-uniform hypergraph ${\mathcal H}$ with less that $2^{k-1}$ edges is Breaker's win, and is thus 2-colorable by Proposition
\ref{2col}. As we mentioned, this is tight for games. The obvious drawback of this result is that it does not take into account the local structure of $\mathcal{H}$, hence taking any non-empty Breaker's win ${\mathcal H}$ and repeating it enough times will eventually create a hypergraph  violating the Erd\H{o}s-Selfridge condition (still Breaker's win of course). A local version of Breaker's winning criterion would be:
 \begin{problem}
Determine
$$
D(k) := \min\{d: \exists\textnormal{ $k$-uniform Maker's win ${\mathcal H}$ with }\Delta({\mathcal H})\le d\}\,,
$$
\end{problem}
\noindent where $\Delta({\mathcal H})$ is the maximum degree of ${\mathcal H}$. The definition of $D(k)$ implies in particular that any $k$-uniform  ${\mathcal H}$ with $\Delta({\mathcal H})< D(k)$ is 2-colorable. The relation to the famous Lov\'asz Local Lemma \cite{EL75} is apparent --- a standard application of the Local Lemma gives that if  $\Delta({\mathcal H})\le\frac{2^{k-1}}{ek}$, then ${\mathcal H}$ is 2-colorable. The gap between known lower and upper bounds for $D(k)$ is truly astonishing. For the lower bound, we know the trivial $D(k)\ge \frac{k}{2}+1$  (pairing strategy). Gebauer proved \cite{Geb12-2} that $D(k)\le (1+o(1))\frac{2^k}{k}$, thus disproving the original neighborhood conjecture, stated in Beck's book \cite{Beckbook}. This bound was improved further to $D(k)\le (1+o(1))\frac{2^k}{ek}$ by Gebauer, Szab\'o and Tardos \cite{GST11}. The same paper \cite{GST11} reveals exciting connections between the neighborhood conjecture, the Local Lemma, and the  satisfiabilty problem, most central in computer science.

Another problem about Maker-Breaker games due to Beck \cite{Beckbook} is as follows.
 \begin{problem}
For the $(m:m)$ Maker-Breaker clique game on $K_n$, what is the largest clique size $q$ Maker is guaranteed to achieve?
\end{problem}
For the unbiased case $m=1$ the answer is $q=(1-o(1))2\log_2n$, due to Beck \cite{Beckbook}. This matches the probabilistic intuition very well, as the clique number of the random graph $G(n,1/2)$ is  typically equal to $(1-o(1))2\log_2n$. If so, one can expect that for $m>1$ the answer should be similar. This is however not the case for $m\ge 6$, as shown in \cite{Geb12}.

For Avoider-Enforcer games, the current state of affairs does not quite match the situation for their Maker-Breaker analogs. In particular, we do not know yet to resolve the strict Hamiltonicity game:
 \begin{problem}
What are the threshold biases $b^-$ and $b^+$ for the $(1:b)$ Avoider-Enforcer Hamiltonicity game on $E(K_n)$, played under strict rules?
\end{problem}
The gap between the lower bound $b^-\ge (1-o(1))n/\ln n$ from Theorem \ref {AE-Ham} and the trivial upper bound $b^+\le n/2-1$ is quite annoying. It would be also very nice to develop a general theory of Avoider-Enforcer $H$-games, for fixed $H$; so far these games have mostly been attacked on a game-to-game basis.

Finally, we mention Chooser-Picker and Picker-Chooser games. They are largely an uncharted territory, and natural and attractive problems abound there.

\bigskip
\noindent{\bf Acknowledgement.} The author wishes to thank Asaf Ferber, Dan Hefetz, Milo\v{s} Stojakovi\'{c} and Tibor Szab\'o for careful reading of the manuscript and many helpful comments, and also for an extensive and fruitful cooperation in research on this fascinating subject.

\end{document}